\newcommand{\C}{{\mathbb C}}       
\newcommand{\R}{{\mathbb R}}       
\newcommand{\N}{{\mathbb N}}
\newcommand{\Z}{{\mathbb Z}}       
\newcommand{\DD}{{\mathcal D}}
\newcommand{\HH}{{\mathcal H}}
\newcommand{\diam}{{\rm diam}}
\newcommand{\dist}{{\rm dist}}
\newcommand{\rf}[1]{{(\ref{#1})}}
\newcommand{\supp}{\operatorname{supp}}
\newcommand{\vphi}{{\varphi}}
\newcommand{\ve}{{\varepsilon}}
\newcommand{\vv}{{\vspace{2mm}}}
\newcommand{\vvv}{{\vspace{3mm}}}
\newcommand{\wt}[1]{{\widetilde{#1}}}
\newcommand{\noi}{\noindent}
\def\Xint#1{\mathchoice
{\XXint\displaystyle\textstyle{#1}}%
{\XXint\textstyle\scriptstyle{#1}}%
{\XXint\scriptstyle\scriptscriptstyle{#1}}%
{\XXint\scriptscriptstyle\scriptscriptstyle{#1}}%
\!\int}
\def\XXint#1#2#3{{\setbox0=\hbox{$#1{#2#3}{\int}$ }
\vcenter{\hbox{$#2#3$ }}\kern-.58\wd0}}
\def\avint{\;\Xint-}
\definecolor{ffffff}{rgb}{1.0,1.0,1.0}
\definecolor{qqqqff}{rgb}{0.0,0.0,1.0}
\definecolor{ffqqqq}{rgb}{1.0,0.0,0.0}
\definecolor{zzzzqq}{rgb}{0.6,0.6,0.0}
\definecolor{marronet}{rgb}{0.6,0.2,0}
\definecolor{negre}{rgb}{0,0,0}
\definecolor{vermell}{rgb}{0.8,0.05,0.05}
\definecolor{blau}{rgb}{0.3,0.2,1.}
\definecolor{blauclar}{rgb}{0.,0.,1.}
\definecolor{grisfosc}{rgb}{0.25098039215686274,0.25098039215686274,0.25098039215686274}
\definecolor{verd}{rgb}{0.1,0.6,0.1}
\definecolor{taronja}{rgb}{0.9,0.6,0.05}
\definecolor{vermellclar}{rgb}{1.,0.,0.}
\definecolor{verdet}{rgb}{0,0.8,0.1}
\definecolor{blauverd}{rgb}{0,0.4,0.2}
\definecolor{grisclar}{rgb}{0.6274509803921569,0.6274509803921569,0.6274509803921569}
\newtheorem{theorem}{Theorem}[section]
\newtheorem{lemma}[theorem]{Lemma}
\newtheorem*{theorem*}{Theorem}
\theoremstyle{definition}
\newtheorem{example}[theorem]{Example}
\theoremstyle{remark}
\newtheorem{rem}[theorem]{Remark}
\numberwithin{equation}{section}
\newcommand{\brem}{\begin{rem}}
\newcommand{\erem}{\end{rem}}
\begin{document}

\title[Removable singularities for Lipschitz caloric functions]{Removable singularities for Lipschitz caloric functions in time varying domains}

\author{Joan Mateu}
\address{Joan Mateu\\
Departament de Matem\`atiques,  
\\
 Universitat Aut\`onoma de Barcelona and Centre de Recerca Matemàtica
 \\
08193 Bellaterra (Barcelona), Catalonia.
}
\email{mateu@mat.uab.cat}
\author{Laura Prat}
\address{Laura Prat\\
Departament de Matem\`atiques 
\\
Universitat Aut\`onoma de Barcelona and Centre de Recerca Matemàtica
\\
08193 Bellaterra (Barcelona), Catalonia.
}
\email{laurapb@mat.uab.cat
}

\author{Xavier Tolsa}

\address{Xavier Tolsa
\\
ICREA, Passeig Llu\'{\i}s Companys 23 08010 Barcelona, Catalonia\\
 Departament de Matem\`atiques 
\\
Universitat Aut\`onoma de Barcelona and Centre de Recerca Matemàtica
\\
08193 Bellaterra (Barcelona), Catalonia.
}
\email{xtolsa@mat.uab.cat}

\thanks{All authors were supported by 2017-SGR-0395 (Catalonia).  L.P. and X.T. were supported by MTM-2016-77635-P (MINECO, Spain) and J.M. was supported by MTM-2016-75390 (MINECO, Spain).}

\begin{abstract}
In this paper we study  removable singularities for regular $(1,1/2)$-Lipschitz solutions of the heat equation in time varying domains. We introduce an associated Lipschitz caloric capacity and we study its metric and geometric properties and the connection with the $L^2$ boundedness of the singular integral
whose kernel is given by the gradient of the fundamental solution of the heat equation.
\end{abstract}

\maketitle

\section{Introduction}

A compact set $E\subset\C$ is said to be removable for bounded analytic functions if for any open set $\Omega$ containing $E$, every bounded function analytic on $\Omega\setminus E$ has an analytic extension to $\Omega$. In \cite{Ahlfors}, Ahlfors showed that $E$ is removable for bounded analytic functions if and only if $E$ has zero analytic capacity. Analytic capacity is a notion that, in a sense, measures the size of a set as a non removable singularity. 
In the higher dimensional setting, one considers removable sets for Lipschitz harmonic functions: we say that a compact set $E\subset\R^{n+1}$ is removable for Lipschitz harmonic functions if, for each open set $\Omega\subset\R^{n+1}$, every Lipschitz function $f:\Omega\to\R$ that is harmonic in $\Omega\setminus E$ is harmonic in the whole $\Omega$. 
Nowadays,  very complete results are known for removable sets for bounded analytic functions in the plane (see \cite{Tolsa-llibre} for example) and also in the higher dimensional setting for removable sets for Lipschitz harmonic functions (see \cite{volbergsemiad}, \cite{NToV1}, \cite{NToV2}). 
 The Cauchy transform
and the Riesz transforms play a prominent role in their study.

In the present paper we study  removable singularities for regular $(1,1/2)$-Lipschitz solutions of the heat equation in time varying domains.  The parabolic theory in time varying domains is an area that has experienced a lot of activity
in the last years, with fundamental contributions by Hofmann, Lewis,
Murray, Nystr\"om, Silver, and Str\"omqvist \cite{Hofmann-Duke}, \cite{Hofmann-Lewis1}, 
\cite{Hofmann-Lewis2}, \cite{HLN1}, \cite{HLN2}, \cite{Lewis-Murray}, \cite{Lewis-Silver}, \cite{Nystrom-Stromqvist}.

Next we introduce some notation and definitions.
Our ambient space is $\R^{n+1}$ with a generic point denoted as $\bar x=(x,t)\in\R^{n+1}$, where $x\in\R^n$ and $t\in\R$.  We let $\Theta$ denote the heat operator, 
$\Theta=\Delta-\partial_t,$
where $\Delta=\Delta_x$ is the Laplacian with respect to the $x$ variable.
Then, for a smooth function $f$ depending on $(x,t)\in\R^{n+1}$,
$$\Theta(f) = \Delta f -\partial_t f=0$$
is just the heat equation.

Given $\bar x= (x,t)$ and $\bar y = (y,u)$, with $x,y\in\R^n$, $t,u\in\R$,
we consider the parabolic distance in $\R^{n+1}$ defined by
$$\dist_p(\bar x,\bar y) = \max\big(|x-y|, \,|t-u|^{1/2}\big).$$
Sometimes we also write
$|\bar x - \bar y|_p$ instead of $\dist_p(\bar x,\bar y)$.
We denote by $B_p(\bar x,r)$ a parabolic ball (i.e., in the distance $\dist_p$) centered at $\bar x$ with radius $r$. By a parabolic cube $Q$ of side length $\ell$, we mean a set of the form
$$I_1 \times\ldots\times I_n\times I_{n+1},$$
where $I_1,\ldots,I_n$ are intervals in $\R$ with length $\ell$, and $I_{n+1}$ is another interval with
length $\ell^2$. We write $\ell(Q)=\ell$. 

We say that a Borel measure $\mu$ in $\R^{n+1}$ has upper parabolic growth of degree $n+1$ if there
exists some constant $C$ such that
\begin{equation}\label{C=1}
\mu(B_p(\bar x,r))\leq C r^{n+1}\quad\mbox{ for all $\bar x\in\R^{n+1},\, r>0$.}
\end{equation}
Clearly, this is equivalent to saying that any parabolic cube $Q\subset\R^{n+1}$ satisfies $\mu(Q)\leq C'
\ell(Q)^{n+1}$. Given $E\subset\R^{n+1}$, we denote by $\Sigma(E)$ the family of (positive) Borel measures $\mu$ supported on $E$
which have upper parabolic growth of degree $n+1$ with constant $C=1$ in \rf{C=1}.

Throughout the paper $\|\cdot\|_{*,p}$ denotes the norm of the parabolic BMO space:
$$\|f\|_{*,p} = \sup_Q \avint_Q |f -m_Q f|\,dm,$$
where the supremum is taken over all parabolic cubes $Q\subset \R^{n+1}$,
$dm$ stands for the Lebesgue measure in $\R^{n+1}$ and $m_Qf$ is the mean of $f$ with respect to $dm$. 
For a function $f:\R^{n+1}\to\R$, we set 
$$
\partial_t^{1/2}f(x,t)=\int\frac{f(x,s)-f(x,t)}{|s-t|^{3/2}}ds.
$$
We say that a compact set $E\subset \R^{n+1}$ is Lipschitz removable for the heat equation (or Lipschitz caloric removable) if for any open set $\Omega\subset\R^{n+1}$, any function $f:\R^{n+1}\to\R$
such that
\begin{equation}\label{lip11/2}
\|\nabla_x f\|_{L^\infty(\Omega)}<\infty ,\qquad \|\partial_t^{1/2} f\|_{*,\Omega,p} <\infty
\end{equation}
satisfying the heat equation in $\Omega\setminus E$, also satisfies the heat equation in the whole $\Omega$.  
Functions satisfying \rf{lip11/2} are called regular $(1,1/2)$-Lipschitz in the literature (see
\cite{Nystrom-Stromqvist}, for example). So perhaps it would be
more precise to talk about regular $(1,1/2)$-Lipschitz removability. However, we have preferred the simpler terminology of Lipschitz removability for shortness.

Our motivation to study the singularities for regular $(1,1/2)$-Lipschitz functions, with the
parabolic BMO condition in the half derivative with respect to time, comes from the results in 
\cite{Hofmann-Duke}, \cite{Hofmann-Lewis1},  \cite{Lewis-Murray}, and \cite{Lewis-Silver}. In these
works in connections with parabolic singular integrals and caloric layer potential on graphs,
 it has become clear 
 that the right graphs are the ones of functions that are Lipschitz in the space variable and have 
 half time derivative in parabolic BMO. The results that we obtain in this paper (like the ones
 about localization of singularities that we describe below) also confirm that the parabolic BMO condition
 on the half time derivative is a natural assumption.

Given a set $E\subset \R^{n+1}$, we define its Lipschitz caloric capacity by
\begin{equation}\label{def1}
\gamma_\Theta(E) =\sup\{|\langle\nu,1\rangle|:  \nu\in\mathcal D', \,\supp\nu\subset E,\|\nabla_x W*\nu\|_{L^\infty(\R^{n+1})} \leq 1\mbox{ and }\|\partial_t^{1/2} W*\nu\|_{*,p} \leq 1\},
\end{equation}
where  $\mathcal D'$ is the space of distributions in $\R^{n+1}$ and
$W(x,t)$ denotes the fundamental solution of the heat equation in  $\R^{n+1}$, that  is 
$$W(x,t) =\left\{\begin{array}{l} \frac{1}{(4\pi t)^{n/2}}\,e^{-|x|^2/(4t)}\quad \mbox{if }\; t>0\\\\\quad0\hspace{2.65cm}\mbox{if }\;t\leq 0\end{array}\right..$$

We shall now give a brief description of the main results in the paper. 
In Section \ref{section-localization} we deal with a localization result. More concretely, for a distribution $\nu$, we localize the potentials  $\nabla W*\nu$ and $\partial_t^{1/2}W*\nu$ in the $L^\infty-$norm and the parabolic BMO norm respectively. The localization method for the Cauchy potential $\nu*1/z$ in the plane is a basic tool 
 developed by A.G. Vitushkin in the theory of rational approximation in the plane. This was later adapted in \cite{paramonov} for the Riesz potential $\nu*x/|x|^{n}$ in $\R^n$ and used in problems of ${C}^1-$harmonic approximation. These localization results have also been essential to prove the semiadditivity of analytic capacity and of Lipschitz harmonic capacity, see \cite{tolsasemiad} and \cite{volbergsemiad} respectively (see  also \cite{tams} for other related capacities).
 In Section \ref{section-positive} we restrict ourselves to the case when the distribution $\nu$
 in \rf{def1} is a positive measure $\mu$. We show that if $\mu$ has upper parabolic growth of degree $n+1$ and $\nabla_xW*\mu$ is in $L^\infty(\R^{n+1})$, then  $\partial_t^{1/2}W*\mu$ is bounded in the parabolic BMO-norm. This fact will be very useful when studying the capacity $\gamma_{\Theta,+}$, whose definition is 
 analogous to the one in \eqref{def1} but with the supremum restricted to positive measures. 
 
 In Section \ref{section-capacities} we study the connection between Lipschitz caloric removability and the capacity $\gamma_\Theta$.  In particular, we show that
 a compact set $E\subset\R^{n+1}$ is Lipschitz caloric removable if and only if $\gamma_\Theta(E)=0$.
  We also compare the capacity $\gamma_{\Theta}$ to the parabolic Hausdorff content $\mathcal{H}_{\infty,p}^{n+1}$ and we prove that if $E$ has zero $(n+1)-$dimensional parabolic Hausdorff measure, i.e., $\HH^{n+1}_p(E)=0$, then  $\gamma_{\Theta}(E)=0$ too. In the converse direction, we show that if $E$ has parabolic Hausdorff dimension larger than $n+1$, then $\gamma_{\Theta}(E)$ is positive. Hence, the critical parabolic dimension for Lipschitz caloric capacity 
 (and thus for Lipschitz caloric removability)
 occurs in dimension $n+1$, in accordance with the classical case. We remark here that the parabolic Hausdorff measure $\HH^{n+1}_p,$ the parabolic Hausdorff content $\mathcal{H}_{\infty,p}^{n+1},$ and the parabolic Hausdorff dimension are defined as in the Euclidean case 
 (see \cite{Mattila-gmt}, for instance), just replacing the Euclidean distance by the parabolic distance introduced above. Then it turns out
 that $\R^{n+1}$ has parabolic Hausdorff dimension $n+2$.
  
In Section \ref{section-capacities} we also introduce
 a new capacity $\wt\gamma_{\Theta,+}$. We consider the convolution operator $T$ with kernel $K = 
 \nabla_x W$, which is of Calder\'on-Zygmund type in the parabolic space. We denote by $T^*$ its dual operator. Then we set
$\wt\gamma_{\Theta,+}(E)=\sup\mu(E),$
where the supremum is taken over all positive measures measures $\mu\in\Sigma(E)$ such that
\begin{equation*}
\|T\mu\|_{L^\infty(\R^{n+1})} \leq 1,\qquad \|T^*\mu\|_{L^\infty(\R^{n+1})} \leq 1.
\end{equation*}
 We show that the capacity $\wt\gamma_{\Theta,+}$ can be characterized in terms of the $L^2$-norm of $T$
 and that  $\gamma_\Theta\gtrsim\wt\gamma_{\Theta,+}$. 
   Then we show that any subset of positive measure $\HH^{n+1}_p$ of a
regular Lip$(1,1/2)$ graph has positive capacity $\wt\gamma_{\Theta,+}$ and is non-removable. In particular, any subset of positive measure $\HH^{n+1}_p$ of 
a non-horizontal hyperplane (i.e., not parallel to $\R^n\times\{0\}$) is non-removable. Let us remark that any horizontal plane has parabolic Hausdorff dimension $n$, and thus any subset of a horizontal plane is removable.

In the last section of the paper we construct a self-similar Cantor set $E\subset \R^3$ with positive and finite measure $\HH^3_p$ and we show that this is Lipschitz removable for the heat equation. The construction extends easily to $\R^{n+1}$, with $n\geq 1$ arbitrary, but we work in $\R^3$ for simplicity. Our example is inspired by the typical planar $1/4$ Cantor
set in the setting of analytic capacity (see \cite[p.\ 35]{Tolsa-llibre}, for example).

By analogy with what happens with analytic capacity \cite{David-vitus} or Lipschitz harmonic capacity \cite{NToV2} and because of the examples of regular Lip$(1,1/2)$ graphs and the Cantor set mentioned above, 
one
should expect that a set $E\subset \R^{n+1}$ is Lipschitz caloric removable if and only if it is parabolic purely $(n+1)$-unrectifiable in some sense. Remark that it seems natural to define that set $E$ as 
parabolic purely $(n+1)$-unrectifiable if it intersects any regular Lip$(1,1/2)$ graph at most in a 
set of measure $\HH^{n+1}_p$ zero (see \cite{Nystrom-Stromqvist} for some results on parabolic uniform rectifiability).
 A first step in this direction might consist in proving that
$\gamma_\Theta(E)>0$ if and only if $\wt\gamma_{\Theta,+}(E)>0$ (or even that both capacities are comparable). However, there is a big obstacle when trying to follow this approach. Namely, the kernel
$K=\nabla_x W$ is not antisymmetric and thus, if $\nu$ is such that $T\nu = \nabla_x W*\nu$ is in $L^\infty(\R^{n+1})$,
apparently one cannot get any useful information regarding $T^*\nu$. This prevents any direct application 
of the usual $T1$ or $Tb$ theorems from Calder\'on-Zygmund theory, which are  essential tools in the
case of analytic capacity or Lipschitz harmonic capacity. A connected question is the following:
is it true that a set is removable for the heat equation if and only if it is removable for the adjoint heat equation $\Delta f +\partial_t f=0$?

\vv
Some comments about the notation used in the paper: as usual, the letter $C$ stands for an absolute constant which may change its value at different occurrences.
The notation $A\lesssim B$ means that there is a positive absolute constant $C$ such that $A\leq CB$. Also, $A\approx B$ is equivalent to $A\lesssim B\lesssim A$.
\vv

We would like to thank the anonymous referees for the careful reading of the paper and useful suggestions.

\vv

\section{Some  preliminary estimates}\label{one}

In the next lemma we will obtain upper bounds for the kernels $W(x,t), \nabla_xW(x,t), \partial_tW(x,t)$ and $\partial_t^{1/2}W(x,t).$

\begin{lemma}\label{estimates}
For any $\bar x= (x,t), \, x\in \R^n$ and $t\in \mathbb R,$ the following holds:
$$0\leq W(\bar x)\lesssim \frac1{|\bar x|_p^{n}},$$
$$|\nabla_x W(\bar x)|\lesssim \frac1{|\bar x|_p^{n+1}},$$
$$|\partial_t^{1/2} W(\bar x)|\lesssim \frac1{|x|^{n-1}\,|\bar x|_p^2},$$
$$|\partial_t W(\bar x)|\lesssim \frac1{|\bar x|_p^{n+2}}.$$
\end{lemma}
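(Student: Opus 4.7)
The plan is to prove all four bounds by direct computation from the explicit formula
\[
W(x,t) = \frac{1}{(4\pi t)^{n/2}}\,e^{-|x|^2/(4t)}\,\chi_{\{t>0\}},
\]
combined with the elementary fact that $s^k e^{-s}$ is bounded on $[0,\infty)$ for every fixed $k\geq 0$. The estimates naturally split into the two regimes $|x|\leq t^{1/2}$ (where $|\bar x|_p=t^{1/2}$) and $|x|>t^{1/2}$ (where $|\bar x|_p=|x|$); for bounds (1), (2), (4) it suffices to consider $t>0$, since otherwise $W$ and all its local derivatives vanish.

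First I would handle (1), (2), and (4) in parallel. Setting $s=|x|^2/(4t)$ one rewrites $W=\pi^{-n/2}s^{n/2}|x|^{-n}e^{-s}$; since $s^{n/2}e^{-s}$ is uniformly bounded, bound (1) on $|x|>t^{1/2}$ is immediate, while for $|x|\leq t^{1/2}$ one just uses $W\leq (4\pi t)^{-n/2}$. For (2), the identity $\nabla_x W=-\frac{x}{2t}W$ produces an extra factor $|x|/t=4s/|x|$, giving $|\nabla_x W|\lesssim s^{n/2+1}e^{-s}|x|^{-(n+1)}$ in the far regime; in the near regime one bounds $|x|/(2t)\leq \frac{1}{2}t^{-1/2}$ and combines with (1). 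Bound (4) is analogous, using either $\partial_t W=\Delta_x W$ or the direct formula $\partial_t W=W(|x|^2-2nt)/(4t^2)$; the factor $|x|^2/t^2=16s^2/|x|^2$ in the far regime is again absorbed into $s^{n/2+2}e^{-s}$.

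The main obstacle is (3), since $\partial_t^{1/2}$ is a nonlocal operator in time, and the target bound $1/(|x|^{n-1}|\bar x|_p^2)$ even differs in form from the others. For $x\neq 0$ the plan is to exploit the scaling
\[
W(x,t)=|x|^{-n}G(t/|x|^2),\qquad G(u)=(4\pi u)^{-n/2}e^{-1/(4u)}\chi_{\{u>0\}},
\]
which, after the change of variables $s=|x|^2 v$ in the defining integral, yields
\[
\partial_t^{1/2}W(x,t)=\frac{1}{|x|^{n+1}}\,H\!\left(\frac{t}{|x|^{2}}\right),\qquad H(u_0)=\int\frac{G(v)-G(u_0)}{|v-u_0|^{3/2}}\,dv.
\]
Writing $u_0=t/|x|^2$, the target estimate becomes $|H(u_0)|\lesssim \min(1,1/|u_0|)$: the first branch handles $|x|\geq |t|^{1/2}$ (so $|u_0|\leq 1$), the second handles $|t|^{1/2}>|x|$ (so $|u_0|>1$).

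The remaining work is to verify these two bounds on the one-variable function $H$. For $|u_0|\leq 1$, the super-exponential decay of $G$ as $v\to 0^+$ and its $v^{-n/2}$ decay as $v\to\infty$, together with the local regularity near $v=u_0$ (where the integrand is $O(|v-u_0|^{-1/2})$ and thus absolutely integrable), give $|H(u_0)|\lesssim 1$. For $|u_0|>1$ one splits $H(u_0)$ into the near region $\{|v-u_0|\leq |u_0|/2\}$ and the far region $\{|v-u_0|>|u_0|/2\}$; in the near region one uses that $G$ is essentially Lipschitz on that scale (with derivative controlled by $|u_0|^{-n/2-1}$) so that the cancellation with $G(u_0)$ gains a factor of $|u_0|^{-1}$, while in the far region one applies $|v-u_0|^{-3/2}\lesssim |u_0|^{-3/2}$ together with $\int G \, dv$ controlled appropriately (with extra care for $n=1,2$, where $\int G$ diverges and one instead uses that $G(v)\lesssim v^{-n/2}$ on the tail). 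The hardest sub-case is precisely this low-dimensional analysis for large $|u_0|$, but each piece contributes at most $O(1/|u_0|)$, closing the estimate.
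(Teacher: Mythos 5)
Your proposal is correct and follows essentially the same strategy as the paper: for (1), (2), (4) you bound powers of $s$ against $e^{-s}$ in the two regimes $|x|\lessgtr t^{1/2}$, and for (3) you use the parabolic scaling $W(x,t)=|x|^{-n}G(t/|x|^2)$ to reduce the fractional time derivative to the one-variable bound $|H(u_0)|\lesssim\min(1,|u_0|^{-1})$, exactly as the paper does with its function $f(4t/|x|^2)$. The only cosmetic difference is your near/far split of the $H$-integral by $|v-u_0|\lessgtr|u_0|/2$ versus the paper's three-region split by $|s|$ relative to $|t|$ (with a separate sign-dependent sub-case), but both lead to the same contributions and you correctly flag that the low-dimensional tails $n=1,2$ need the extra care that makes the far-region piece come out to $O(|u_0|^{-1})$.
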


\begin{proof}
To prove the first inequality we use the fact that $e^{-|y|}\lesssim \min(1,|y|^{-n/2})$, and then we get
$$W(\bar x)\lesssim \frac1{t^{n/2}}\,\min\bigg(1,\frac{t^{n/2}}{|x|^n}\bigg) = \frac1{\max(t^{n/2},|x|^n)}
= \frac1{|\bar x|_p^{n}}.$$
Concerning the second estimate in the lemma, we have
$$\nabla_x W(x,t) = c \frac{x}{t^{n/2+1}}\,e^{-|x|^2/(4t)}\,\chi_{\{t>0\}}.$$
So using now that
$e^{-|y|}\lesssim \min(|y|^{-1/2},|y|^{-1-n/2})$, we derive
$$|\nabla_x W(\bar x)| \lesssim c\frac{|x|}{t^{n/2+1}} \,\min\bigg(\frac{t^{1/2}}{|x|},\frac{t^{n/2+1}}{|x|^{n+2}}\bigg) = \frac1{\max(t^{(n+1)/2},|x|^{n+1})}
= \frac1{|\bar x|_p^{n+1}}.$$
For the last inequality, we compute
$$\partial_t W(\bar x) = \bigg(\frac{c_1}{t^{n/2+1}} \,e^{-|x|^2/(4t)} + \frac{c_2|x|^2}{t^{n/2+2}}\,e^{-|x|^2/(4t)}\bigg)\,\chi_{\{t>0\}},$$
and then we argue as above. We leave the details for the reader.

The proof of the third inequality will take some more work. Clearly, we may assume $x\neq0$.
First we write $W(x,t)$ in the form
$$W(x,t) = \frac{c}{|x|^n}\,\bigg(\frac{|x|^2}{4t}\bigg)^{n/2}\,e^{-|x|^2/(4t)} \,\chi_{\{t>0\}}= \frac{c}{|x|^n}\,f\bigg(\frac{4t}{|x|^2}\bigg),$$
where 
$$f(s) = \frac1{s^{n/2}}\,e^{-1/s}\,\chi_{\{s>0\}}.$$
Notice that $f$ is a $C^{\infty}$ function that vanishes at $\infty$.
Then we have
$$\partial_t^{1/2}W(x,t) = \frac{c}{|x|^n}\,\partial_t^{1/2}\bigg[f\bigg(\frac{4\;\cdot}{|x|^2}\bigg)\bigg](t).$$
By a change of variable, it is immediate to check that
$$\partial_t^{1/2}\bigg[f\bigg(\frac{4\;\cdot}{|x|^2}\bigg)\bigg](t)= \frac2{|x|}\,
\partial_t^{1/2} f\bigg(\frac{4t}{|x|^2}\bigg),
$$
and thus
$$\partial_t^{1/2}W(x,t) = \frac{c}{|x|^{n+1}}\,\partial_t^{1/2}f\bigg(\frac{4t}{|x|^2}\bigg).$$
We will show below that, for any $t\in\R$,
\begin{equation}\label{eqdt1}
|\partial_t^{1/2}f(t)| \lesssim \min(1,|t|^{-1}).
\end{equation}
Clearly, this implies that
$$|\partial_t^{1/2}W(x,t)|\lesssim \frac{1}{|x|^{n+1}}\,\min\bigg(1,\frac{|x|^2}{t}\bigg)
= \frac1{\max(|x|^{n+1}, |x|^{n-1}t)} = \frac1{|x|^{n-1}\,|\bar x|_p^2}.$$

The proof of \rf{eqdt1} is a straightforward but lengthy calculation. We split the integral as follows:
\begin{align*}
|\partial_t^{1/2}f(t)| &\leq \int_{|s|\leq |t|/2}\!\! \frac{|f(s) - f(t)|}{|s-t|^{3/2}}\,ds 
+ \int_{|t|/2<|s|\leq 2|t|} \!\! \frac{|f(s) - f(t)|}{|s-t|^{3/2}}\,ds
+ \int_{|s|> 2|t|} \!\! \frac{|f(s) - f(t)|}{|s-t|^{3/2}}\,ds\\
& = I_1 + I_2 + I_3.
\end{align*}
To estimate $I_1$ we use that $|s-t|\approx|t|$ in its domain of integration, and then we get
\begin{equation}\label{eqI1*}
I_1\lesssim \frac1{|t|^{3/2}}\int_{|s|\leq|t|/2} \frac1{|s|^{n/2}}\,e^{-1/|s|}\,ds
+ \frac1{|t|^{3/2}}\int_{|s|\leq|t|/2} \frac1{|t|^{n/2}}\,e^{-1/|t|}\,ds.
\end{equation}
The second summand equals
$$\frac C{|t|^{3/2}} \frac1{|t|^{n/2-1}}\,e^{-1/|t|} =  \frac C{|t|^{(n+1)/2}}\,e^{-1/|t|}.$$
 The integral in the first summand  of \rf{eqI1*} can be estimated as follows:
\begin{align*}
\int_{|s|\leq|t|/2} \frac1{|s|^{n/2}}\,e^{-1/|s|}\,ds & \leq e^{-1/(2|t|)}\int_{|s|\leq 1} \frac1{|s|^{n/2}}\,e^{-1/(2|s|)}\,ds + e^{-1/(2|t|)} \int_{1\leq |s|\leq|t|/2} \frac1{|s|^{n/2}}\,ds\\
& \lesssim e^{-1/(2|t|)} (1+ |t|^{1/2}).
\end{align*}
Hence,
$$I_1\lesssim \frac1{|t|^{3/2}}e^{-1/(2|t|)} (1+ |t|^{1/2}) +
 \frac1{|t|^{(n+1)/2}}\,e^{-1/|t|} \lesssim \min\bigg(1,\frac1{|t|}\bigg).$$

To deal with $I_2$, we distinguish two cases, according to wheter $s$ has the same sign as $t$ or not. In the first case we write $s\in Y$, and in the second one, $s\in N$. In the case $s\in N$, with 
$|t|/2\leq|s|\leq 2|t|$, it turns out that $|s-t|\approx |t|$, and thus
\begin{align*}
I_{2,N} & :=\int_{s\in N,|t|/2\leq|s|\leq 2|t|}  \frac{|f(s) - f(t)|}{|s-t|^{3/2}}\,ds\\
&\lesssim \frac1{|t|^{3/2}}\int_{|s|\leq 2|t|} \frac1{|s|^{n/2}}\,e^{-1/|s|}\,ds
+ \frac1{|t|^{3/2}}\int_{|s|\leq2|t|} \frac1{|t|^{n/2}}\,e^{-1/|t|}\,ds.
\end{align*}
Observe that this last expression is very similar to the right hand side of \rf{eqI1*}.
Then, by almost the same arguments we deduce that
$$I_{2,N}\lesssim \min\bigg(1,\frac1{|t|}\bigg).$$
To deal with the case when the sign of $s$ is the same as the one of $t$ (i.e., $s\in Y$),
we take into account that
$$|f(s)-f(t)|\leq \sup_{\xi\in[s,t]}|f'(\xi)|\,|s-t|,$$
Since in this case $|t|/2\leq|\xi|\leq 2|t|$, it is immediate to check that for this $\xi$ we  have
$$|f'(\xi)|\lesssim \frac1{|t|^{n/2+1}}\,e^{-1/(4|t|)}.$$
Thus,
\begin{align*}
I_{2,Y} & :=\int_{s\in Y,|t|/2\leq|s|\leq 2|t|}  \frac{|f(s) - f(t)|}{|s-t|^{3/2}}\,ds\lesssim \frac1{|t|^{n/2+1}}\,e^{-1/(4|t|)} \int_{|t|/2\leq|s|\leq 2|t|} \frac{|s-t|}{|s-t|^{3/2}}\,ds\\
&\leq \frac1{|t|^{n/2+1}}\,e^{-1/(4|t|)} \int_{|s|\leq 2|t|} \frac1{|s-t|^{1/2}}\,ds\lesssim
\frac1{|t|^{(n+1)/2}}\,e^{-1/(4|t|)}\lesssim \min\bigg(1,\frac1{|t|}\bigg).
\end{align*}

Finally, concerning $I_3$, taking into account that $|s-t|\approx|s|\gtrsim|t|$ in the domain of integration,
$$I_3\lesssim \int_{|s|> 2|t|}   \frac{e^{-1/|s|} + e^{-1/|t|}}{|t|^{n/2}|s|^{3/2}} \,ds\lesssim
\int_{|s|> 2|t|}   \frac{e^{-1/|s|}}{|t|^{n/2}|s|^{3/2}} \,ds +  \frac{e^{-1/|t|}}{|t|^{n/2}} \int_{|s|> 2|t|}   \frac1{|s|^{3/2}} \,ds.$$
It is immediate to check that none of the two summands exceeds $C\min(1,|t|^{-1})$.
So gathering all the estimates above, the claim \rf{eqdt1} follows.
\end{proof}


\vv

\section{Localization}\label{section-localization}

Let $\vphi:\R^{n+1}\to\R$ be a $C^2$ function. We say that $\vphi$ is admissible for a parabolic cube $Q$ 
if it is supported on $Q$ and satisfies
\begin{equation}\label{admis}
\|\nabla_x \vphi\|_\infty \leq \frac1{\ell(Q)},\qquad \|\Delta\vphi\|_\infty + \|\partial_t\vphi\|_\infty\leq \frac1{\ell(Q)^2}.
\end{equation}
The main objective of this section is to show the following localization result.

\begin{theorem}\label{teoloc}
Let 
 $\nu$ be a distribution in $\R^{n+1}$ such that 
$$\|\nabla_x W*\nu\|_\infty \leq 1,\qquad \|\partial_t^{1/2} W*\nu\|_{*,p} \leq 1.$$
Let $\vphi$ be a $C^2$ function admissible for a parabolic cube $Q\subset\R^{n+1}$. Then
$$\|\nabla_x W*(\vphi\nu)\|_\infty \lesssim 1,\qquad \|\partial_t^{1/2} W*(\vphi\nu)\|_{*,p} \lesssim 1.
$$
\end{theorem}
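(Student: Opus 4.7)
The plan is to use a Vitushkin-type localization identity adapted to the heat operator. Setting $f := W*\nu$, I would first derive the key identity
\begin{equation*}
W*(\vphi\nu) = \vphi(f-c) - 2\,W*(\nabla_x\vphi \cdot \nabla_x f) - W*((f-c)\,\Theta\vphi),
\end{equation*}
valid for any constant $c \in \R$. This follows by expanding
$\Theta(\vphi f) = \vphi\,\Theta f + 2\,\nabla_x\vphi \cdot \nabla_x f + f\,\Theta\vphi = \vphi\nu + 2\,\nabla_x\vphi \cdot \nabla_x f + f\,\Theta\vphi$,
convolving with $W$, using $W*\Theta\psi = \psi$ applied to $\psi = \vphi f$ (which follows from $\Theta W = \delta$ and integration by parts, since $\vphi f$ has compact support), and finally noting that $\int \Theta\vphi\,dm = 0$, which permits the free subtraction of $c$.

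For the $L^\infty$ estimate of $\nabla_x W*(\vphi\nu)(\bar x)$, I would fix $\bar x$ and choose $c := f(\bar x)$. Differentiating in $x$ and evaluating at $\bar y = \bar x$, the contribution of $(\nabla_x\vphi)(f-c)$ vanishes, leaving the bounded term $\vphi(\bar x)\,\nabla_x f(\bar x)$, which is controlled by $\|\vphi\|_\infty\|\nabla_x f\|_\infty \lesssim 1$ (the admissibility bound $\|\nabla_x\vphi\|_\infty \leq 1/\ell(Q)$ combined with $\vphi\equiv 0$ outside $Q$ yields $\|\vphi\|_\infty \lesssim 1$). The convolution $\nabla_x W*(\nabla_x\vphi\cdot\nabla_x f)$ can be estimated via the kernel bound $|\nabla_x W(\bar z)|\lesssim |\bar z|_p^{-(n+1)}$ of Lemma~\ref{estimates} and the elementary parabolic estimate $\int_Q|\bar x-\bar y|_p^{-(n+1)}\,d\bar y\lesssim \ell(Q)$ (which follows from $|B_p(\bar z,r)|\approx r^{n+2}$). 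The third term, $\nabla_x W*((f-f(\bar x))\,\Theta\vphi)(\bar x)$, is the most delicate: I would integrate by parts in $\bar y$ to shift derivatives from $\Theta\vphi$ onto $\nabla_x W$ and onto $f$, thereby trading the uncontrolled oscillation $f-f(\bar x)$ against $\nabla_x f\in L^\infty$ and exploiting the cancellation $\int \Theta\vphi\,dm=0$.

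For the parabolic BMO estimate on $\partial_t^{1/2}W*(\vphi\nu)$, I would apply $\partial_t^{1/2}$ to the same identity and bound each piece in BMO. The two convolution terms $\partial_t^{1/2}W*(\,\cdot\,)$ can be handled in $L^\infty$ (hence in BMO) using the pointwise bound $|\partial_t^{1/2}W(\bar z)|\lesssim |x|^{-(n-1)}|\bar z|_p^{-2}$ of Lemma~\ref{estimates}, paired with the admissibility bounds on $\vphi$ and the freedom to choose $c$ (now as a suitable average of $f$ over a parabolic neighborhood of the BMO-test cube) to control $f-c$ on average. The main obstacle, which I expect to be the technical heart of the argument, is the term $\partial_t^{1/2}[\vphi(f-c)]$: since no clean Leibniz rule is available, I would unfold the defining integral
\begin{equation*}
\partial_t^{1/2}[\vphi(f-c)](x,t)=\int \frac{\vphi(x,s)(f-c)(x,s)-\vphi(x,t)(f-c)(x,t)}{|s-t|^{3/2}}\,ds
\end{equation*}
and split the increment into pieces that separately engage the smoothness of $\vphi$ and the BMO oscillation of $\partial_t^{1/2}f$. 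Carefully tracking these cancellations so as to use only BMO control (and not pointwise time-regularity, which is unavailable) is the main difficulty, and is the parabolic analogue of the commutator-type estimates that appear in classical Vitushkin--Paramonov localization.
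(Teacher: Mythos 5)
Your Vitushkin-type identity is precisely the one the paper uses (its equations \rf{eqhfk311}--\rf{eqparts}), so the skeleton of the argument is the right one. But there is a genuine gap that affects both halves of your plan, and a misstep in the BMO half.

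\textbf{You are unaware of a key available regularity.} You write that pointwise time-regularity ``is unavailable''. It is available: the hypotheses $\|\nabla_x W*\nu\|_\infty\le1$ and $\|\partial_t^{1/2}W*\nu\|_{*,p}\le1$ imply that $W*\nu$ is Lip$(1/2)$ in $t$ with $\|W*\nu\|_{Lip_{1/2,t}}\lesssim1$ (this is the remark before Lemma~\ref{lemlocnabla}, following \cite[Lemma 1]{Hofmann} and \cite[Theorem 7.4]{Hofmann-Lewis1}). The paper uses this constantly: with $c=W*\nu(\bar x_Q)$ it gives $|W*\nu(\bar y)-c|\lesssim\ell(Q)$ for all $\bar y\in Q$, so that $(W*\nu-c)\Theta\vphi$ is supported on $Q$ with sup-norm $\lesssim1/\ell(Q)$, and then a single routine convolution estimate takes care of the third term in your identity. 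Your proposal to ``integrate by parts in $\bar y$ to shift derivatives off $\Theta\vphi$'' is unnecessary and, as stated, circular (pushing $\Theta$ back onto $W*\cdot$ reproduces what you set out to bound). In fact even your preferred choice $c=f(\bar x)$ closes directly once you know $|f(\bar y)-f(\bar x)|\lesssim|\bar y-\bar x|_p$, since $|\nabla_x W(\bar x-\bar y)|\,|f(\bar y)-f(\bar x)|\lesssim|\bar x-\bar y|_p^{-n}$; you simply did not have that pointwise estimate available to you.

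\textbf{The BMO part is where your plan does not go through.} First, the suggestion to take $c$ as ``a suitable average of $f$ over a neighborhood of the BMO-test cube $R$'' breaks the $L^\infty$ control of the two convolution terms: for those you need $|f-c|\lesssim\ell(Q)$ on $Q$, which forces $c$ to be adapted to $Q$ (e.g.\ $c=W*\nu(\bar x_Q)$), independently of $R$. Second, ``bounding each piece in $L^\infty$, hence in BMO'' cannot be the whole story because $\partial_t^{1/2}[\vphi(W*\nu-c)]$ is \emph{not} in $L^\infty$ -- in particular $\vphi\,\partial_t^{1/2}(W*\nu)$ is only in BMO. The paper has to work for all test cubes $R$, and it does so by a three-way case analysis: (i) when $R$ is comparable to or larger than $Q$, one proves the $L^1$-estimate $\int_R|\partial_t^{1/2}W*(\vphi\nu)|\,dm\lesssim\ell(R)^{n+2}$ (Lemma~\ref{keylemma}), using the product rule for $\partial_t^{1/2}$ and, crucially, comparison with a \emph{smooth mean} $m_{\psi_Q}(\partial_t^{1/2}W*\nu)$ whose contribution is controlled via the antisymmetry of the truncated kernel $K_y(s,u)$; (ii) when the relevant mass sits outside $4R$, one proves directly that the potential has bounded oscillation on $R$ (Lemma~\ref{lembmo}), which in turn requires proving $\|\partial_t W*\nu\|_{\infty,3Q}\lesssim1/\ell(Q)$ from the growth of $\nu$; (iii) the mixed case is patched by a cutoff $\wt\chi_{5R}$ and Lemma~\ref{lem3.3}, which asserts that $W*(\vphi\nu)$ is itself Lip$(1/2)$ in $t$. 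None of these mechanisms -- the smooth mean, the antisymmetry cancellation, the intermediate Lip$(1/2)$ stability lemma, or the split by the relative position of $R$ and $Q$ -- appears in your outline, and without at least the first two the bound for $\partial_t^{1/2}[\vphi(W*\nu-c)]$ does not close.

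In short: right identity, right diagnosis of where the difficulty lies, but you are missing the Lip$(1/2)$-in-$t$ input (which trivializes your hardest $L^\infty$ term) and you have not supplied the actual BMO mechanism, where your one concrete idea (tying $c$ to $R$) is counterproductive.
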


\vv

We say that a distribution $\nu$ in $\R^{n+1}$ has upper parabolic growth of degree $n+1$ if there
exists some constant $C$ such that, given any parabolic cube $Q$ and any function
 $C^2$ function $\vphi$ admissible for $Q$, it holds
 $$|\langle \nu,\vphi\rangle|\leq C\ell(Q)^{n+1}.$$
It is immediate to check that this definition is coherent with the one in \rf{C=1} for 
 positive measures. If we want to be precise about the precise constant involved in the definition, we will say that $\nu$ has upper parabolic $C$-growth of degree $n+1$.

\vv
Before proving Theorem \ref{teoloc} we need several lemmas.  The first one shows that every distribution $\nu$ satisfying the hypotheses of Theorem \ref{teoloc} has upper parabolic growth of degree $n+1$.

\begin{lemma}\label{lemgrow}
Let $\nu$ be a distribution in $\R^{n+1}$ such that 
$$\|\nabla_x W*\nu\|_\infty \leq 1,\qquad \|\partial_t^{1/2} W*\nu\|_{*,p} \leq 1.$$
Then $\nu$ has upper parabolic $C$-growth of degree $n+1$, where $C$ is some absolute constant.
\end{lemma}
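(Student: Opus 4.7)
The plan is to apply the heat operator $\Theta$ to $u:=W*\nu$, using that $\Theta W = -\delta_0$ in $\mathcal D'(\R^{n+1})$, so that $\nu = \partial_t u - \Delta u$, and then estimate separately the two resulting pieces of $\langle\nu,\varphi\rangle$. Pairing with an admissible $\varphi$ on $Q$ and integrating by parts in $x$ gives
\[
\langle \nu, \varphi\rangle \;=\; \langle \nabla_x u, \nabla_x \varphi\rangle \;-\; \langle u, \partial_t\varphi\rangle.
\]
Admissibility, H\"older, and $|Q|=\ell(Q)^{n+2}$ control the first term by
\[
|\langle\nabla_x u,\nabla_x\varphi\rangle|\le \|\nabla_x W*\nu\|_\infty\,\|\nabla_x\varphi\|_{L^1}\le \ell(Q)^{-1}\,|Q| = \ell(Q)^{n+1}.
\]

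For the second term, since $\int_\R \partial_t\varphi(x,t)\,dt=0$ for each $x$, I can subtract from $u$ any function $c=c(x)$ depending on $x$ alone without altering $\langle u,\partial_t\varphi\rangle$. Taking $c(x):=\avint_{I_Q} u(x,t)\,dt$, with $I_Q$ the time-projection of $Q$, yields
\[
|\langle u,\partial_t\varphi\rangle|\le \|\partial_t\varphi\|_\infty\int_Q|u(x,t)-c(x)|\,dm\le \ell(Q)^{-2}\int_Q|u-c(x)|\,dm.
\]
Since $\|\nabla_x u\|_\infty\le 1$, the function $c$ is $1$-Lipschitz in $x$, so $|c(x)-m_Q u|\lesssim \ell(Q)$ on $\pi_x(Q)$; modulo a harmless $\ell(Q)^{n+1}$ contribution from this $x$-Lipschitz piece, matters reduce to the parabolic-BMO-type estimate
\[
\avint_Q|u-m_Q u|\,dm\;\lesssim\;\ell(Q).
\]

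To finish, I would use the spatial Lipschitz bound to dispose of $x$-oscillations and reduce the above to the averaged time-oscillation inequality
\[
\avint_{\pi_x(Q)}\avint_{I_Q}\avint_{I_Q}|u(x,t)-u(x,s)|\,dt\,ds\,dx\;\lesssim\;\ell(Q),
\]
a parabolic-averaged analogue of the classical $1$D embedding $I^{1/2}(\mathrm{BMO})\subset C^{1/2}$, which would be derived from $\|\partial_t^{1/2}u\|_{*,p}\le 1$. An equivalent route avoiding a direct Poincar\'e-type argument is to move one time half-derivative from $\varphi$ onto $u$ via the Fourier-side identity $\partial_t = c\,H_t\circ(\partial_t^{1/2})^2$ (with $H_t$ the 1D Hilbert transform in $t$), obtaining $|\langle u,\partial_t\varphi\rangle|\lesssim \|\partial_t^{1/2}\varphi\|_{H^1_p}$ by parabolic $H^1$--BMO duality, and then bounding the right-hand side by $\ell(Q)^{n+1}$ via an atomic decomposition of $\partial_t^{1/2}\varphi$ in dyadic $t$-shells around $Q$ (using its pointwise tail bound $|\partial_t^{1/2}\varphi(x,t)|\lesssim \ell(Q)^2/|t-t_Q|^{3/2}$ for $t$ far from $I_Q$ and the fact that $\int_\R \partial_t^{1/2}\varphi(x,t)\,dt=0$ for each $x$). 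The main obstacle in either route is that the hypothesis only yields averaged parabolic BMO of $\partial_t^{1/2}u$, not 1D BMO-in-$t$ at each slice, so the Poincar\'e-type inequality or the atomic decomposition must be carried out with care.
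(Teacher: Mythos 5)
Your skeleton matches the paper's: apply $\Theta$ to $u=W*\nu$, integrate by parts, and treat $\langle\nabla_x u,\nabla_x\vphi\rangle$ and $\langle u,\partial_t\vphi\rangle$ separately; the first term is dispatched exactly as in the paper. Your ``second route'' for the time term is also essentially the paper's device: writing, on the Fourier side in $t$, $\partial_t\vphi=c\,\partial_t^{1/2}g$ with $g=H_t\partial_t^{1/2}\vphi=c'\,\partial_t\vphi*_t|t|^{-1/2}$, moving one half-derivative onto $W*\nu$, and pairing $\partial_t^{1/2}(W*\nu)\in \mathrm{BMO}_p$ against $g$.

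The gap is that the one estimate which actually uses $\|\partial_t^{1/2}W*\nu\|_{*,p}\le1$, namely
\[
\Big|\int \big(\partial_t^{1/2}(W*\nu)-m_Q\partial_t^{1/2}(W*\nu)\big)\,g\,dm\Big|\lesssim \ell(Q)^{n+1},
\]
is left unproved. You correctly flag it as ``the main obstacle'' and propose a parabolic $H^1$--BMO duality, but a genuine $H^1_p$ atomic decomposition of $g$ is awkward here: $\supp g\subset Q_1\times\R$ is a thin slab of spatial width $\ell(Q)$, and the natural parabolic cubes $2^iQ$ covering the dyadic time shells have spatial width $2^i\ell(Q)\gg\ell(Q)$, so $g$ restricted to a shell is \emph{not} (a multiple of) a parabolic atom. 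The paper resolves this directly: it proves the pointwise decay $|g(x,t)|\lesssim\ell(Q)^2/|t-t_Q|^{3/2}$ off $2I_Q$, the near-field bound $|g|\lesssim1/\ell(Q)$, notes $\int g\,dm=0$, and on each annulus $A_i=2^iQ\setminus 2^{i-1}Q$ separates $f-m_Qf$ into $(f-m_{2^iQ}f)+(m_{2^iQ}f-m_Qf)$; the first piece is handled with H\"older at exponent $q>n$ together with John--Nirenberg, using $m(\supp g\cap 2^iQ)\approx\ell(Q)^n\ell(2^iQ)^2$, and the second with the telescoping estimate $|m_{2^iQ}f-m_Qf|\lesssim i$. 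The choice $q>n$ is what makes the geometric series close, and none of this is visible in your sketch. Your ``first route'' (subtracting the time-average $c(x)$ and reducing to a Poincar\'e-type bound $\avint_{Q_1}\avint_{I_Q}\avint_{I_Q}|u(x,t)-u(x,s)|\,ds\,dt\,dx\lesssim\ell(Q)$) would indeed finish instantly if you invoked the embedding $\|u\|_{Lip_{1/2,t}}\lesssim\|\nabla_xu\|_\infty+\|\partial_t^{1/2}u\|_{*,p}$ (Hofmann, Hofmann--Lewis), which the paper cites later for Lemmas~\ref{lemlocnabla}--\ref{lem3.3}; but as written you are trying to rederive this nontrivial fact, which is not a one-line averaged argument. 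Also, once you subtract the slice-average $c(x)$ (a function of $x$ alone, annihilated by pairing with $\partial_t\vphi$), the further comparison with $m_Qu$ and the ``harmless $\ell(Q)^{n+1}$ $x$-Lipschitz contribution'' are superfluous and muddy the reduction.
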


\begin{proof}
Let $\vphi$ be a $C^2$ function admissible for a parabolic cube $Q$. Since $W$ is the fundamental solution of $\Theta$, we can write
$$|\langle \nu,\vphi \rangle|=|\langle \nu,\Theta\vphi*W \rangle|\leq|\langle W*\nu,\Delta\vphi \rangle|+|\langle W*\nu,\partial_t\vphi \rangle|=I_1+I_2.$$
To estimate $I_1$ we use that $\|\nabla_x \vphi\|_\infty \leq \frac1{\ell(Q)}$ and $\|\nabla_x W*\nu\|_\infty \leq 1$:
$$I_1=|\langle \nabla_x W*\nu,\nabla_x\vphi \rangle|\leq\|\nabla_x W*\nu\|_\infty\int|\nabla_x\vphi|\,dm\leq\ell(Q)^{n+1}.$$

For $I_2$ we consider the function $g=\partial_t\vphi*_tk$, with $k(t)=|t|^{-1/2}$ and $*_t$ being the convolution on the $t$ variable. Taking the Fourier transform on the variable $t$, we get 
$\partial_t \vphi = c\,\partial_t^{1/2} g$, for a suitable absolute constant $c\neq0$. 
Write $Q=Q_1\times I_Q$, with $Q_1\subset\R^n$ being a cube of side length $\ell(Q)$ and $I_Q\subset\R$ an interval of length $\ell(Q)^2$. 
Because of the
zero mean of $\partial_t \vphi$ (integrating with respect to $t$), it is easy to check that 
$|g(x,t)|$ decays at most like $|t|^{3/2}$ at infinity. Indeed, for $t\notin 2 I_Q$, denoting
by $s_Q$ the center of $I_Q$,
\begin{align}\label{eq3232}
|g(x,t)|& =\Big|\int_{I_Q}\frac{\partial_s\vphi(x,s)}{|t-s|^{1/2}}ds\Big| = \Big|\int_{I_Q}\partial_s\vphi(x,s)\Big(\frac{1}{|t-s|^{1/2}} - \frac{1}{|t-s_Q|^{1/2}}\Big)ds\Big|\\
& \lesssim \frac{\ell(I_Q)}{|t-s_Q|^{3/2}}\int_{I_Q}|\partial_s\vphi(x,s)|ds \lesssim \frac{\ell(I_Q)}{|t-s_Q|^{3/2}}.\nonumber
\end{align}
Together with the fact that $\supp g\subset Q_1\times \R$, this implies that $g\in L^1(\R^{n+1})$.
Further, it is easy to check that $\int g\,dm=0$, for example with the help of the Fourier transform in $t$.

Using the zero average property of $g$, writing $f=\partial_t^{1/2}W*\nu$, we have
\begin{align*}
I_2&= |\langle W*\nu,c\,\partial_t^{1/2} g \rangle| =
|\langle f,c\,g \rangle| = \bigg|c\int \left(f-m_Qf\right)\,g\,dm\bigg|\\
&\lesssim 
\int_{2Q}\left|f-m_Qf\right|\,|g|\,dx\;dt+\int_{\R^{n+1}\setminus 2Q}\left|f-m_Qf\right|\,|g|\,dm\\
&=I_{21}+I_{22}.
\end{align*}
Since for $t\in 4I_Q$, 
$$|g(x,t)|\lesssim \int_{I_Q}\frac{|\partial_t\vphi(x,s)|}{|t-s|^{1/2}}ds\lesssim \|\partial_t\vphi\|_\infty\ell(I_Q)^{1/2}\lesssim \frac1{\ell(Q)},$$
we have $I_{21}\lesssim\|f\|_{*,p}\,\ell(Q)^{n+2}\ell(Q)^{-1}\leq\ell(Q)^{n+1}$.

For $I_{22}$, we split the domain of integration in annuli. Write $A_i= 2^{i} Q\setminus 2^{i-1} Q$ for $i\geq1$.
Remark that for a parabolic cube $Q = Q_1\times I_Q$, we denote
$$2^i Q = 2^i Q_1 \times 2^{2i} I_Q,$$
so that $2^iQ$ is a parabolic cube too (notice that if $Q$ is centered at the origin and we consider the parabolic dilation $\delta_\lambda(x,t) = (\lambda x,\lambda^{2}t)$, $\lambda>0$, we have $2^iQ = \delta_{2^i}(Q)$).
Then, using the decay of $g$ given by \rf{eq3232}, we get
\begin{equation}\label{eq3434}
I_{22}\lesssim\sum_{i=1}^\infty \frac{\ell(Q)^2}{\ell(2^iQ)^{3}}\left(\int_{A_i\cap \supp g}|f-m_{2^{i}Q}f|\,dm+ \int_{A_i\cap \supp g}|m_{2^{i}Q}f-m_{Q}f|\,dm\right).
\end{equation}
To estimate the first integral on the right hand side, recall that 
$\supp g\subset Q_1\times \R$. Using H\"older's inequality with some 
exponent $q\in(0,\infty)$ to be chosen in a moment and the fact that
$f\in BMO_p$ (together with John-Nirenberg), then we get:
\begin{align*}
\int_{A_i\cap \supp g}|f-m_{2^{i}Q}f|\,dm &
\leq \left(\int_{2^iQ}|f-m_{2^{i}Q}f|^q\,dm\right)^{1/q} \,m(\supp g\cap 2^iQ)^{1/q'}\\
& \lesssim \ell(2^iQ)^{(n+2)/q}\,(\ell(Q)^n\,\ell(2^iQ)^2)^{1/q'} = \ell(2^iQ)^{(n/q) + 2}\,\ell(Q)^{n/q'}.
\end{align*}
For the last integral on the right hand side of \rf{eq3434}, we write
$$\int_{A_i\cap\supp g}|m_{2^{i}Q}f-m_{Q}f|\,dm\lesssim i\,m(2^iQ\cap \supp g) \leq i\,\ell(Q)^n\,\ell(2^iQ)^2.$$
Therefore,
$$I_{22}\lesssim\sum_{i=1}^\infty \frac{\ell(Q)^2}{\ell(2^iQ)^{3}}\,\Big(
\ell(2^iQ)^{(n/q)+ 2}\,\ell(Q)^{n/q'} + i\,\ell(Q)^n\,\ell(2^iQ)^2\Big).$$
Choosing $q>n$, we get
$$I_{22}\lesssim \ell(Q)^{n+1}.$$
\end{proof}

\vv
Before going to the next lemma, recall that a function $f(x,t)$ defined in $\mathbb R^{n+1}$ is Lip$(1/2)$ (or H\"older $1/2$) in the $t$ variable if
$$\|f\|_{Lip_{1/2,t}} = \sup_{x\in\R^n,\,t,u\in\R} \frac{|f(x,t)-f(x,u)|}{|t-u|^{1/2}}<\infty.$$
It is known that functions $f$ with
$\nabla_x f\in L^\infty(\R^{n+1})$ and $\partial_t^{1/2}f\in BMO_p(\mathbb R^{n+1})$ are Lip$(1/2)$ in $t$. More precisely,
$$\|f\|_{Lip_{1/2,t}}\lesssim \|\nabla_x f\|_{L^\infty(\R^{n+1})} + \|\partial_t^{1/2}f\|_{*,p}.$$
See \cite[Lemma 1]{Hofmann} and \cite[Theorem 7.4]{Hofmann-Lewis1}.

\vv

\begin{lemma}\label{lemlocnabla}
Let $\nu$ be a distribution in $\R^{n+1}$ such that 
$$\|\nabla_x W*\nu\|_\infty \leq 1,\qquad \| W*\nu\|_{Lip_{1/2,t}} \leq 1.$$
Then, if $\vphi$ is a $C^2$ function admissible for some parabolic cube $Q\subset\R^{n+1}$, we have
$$\|\nabla_x W*(\vphi\nu)\|_\infty \lesssim 1.$$
\end{lemma}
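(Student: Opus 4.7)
My plan is to exploit that $W$ is the fundamental solution of $\Theta = \Delta - \partial_t$ and to derive an algebraic identity that localizes $\nu$ in terms of the potential $u = W*\nu$. Although $u$ is \emph{a priori} only a distribution, the assumptions $\nabla_x u=\nabla_x W*\nu\in L^\infty$ and $u\in Lip_{1/2,t}$ identify $u$ with a continuous function, determined up to an additive constant. Since adding a constant to $u$ does not alter $\nabla_x W*(\vphi\nu)$, I normalize so that $u(\bar x_Q)=0$, where $\bar x_Q=(x_Q,t_Q)$ is the center of $Q$. From the two hypotheses one then reads off the growth bound
$$|u(\bar y)|\;\leq\;\|\nabla_x u\|_\infty\,|y-x_Q|+\|u\|_{Lip_{1/2,t}}\,|t-t_Q|^{1/2}\;\lesssim\;\ell(Q)\qquad\text{for }\bar y=(y,t)\in Q.$$

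Applying the Leibniz rule for $\Theta$ to $\vphi u$ (together with the distributional identity $\Theta u=\nu$) produces
$$\vphi\,\nu \;=\; \Theta(\vphi u)\,-\,u\,\Theta\vphi\,-\,2\,\nabla_x\vphi\cdot\nabla_x u.$$
Convolving with $W$, using $W*\Theta F=F$ applied to the continuous compactly supported function $F=\vphi u$, and differentiating in $x$, I arrive at the key decomposition
$$\nabla_x W*(\vphi\nu)\;=\;\nabla_x(\vphi u)\,-\,\nabla_x W*(u\,\Theta\vphi)\,-\,2\,\nabla_x W*(\nabla_x\vphi\cdot\nabla_x u).$$
The three summands now admit direct pointwise estimates. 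The first is supported in $Q$, and using the normalization on $u$, the admissibility of $\vphi$, and the bound $\|\vphi\|_\infty\lesssim 1$ (which follows from $\supp\vphi\subset Q$ and $\|\partial_t\vphi\|_\infty\leq\ell(Q)^{-2}$ by integration over the time interval of $Q$), one has $\|\nabla_x(\vphi u)\|_\infty\lesssim\ell(Q)^{-1}\cdot\ell(Q)+1\cdot 1\lesssim 1$. Each of the remaining two is a convolution of an $L^\infty$-function of size $\lesssim 1/\ell(Q)$ supported in $Q$ against $\nabla_x W$, and Lemma~\ref{estimates} furnishes $|\nabla_x W(\bar z)|\lesssim|\bar z|_p^{-(n+1)}$. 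For $\bar x$ with $\dist_p(\bar x,Q)\lesssim\ell(Q)$ the bound reduces to the standard estimate $\int_Q|\bar x-\bar y|_p^{-(n+1)}d\bar y\lesssim\ell(Q)$ for an $r^{-(n+1)}$ singularity against the parabolic volume; for $\bar x$ far from $Q$ I pull the kernel out of the integral, obtaining $\ell(Q)^{n+2}\cdot\ell(Q)^{-1}\cdot\dist_p(\bar x,Q)^{-(n+1)}\lesssim 1$.

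The main technical nuisance I anticipate is justifying the Leibniz identity in the distributional sense given that $u$ is only Lipschitz in $x$ and H\"older $1/2$ in $t$. The clean way to handle this is the standard one: mollify $\nu$ with a parabolic approximation of the identity, carry out the identity and all the pointwise estimates above for the smooth approximants $u_\varepsilon=W*\nu_\varepsilon$ with constants uniform in $\varepsilon$, and then pass to the limit in $L^\infty$. Once this regularization is set up the argument is a direct parabolic analogue of the Vitushkin--Paramonov localization for Cauchy and Riesz potentials.
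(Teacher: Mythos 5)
Your proposal is correct and follows essentially the same approach as the paper: the Leibniz rule $\Theta(\vphi u)=\vphi\,\Theta u+u\,\Theta\vphi+2\nabla_x\vphi\cdot\nabla_x u$, subtraction of the constant $c=W*\nu(\bar x_Q)$ to get $|u|\lesssim\ell(Q)$ on $Q$, the three-term decomposition $\nabla_x W*(\vphi\nu)=\nabla_x(\vphi u)-\nabla_x W*(u\,\Theta\vphi)-2\nabla_x W*(\nabla_x\vphi\cdot\nabla_x u)$, and the final observation that $\nabla_x W$ convolved against an $L^\infty$ density of size $\ell(Q)^{-1}$ supported in $Q$ is $O(1)$. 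The only addition is that you flag explicitly the need to justify the Leibniz identity distributionally and propose mollification, a point the paper leaves implicit; this is a reasonable bit of extra care but does not change the structure of the argument.
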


\begin{proof}
Notice that for $f$ and $g$ in $C^2$ we have
$\Theta (fg)=g\Theta f+f\Theta g+2\nabla_x f\nabla_x g.$ Therefore, since $W$ is the fundamental solution of $\Theta$, for any constant $c$ we can write
\begin{align}\label{eqhfk311}
\Theta(\vphi\,(W*\nu-c)) &= \vphi\,\Theta(W*\nu - c) + \Theta\vphi\,(W*\nu - c) + 2\,\nabla_x\vphi\cdot
(\nabla_xW*\nu)\\
& = \vphi\,\nu + \Theta\vphi\,(W*\nu - c) + 2\,\nabla_x\vphi\cdot
(\nabla_xW*\nu). \nonumber
\end{align}
Therefore,
\begin{equation}\label{3terms}
\nabla_xW*(\vphi\nu)=\nabla_x(\vphi(W*\nu-c))-\nabla_xW*\left(\Theta\vphi(W*\nu-c)\right)-2\nabla_xW*(\nabla_x\vphi(\nabla_xW*\nu)).
\end{equation}
To estimate the $L^\infty$ norm of \eqref{3terms}, write $Q=Q_1\times I_Q$, where $Q_1\subset\R^n$ is a cube of side length $\ell(Q)$ and $I_Q\subset\R$ an interval of length $\ell(Q)^2$ and choose $c=W*\nu(x_Q,t_Q)$, with $(x_Q, t_Q)$ being the center of the parabolic cube $Q$. Since $W*\nu$ is a Lipschitz function on the $x$ variable and Lip$(1/2)$ on the $t$ variable, for $\bar x=(x,t)\in Q$ we can write
\begin{equation}\label{lip}
\begin{split}
|W*\nu(x,t)-W*\nu(x_Q,t_Q)|&\leq |W*\nu(x,t)-W*\nu(x_Q,t)|\\&\quad+|W*\nu(x_Q,t)-W*\nu(x_Q,t_Q)|\\&\lesssim \ell(Q)+(\ell(Q)^2)^{1/2}\lesssim \ell(Q).
\end{split}
\end{equation}
Using this estimate together with  $\|\nabla_x W*\nu\|_\infty\leq 1$ and the fact that $\vphi$ is admissible for $Q$, we get
$$\|\nabla_x(\vphi(W*\nu-c))\|_\infty\leq\|\nabla_x\vphi\|_\infty\|W*\nu-W*\nu(x_Q,t_Q)\|_\infty+\|\vphi\|_\infty\|\nabla_xW*\nu\|_\infty\lesssim 1.$$

We claim now that if $g$ is a function suppported on $Q$ and such that $\|g\|_\infty\leq\ell(Q)^{-1}$, then $\|\nabla_xW*g\|_\infty\lesssim 1$.
Once the claim is proved, to estimate the $L^\infty-$norm of the second and third terms in \eqref{3terms}, take  $g=\Theta\vphi(W*\nu-c)$ (recall that we have chosen $c=W*\nu(x_Q,t_Q)$) 
and  $g=\nabla_x\vphi(\nabla_xW*\nu)$  respectively. Notice that in the first case the bound $\|g\|_\infty\leq\ell(Q)^{-1}$ is obtained by using \eqref{lip} and the fact that $\vphi$ is admissible for $Q$, 
while in the second case, one uses the admissibility of $\vphi$ together with $\|\nabla_xW*\nu\|_\infty\leq 1$. So the claim applies to both terms, and we therefore obtain $\|\nabla_xW*(\vphi\nu)\|_\infty\lesssim 1.$  

To prove the claim, notice that for $\bar y=(y,s)$,
$$\frac{1}{|\bar y|^{n+1}_p} =\frac1{(\max(|y|,s^{1/2}))^{n+1}}\leq\frac1{|y|^{n-1/2}}\frac1{s^{3/4}}.$$
Take a function $g$ supported on $Q$ and such that $\|g\|_\infty\leq\ell(Q)^{-1}$. For $\bar x\in2 Q$, using Lemma \ref{estimates} we have
\begin{align*}
|\nabla_xW*g(\bar x)|&\leq\|g\|_\infty\int_Q\frac{dm(\bar y)}{|\bar x-\bar y|_p^{n+1}}\\&\leq\frac1{\ell(Q)}\int_{Q_1}\frac{dy}{|x-y|^{n-1/2}}\int_{I_Q}\frac{ds}{|t-s|^{3/4}}\\&\lesssim\frac{\ell(Q)^{1/2}(\ell(Q)^2)^{1/4}}{\ell(Q)}=1.
\end{align*}
and if $\bar x\in (2Q)^c$, then $|\bar x-\bar y|_p^{n+1}\geq\ell(Q)^{n+1}$. Therefore
\begin{align*}
|\nabla_xW*g(\bar x)|&\leq\|g\|_\infty\int_Q\frac{dm(\bar y)}{|\bar x-\bar y|_p^{n+1}}\lesssim\frac{\ell(Q)^{n+2}}{\ell(Q)\ell(Q)^{n+1}}=1.
\end{align*}
Hence $\|\nabla_xW*g\|_\infty\lesssim 1$. This finishes the proof of the claim and the lemma.
\end{proof}

\vv
\begin{lemma}\label{lem3.3}
Let $\nu$ be a distribution in $\R^{n+1}$ such that 
$$\|\nabla_x W*\nu\|_\infty \leq 1,\qquad \| W*\nu\|_{Lip_{1/2,t}} \leq 1.$$
Then, if $\vphi$ is a $C^2$ function admissible for some parabolic cube $Q\subset\R^{n+1}$, we have
$$\| W*(\vphi\nu)\|_{Lip_{1/2,t}}  \lesssim 1.$$
\end{lemma}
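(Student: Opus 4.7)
The plan is to follow the same scheme as in Lemma \ref{lemlocnabla}. Using $W*\Theta u = u$ applied to $u = \varphi(W*\nu-c)$ together with the product identity \rf{eqhfk311}, with the choice $c = W*\nu(x_Q,t_Q)$ one rewrites
\begin{equation*}
W*(\varphi\nu) = \varphi(W*\nu-c) - W*\bigl(\Theta\varphi\cdot(W*\nu-c)\bigr) - 2\,W*\bigl(\nabla_x\varphi\cdot(\nabla_xW*\nu)\bigr),
\end{equation*}
and estimates $\|\cdot\|_{Lip_{1/2,t}}$ of each of the three terms. For the first term, I would split
\begin{equation*}
\varphi(x,t)(W*\nu(x,t)-c) - \varphi(x,u)(W*\nu(x,u)-c) = [\varphi(x,t)-\varphi(x,u)](W*\nu(x,t)-c) + \varphi(x,u)[W*\nu(x,t)-W*\nu(x,u)],
\end{equation*}
and combine $\|\partial_t\varphi\|_\infty\le\ell(Q)^{-2}$, the bound $|W*\nu-c|\lesssim\ell(Q)$ on $2Q$ from \rf{lip}, and the hypothesis $\|W*\nu\|_{Lip_{1/2,t}}\le 1$. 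A short case analysis according to whether $|t-u|\le\ell(Q)^2$ or not, and whether both or only one of $(x,t),(x,u)$ lie in $\supp\varphi$ (using that $\varphi\in C^2$ vanishes smoothly on $\partial Q$), gives the Lip$(1/2,t)$ bound for this piece.

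The remaining two terms both have the form $W*g$ with $g$ supported on $Q$ and $\|g\|_\infty\lesssim\ell(Q)^{-1}$ (as in the proof of Lemma \ref{lemlocnabla}: use $|\Theta\varphi|\lesssim\ell(Q)^{-2}$ with $|W*\nu-c|\lesssim\ell(Q)$ on $Q$ for the second, and $|\nabla_x\varphi|\le\ell(Q)^{-1}$ with $\|\nabla_xW*\nu\|_\infty\le 1$ for the third). I would therefore isolate the claim: if $g$ is supported on $Q$ with $\|g\|_\infty\le\ell(Q)^{-1}$, then $\|W*g\|_{Lip_{1/2,t}}\lesssim 1$. By the Hofmann--Lewis estimate recalled just before Lemma \ref{lemlocnabla}, it suffices to show $\|\nabla_x(W*g)\|_\infty\lesssim 1$ (which is exactly the claim inside Lemma \ref{lemlocnabla}) and $\|\partial_t^{1/2}(W*g)\|_{*,p}\lesssim 1$. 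For the latter, since $\partial_t^{1/2}$ is convolution in the $t$ variable it commutes with convolution in $(x,t)$, so $\partial_t^{1/2}(W*g) = (\partial_t^{1/2}W)*g$, and Lemma \ref{estimates} gives
\begin{equation*}
|\partial_t^{1/2}(W*g)(\bar x)|\lesssim \ell(Q)^{-1}\int_Q\frac{dm(\bar y)}{|x-y|^{n-1}\,|\bar x-\bar y|_p^2}.
\end{equation*}

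The main technical obstacle is verifying that this last integral is $\lesssim\ell(Q)$ uniformly in $\bar x$. I would split: for $\bar x\notin 2Q$ one has $|\bar x-\bar y|_p\gtrsim\ell(Q)$ on the support, reducing the integral to $\ell(Q)^{-2}\int_Q|x-y|^{-(n-1)}\,dm(\bar y)\lesssim\ell(Q)$; for $\bar x\in 2Q$ one splits further according to whether $|x-y|\ge|t-s|^{1/2}$ (so $|\bar x-\bar y|_p=|x-y|$, and the $s$-integral is cut at $|x-y|^2$, producing $\int_{Q_1}|x-y|^{-(n-1)}\,dy\lesssim\ell(Q)$) or the opposite case (giving a logarithmic factor that is harmless after spherical integration). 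The apparent non-integrability of the kernel $1/(|y|^{n-1}|\bar y|_p^2)$ at the origin is reconciled precisely by the parabolic scaling built into these cuts. This yields $\|\partial_t^{1/2}(W*g)\|_\infty\lesssim 1$, which trivially embeds into $BMO_p$ and closes the argument.
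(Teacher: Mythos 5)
Your proposal is correct and the overall scheme coincides with the paper's: you use the same integration-by-parts identity \eqref{eqparts} with $c=W*\nu(\bar x_Q)$, bound the first term $\varphi(W*\nu-c)$ directly by a case analysis on whether $|t-u|\le\ell(Q)^2$ and on which of $(x,t),(x,u)$ lie in the support of $\varphi$ (the paper achieves the same via the auxiliary point $\tilde x'$), and reduce the last two terms to the sub-lemma that $g$ supported on $Q$ with $\|g\|_\infty\lesssim\ell(Q)^{-1}$ yields $\|W*g\|_{Lip_{1/2,t}}\lesssim1$. The genuine divergence is in that sub-lemma (Lemma~\ref{bounded}). The paper proves it from scratch: it writes $W(x,t)=c\,|x|^{1-n}(t-u)^{-1/2}f(|x|^2/(t-u))$ with $f(s)=s^{(n-1)/2}e^{-s}$, and then estimates the time difference $|W*g(x,t)-W*g(x,r)|$ directly by splitting the kernel difference into a part coming from $(t-u)^{-1/2}$ and a part coming from $f$, which takes a page of careful integral estimates. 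You instead invoke the Hofmann--Lewis inequality $\|F\|_{Lip_{1/2,t}}\lesssim\|\nabla_xF\|_\infty+\|\partial_t^{1/2}F\|_{*,p}$ (already quoted in the paper just before Lemma~\ref{lemlocnabla}), and verify the two ingredients: $\|\nabla_x(W*g)\|_\infty\lesssim1$ is exactly the claim embedded in the proof of Lemma~\ref{lemlocnabla}, and $\|\partial_t^{1/2}(W*g)\|_\infty\lesssim1$ follows from the bound $|\partial_t^{1/2}W(\bar x)|\lesssim|x|^{1-n}|\bar x|_p^{-2}$ of Lemma~\ref{estimates} together with the integral estimate $\int_Q|x-y|^{1-n}|\bar x-\bar y|_p^{-2}\,dm(\bar y)\lesssim\ell(Q)$. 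This is shorter and modular, and in fact the paper deploys essentially the same estimate later, in Lemma~\ref{keylemma}, using the factorization $|\bar y|_p^{-2}\le|y|^{-1/2}|s|^{-3/4}$, which separates variables at once and makes the cut you perform unnecessary (in particular no logarithmic factor ever appears, contrary to what you suggest for the regime $|x-y|<|t-s|^{1/2}$; a clean $\ell(Q)$ comes out). The only costs of your route are the reliance on the cited Hofmann--Lewis result rather than a self-contained computation, and the (harmless, but worth a word) interchange $\partial_t^{1/2}(W*g)=(\partial_t^{1/2}W)*g$, which is justified by the absolute convergence established by the very integral bound above.
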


\begin{proof}
For any constant $c$, from the identity \rf{eqhfk311} we can write  
\begin{equation}\label{eqparts}
W*\varphi\nu=\varphi(W*\nu-c)-2W*(\nabla_x\varphi\nabla_x (W*\nu))
-W*((W*\nu-c)\Theta\varphi).
\end{equation}
Set $\bar{x}=(x,t)$ and $\tilde{x}=(x,r),$ with $x\in \mathbb R^n$ and $t,r\in\R$. Then 
\begin{align*}
W*(\varphi\nu)(\bar{x})-W*(\varphi\nu)(\tilde{x})&=
\big(\varphi(\bar{x})(W*\nu(\bar{x})-c)-\varphi(\tilde{x})(W*\nu(\tilde{x})-c)\big)\\
&\quad+ \big(-2W*(\nabla_x\varphi \,\nabla_x (W*\nu))(\bar{x})+2W*(\nabla_x\varphi \,\nabla_x (W*\nu))(\tilde{x})\big)\\
&\quad+\big(-W*((W*\nu-c)\Theta\varphi)(\bar{x})+W*((W*\nu-c)\,\Theta\varphi)(\tilde{x})\big)\\&
=A+B+C. 
\end{align*}
We start with the term $A$. If $\bar{x}, \tilde{x}\notin Q$, then $A=0$. Otherwise, let us assume that $\bar{x}\in Q$ and take 
$c=W*\nu(\bar x_{Q})$, where $\bar x_Q$ is the center of $Q$.
Choose a point $\tilde x'$ such that
$\tilde x'= \tilde x$  when $\tilde{x}\in Q$, and otherwise take $\tilde x' \in 2Q$ of the form $\tilde x' =(x,r')$ 
satisfying $ |\tilde x'-\bar{x}|\le |\tilde{x}-\bar{x}|$. Observe that in any case we have
$$\varphi(\tilde{x})(W*\nu(\tilde{x})-c) = \varphi(\tilde{x}')(W*\nu(\tilde{x}')-c)\big)$$
and 
$$|\bar x - \tilde x'|_p\leq \min\big(C\ell(Q),\,|\bar x - \tilde x|_p\big).$$
Then we have 
\begin{align*}
|A|&\le|\varphi(\tilde x')-\varphi(\bar{x})|\,|W*\nu(\tilde x')-c|
+|\varphi(\bar{x})|\,
|W*\nu(\tilde x')-W*\nu(\bar{x})|\\
&\lesssim\frac{|r'- t|}{\ell(Q)^2}\,\ell(Q)+|r'-t|^{1/2}\lesssim |r'-t|^{1/2}\leq |r-t|^{1/2}.
\end{align*}

 \vv
 To estimate the terms $B$ and $C$ we need the following result.
 \begin{lemma}\label{bounded}
Let $g$ be a function supported on a parabolic cube $Q$ and such that $\|g\|_{\infty}\lesssim\frac{1}{\ell(Q)}$.
 Then $\|W*g\|_{Lip_{1/2,t}}\lesssim 1$.
\end{lemma}	

Using Lemma \ref{bounded} we can finish the proof of Lemma \ref{lem3.3}. To estimate $B$ choose $g=\nabla_x\varphi(\nabla_x W*\nu)$. Then clearly $\|g\|_{\infty}\lesssim\frac{1}{\ell(Q)}$
and thus $|B|\lesssim |t-r|^{1/2}$.
For the term $C$, set $g =(W*\nu-c)\Theta\varphi$, with $c=W*\nu(\bar x_{Q})$, $\bar x_Q=(x_Q,t_Q)$ being the center of $Q$. Then, 
for all $\bar y= (y,s)\in Q$,  
\begin{align*}
|W*\nu(\bar y)-W*\nu(\bar x_Q)|&\le 
|W*\nu(y,s)-W*\nu(x_Q,s)|+|W*\nu(x_Q,s)-W*\nu(x_Q,t_Q)|\\
&\le\ell(Q)\|\nabla_x W*\nu\|_{\infty}+|s-t_Q|^{1/2}\|W*\nu\|_{Lip_{1/2,t}}\lesssim\ell(Q).\\
\end{align*}
Consequently $\|g\|_{\infty}\lesssim\ell(Q)\|\Theta\varphi\,\|_{\infty}\lesssim\ell(Q)^{-1}$. 
\end{proof}
\vv

\begin{proof}[Proof of Lemma \ref{bounded}]
Set $\bar{x}=(x,t)$ and $\tilde{x}=(x,r),$ where $x\in \mathbb R^n$ and $t,r\in \R$. Then 
\begin{align*}
W*g(x,t)&= C_n \iint \frac{1}{(t-u)^{n/2}}e^{-\frac{|x-z|^2}{4(t-u)}}g(z,u)\chi_{\{u<t\}}\,dz\,du= \\
&=C_n\iint\frac{1}{(t-u)^{1/2}}\frac{1}{|x-z|^{n-1}}f\Big(\frac{|x-z|^2}{t-u}\Big)g(z,u)\,dz\,du,
\end{align*}
where $f(s)=s^{(n-1)/2}e^{-s}\chi_{\{s>0\}}$.

So, taking $Q=Q_1\times I_Q,$ with $Q_1\subset \mathbb R^n$ being a cube of side length $\ell(Q)$ and $I_Q\subset \mathbb R$ an interval of length $\ell(Q)^2$, we have 
\begin{align*}
|W*g(\bar{x}&)- W*g(\tilde{x})|\\
&\lesssim \frac{1}{\ell(Q)}\iint_Q\frac{1}{|x-z|^{n-1}}\Big|\frac{1}{(t-u)^{1/2}}f\Big(\frac{|x-z|^2}{t-u}\Big)
-\frac{1}{(r-u)^{1/2}}f\Big(\frac{|x-z|^2}{r-u}\Big)\Big|\,dz\,du\\
&\lesssim 
\frac{1}{\ell(Q)} \int_{Q_1}\frac{dz}{|x-z|^{n-1}}\int_{I_Q}
\Big|\frac{1}{|t-u|^{1/2}}-
\frac{1}{|r-u|^{1/2}}\Big|\,du \\
&\quad+\frac{1}{\ell(Q)}\int_{Q_1}\frac{dz}{|x-z|^{n-1}}
\int_{I_Q}\frac1{|t-u|^{1/2}}\Big|f\Big(\frac{|x-z|^2}{t-u}\Big)-f\Big(\frac{|x-z|^2}{r-u}\Big)\Big|\,du\\
&=A+B, 
\end{align*} 
where in the last inequality we have used that $\|f\|_{\infty}\lesssim 1.$	
Now, 
\begin{align*}
A&\lesssim
\int_{|t-u|\leq2|t-r|}\Big|\frac{1}{|t-u|^{1/2}}-\frac{1}{|r-u|^{1/2}}\Big|\,du+ \int_{|t-u|>2|t-r|}\Big|\frac{1}{|t-u|^{1/2}}-\frac{1}{|r-u|^{1/2}}\Big|\,du\\
&=A_1+A_2,
\end{align*}
with
\begin{equation*}
A_1\le \int_{|t-u|\leq 2|t-r|}\frac{du}{|t-u|^{1/2}}+\int_{|r-u|<3|t-r|}\frac{du}{|r-u|^{1/2}}
\lesssim |t-r|^{1/2}
\end{equation*}
and
\begin{equation*}
A_2\lesssim\int_{|t-u|>2|t-r|}\frac{|t-r|}{|t-u|^{3/2}}du\lesssim  |t-r|^{1/2}.
\end{equation*}
Finally, to estimate $B$ we will use that for $|t-u|>2|t-r|$,
\begin{align*}
\Big|f\Big(\frac{|x-z|^2}{t-u}\Big)-f\Big(\frac{|x-z|^2}{r-u}\Big)\Big|&\le \|f'\|_{\infty, I} |x-z|^2\Big|\frac{1}{t-u}-\frac{1}{r-u}\Big|\le |x-z|^2\frac{|t-r|}{|t-u|^2}\|f'\|_{\infty, I} , 
\end{align*}
where $I$ is the interval $\big[\frac{|x-z|^2}{t-u},\frac{|x-z|^2}{r-u}\big].$ 
In the case $|t-u|\leq2|t-r|$, we just take into account that $\|f\|_\infty\leq1$.
Then,
\begin{align*}
B&\le\frac{1}{\ell(Q)}\int_{|t-u|\le2|t-r|}\int_{z\in Q_1} \frac{1}{|x-z|^{n-1}}\frac{1}{|t-u|^{1/2}}\,du\,dz\\
&\quad+\frac{1}{\ell(Q)}\int_{|t-u|>2|t-r|}\int_{z\in Q_1} \frac{1}{|x-z|^{n-1}}\frac{1}{|t-u|^{1/2}}\frac{|x-z|^2|t-r|}{|t-u|^2}\|f'\|_{\infty, I}\,du\,dz\\
&=B_1+B_2.
\end{align*}

The term $B_1$ is clearly bounded by $C|t-r|^{1/2}$. To estimate $B_2$ we use that $f$ is a smooth function satisfying $|f'(s)|\lesssim |s|^{-1}$  and that $|s|\approx \frac{|x-z|^2}{|t-u|}$
for all $s\in I$. Therefore,
\begin{align*}
&B_2\lesssim\frac{1}{\ell(Q)}
\int_{|t-u|>2|t-r|}\int_{z\in Q_1} 
\frac{|t-r|}{|x-z|^{n-3}|t-u|^{5/2}}\frac{|t-u|}{|x-z|^2}\,du\,dz\lesssim|t-r|^{1/2}.\\
\end{align*}
\end{proof}

\vv
\begin{lemma}\label{keylemma}
Let $\nu$ be a distribution in $\R^{n+1}$ such that 
$$\|\nabla_x W*\nu\|_\infty \leq 1,\qquad \|\partial_t^{1/2} W*\nu\|_{*,p} \leq 1.$$
Let $Q,R\subset\R^{n+1}$ be parabolic cubes such that $Q\subset R$.
If $\vphi$ is a $C^2$ function admissible for  $Q$, then we have
$$\int_R |\partial_t^{1/2} W*(\vphi\nu)|\,dm  \lesssim \ell(R)^{n+2}.$$
\end{lemma}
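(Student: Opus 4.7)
The plan is to apply $\partial_t^{1/2}$ to the identity \rf{eqparts} (with $c = W*\nu(\bar x_Q)$, $\bar x_Q$ the center of $Q$) and bound each of the three resulting pieces in $L^1(R)$. Explicitly,
\[
\partial_t^{1/2}W*(\vphi\nu) = T_1 + T_2 + T_3,
\]
where $T_1 = \partial_t^{1/2}[\vphi(W*\nu-c)]$ and $T_2, T_3$ have the form $\partial_t^{1/2}W*g$ for a function $g$ supported in $Q$ with $\|g\|_\infty\lesssim 1/\ell(Q)$, coming respectively from $g=\nabla_x\vphi\cdot\nabla_x(W*\nu)$ and $g=(W*\nu-c)\Theta\vphi$ (combining admissibility of $\vphi$, $\|\nabla_x W*\nu\|_\infty\le 1$, and $|W*\nu-c|\lesssim\ell(Q)$ on $2Q$ from \rf{lip}). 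It suffices to prove $\|T_i\|_{L^1(R)}\lesssim \ell(R)^{n+2}$ for each $i$.

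For $T_2$ and $T_3$: the pointwise kernel bound $|\partial_t^{1/2}W(\bar z)|\lesssim |z|^{-(n-1)}|\bar z|_p^{-2}$ of Lemma~\ref{estimates} yields, by direct computation in parabolic coordinates, $\int_{B_p(\bar x,r)}|\partial_t^{1/2}W(\bar x-\bar y)|\,dm(\bar y)\lesssim r$ for all $r>0$. For $\bar x\in 2Q$ this gives $|T_i(\bar x)|\lesssim 1$, hence $\|T_i\|_{L^1(2Q)}\lesssim\ell(Q)^{n+2}$. For $\bar x\in R\setminus 2Q$, Fubini combined with a parabolic annular decomposition over $|\bar x-\bar y|_p\sim 2^k\ell(Q)$ (each annulus contributing $\sim 2^k\ell(Q)$) yields $\int_{R\setminus 2Q}|\partial_t^{1/2}W(\bar x-\bar y)|\,dm(\bar x)\lesssim\ell(R)$ uniformly in $\bar y\in Q$, so $\|T_i\|_{L^1(R\setminus 2Q)}\lesssim \|g\|_\infty m(Q)\ell(R)\lesssim\ell(R)^{n+2}$.

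For $T_1 = \partial_t^{1/2}h$ with $h = \vphi(W*\nu-c)$: $h$ is supported in $Q=Q_1\times I_Q$, $|h|\lesssim\ell(Q)$ on $2Q$ (by \rf{lip}), and Lip$(1/2)$ in $t$. Since $h(x,\cdot)\equiv 0$ for $x\notin Q_1$, $T_1$ vanishes there. For $x\in Q_1$ and $t\in I_R\setminus 2I_Q$, the defining integral reduces to the absolutely convergent $\int_{I_Q}h(x,s)|s-t|^{-3/2}\,ds$, bounded by $C\ell(Q)^3/|t-t_Q|^{3/2}$; its integral over $Q_1\times(I_R\setminus 2I_Q)$ is $\lesssim\ell(Q)^{n+2}$. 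The delicate region is $Q_1\times 2I_Q$, where the pointwise integral is not absolutely convergent because $W*\nu$ is only Lip$(1/2)$ in $t$. On this region we apply the fractional Leibniz identity
\[
\partial_t^{1/2}(\vphi g)(x,t) = \vphi\,\partial_t^{1/2}g + g\,\partial_t^{1/2}\vphi + C(\vphi,g)(x,t),
\]
with $g=W*\nu-c$ and commutator $C(\vphi,g)(x,t)=\int[\vphi(x,s)-\vphi(x,t)][g(x,s)-g(x,t)]|s-t|^{-3/2}ds$: the commutator is absolutely convergent (the $O(|s-t|/\ell(Q)^2)$ from $\vphi$'s smoothness balances the $O(|s-t|^{1/2})$ from Lip$(1/2)$-regularity of $g$) and bounded pointwise by $C$; $g\,\partial_t^{1/2}\vphi$ is controlled by $|g|\lesssim\ell(Q)$ and $|\partial_t^{1/2}\vphi|\lesssim 1/\ell(Q)$ (from admissibility of $\vphi$); and the leading term $\vphi\,\partial_t^{1/2}(W*\nu)$ is estimated via the BMO hypothesis through John--Nirenberg. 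The main obstacle lies precisely here: BMO controls oscillations about the mean $m_Q\partial_t^{1/2}(W*\nu)$, but this mean is not a priori bounded. The key trick to resolve this is to absorb the mean, either by a careful re-centering in \rf{eqparts} or by exploiting the upper parabolic $(n+1)$-growth of $\nu$ from Lemma~\ref{lemgrow}, closing the estimate with the desired constant $C\ell(R)^{n+2}$.
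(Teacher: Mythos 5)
The decomposition via \eqref{eqparts} with $c = W*\nu(\bar x_Q)$ and the treatment of the two potential terms $\partial_t^{1/2}W*g_i$ (using the pointwise kernel bound from Lemma~\ref{estimates}) match the paper, and your direct estimate of $\partial_t^{1/2}[\vphi(W*\nu-c)]$ on $Q_1\times(I_R\setminus 2I_Q)$ via the absolutely convergent integral $\int_{I_Q} h(x,s)|s-t|^{-3/2}ds$ is a clean and valid alternative to the paper's term-by-term estimate of the $A$-term, avoiding the logarithmic loss. However, there is a genuine gap inside $Q_1\times 2I_Q$, and one of the intermediate claims is false.

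First, the claim that the commutator $C(\vphi,g)(x,t)=\int[\vphi(x,s)-\vphi(x,t)][g(x,s)-g(x,t)]|s-t|^{-3/2}ds$ with $g=W*\nu-c$ is \emph{absolutely convergent and pointwise bounded by $C$} is incorrect. The cancellation $|\vphi(x,s)-\vphi(x,t)|\lesssim|s-t|/\ell(Q)^2$ is only usable for $|s-t|\lesssim\ell(Q)^2$. For $|s-t|>\ell(Q)^2$ and $s$ outside the support of $\vphi(x,\cdot)$, the integrand is $-\vphi(x,t)\,[g(x,s)-g(x,t)]\,|s-t|^{-3/2}$, and the only available bound $|g(x,s)-g(x,t)|\lesssim|s-t|^{1/2}$ gives a non-integrable $|s-t|^{-1}$ tail whenever $\vphi(x,t)\neq0$. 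So the commutator is \emph{not} absolutely convergent and, more to the point, the far part of the commutator and the term $\vphi\,\partial_t^{1/2}(W*\nu)$ are \emph{individually} uncontrolled; only a specific combination of them is bounded. Second, and consequently, you identify the main obstacle — the mean $m_Q(\partial_t^{1/2}W*\nu)$ is not a priori bounded — but the two remedies you gesture at (``careful re-centering in \eqref{eqparts}'' or ``exploiting the $(n+1)$-growth of $\nu$'') are not carried out, and neither obviously closes the estimate: re-centering $c$ in \eqref{eqparts} only shifts $W*\nu$, not $\partial_t^{1/2}(W*\nu)$, and the growth of $\nu$ does not directly yield a bound on this mean. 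The paper's actual resolution is a genuinely different idea you are missing: write $\partial_t^{1/2}W*\nu = F_1 + F_2$ as the truncation at scale $\ell(Q)^2$, replace the BMO mean by the \emph{smooth} mean $m_{\psi_Q}$, show $|m_{\psi_Q}F_1|\lesssim 1$ using the antisymmetry of the truncated kernel against $\psi_Q(y,u)-\psi_Q(y,s)$, and then pair the leftover term $D=\vphi\, m_{\psi_Q}(\partial_t^{1/2}W*\nu)$ with the far piece $C_{2,2}$ of the commutator, proving their difference is pointwise $\lesssim\chi_Q$ by showing $|F_2(\bar y)-F_2(\bar x)|\lesssim1$ for $\bar x\in Q$, $\bar y\in 2Q$, via the Lipschitz/Lip$(1/2)$ estimates on $W*\nu$. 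Without this pairing-plus-antisymmetry argument the proof does not close.
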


\begin{proof}
From the integration by parts formula \rf{eqparts}, we infer that
$$\partial_t^{1/2} W*(\vphi\nu)= \partial_t^{1/2}\big[\varphi(W*\nu-c)\big]-
2\partial_t^{1/2}W*(\nabla_x\varphi\,\nabla_x W*\nu)
-\partial_t^{1/2}W*((W*\nu-c)\,\Theta\varphi).
$$
We choose $c=W*\nu(\bar x_Q)$, where $\bar x_Q$ is the centre of $Q$.

First we will estimate the $L^1$ norm on $R$ of the last two terms.
We denote $g_1 = \nabla_x\varphi\,\nabla_x W*\nu$ and $g_2 = (W*\nu-c)\,\Theta\varphi$. Notice that,   
$\supp g_i\subset Q$ for $i=1,2$, and also
$$\|g_1\|_\infty\lesssim \frac1{\ell(Q)},$$
Also, from the respective Lip and Lip $1/2$ conditions on the $x$ and $t$ variables,
it follows that
$$|W*\nu(\bar x)-W*\nu(\bar x_Q)|\lesssim \ell(Q)\quad\mbox{ for all $\bar x\in Q$.}$$
Therefore,
$$\|g_2\|_\infty \leq \|W*\nu-W*\nu(\bar x_Q)\|_{\infty,Q}\,\|\Theta\vphi\|_\infty\lesssim \ell(Q)\,\frac1{\ell(Q)^2} = \frac1{\ell(Q)}.$$
Next notice that, by Lemma \ref{estimates},
$$|\partial_t^{1/2} W(\bar x)|\lesssim \frac1{|x|^{n-1}\,|\bar x|_p^2}\leq
\frac1{|x|^{n-1}\, |x|^{1/2}\,|t|^{3/4}} = \frac1{|x|^{n-1/2}\, |t|^{3/4}}.$$
Then, writing
 $Q=Q_1\times I_Q$, where $Q_1$ is a cube with side length $\ell(Q)$ in $\R^n$ and $I_Q$ is an interval of length $\ell(Q)^2$,
we deduce that, for any $\bar x\in\R^{n+1}$,
\begin{align*}
|\partial_t^{1/2} W*g_i(\bar x)| & \lesssim \frac1{\ell(Q)} \int_Q |\partial_t^{1/2} W(\bar x - \bar y)|\,d\bar y\\
& \lesssim \frac1{\ell(Q)}\int_{x\in Q_1}\frac1{|x-y|^{n-1/2}}\,dy \int_{u\in I_Q} \frac1{|t-u|^{3/4}}\,du\\
& \lesssim \frac1{\ell(Q)}\,\ell(Q)^{1/2}\,(\ell(Q)^2)^{1/4} = 1.
\end{align*}
Therefore,
$$\int_R \big|2\partial_t^{1/2}W*(\nabla_x\varphi\,\nabla_x W*\nu)
+\partial_t^{1/2}W*((W*\nu-c)\,\Theta\varphi)\big|\,dm\lesssim\ell(R)^{n+2}.$$

So to prove the lemma it suffices to show that
$$\int_R\big|\partial_t^{1/2}\big[\varphi(W*\nu-W*\nu(\bar x_Q))\big]\big|\,dm\lesssim \ell(R)^{n+2}.$$
To this end, consider a $C^\infty$ function $\psi_Q$ such that $\chi_Q\leq \psi_Q\leq \chi_{2Q}$, with
$|\nabla_x\psi_Q|\lesssim1/\ell(Q)$ and $|\partial_t\psi_Q|\lesssim1/\ell(Q)^2$, and for any function
$F:\R^{n+1}\to\R$ consider the ``smooth mean" with respect to $\psi_Q$ defined by
$$m_{\psi_Q}(F) = \frac{\int F\,\psi_Q\,dm}{\int\psi_Q\,dm}.$$
Observe that for arbitrary functions $f,g:\R\to\R$, we have
$$\partial_t^{1/2}(f\,g)(t) = g(t)\,\partial_t^{1/2}f(t) + f(t)\,\partial_t^{1/2}g(t) 
+ \int \frac{\big(f(s)-f(t)\big) \big(g(s)-g(t)\big)}{|s-t|^{3/2}}\,ds.$$
Applying this with $f=\vphi(x,\cdot)$ and $g= W*\nu(x,\cdot)-W*\nu(\bar x_Q)$, we get
\begin{align*}
\partial_t^{1/2}\big[ &\varphi (W*\nu-W*\nu (\bar x_Q))\big](x,t)\\
 & =
\big(W*\nu(x,t)-W*\nu(\bar x_Q)\big) \,\partial_t^{1/2}\vphi(x,t) + \vphi(x,t)\,\partial_t^{1/2}W*\nu(x,t)\\
& \quad
+ \int \frac{\big(\vphi(x,s)-\vphi(x,t)\big) \,\big(W*\nu(x,s)-W*\nu(x,t)\big)}{|s-t|^{3/2}}\,ds\\
& =
\big(W*\nu(x,t)-W*\nu(\bar x_Q)\big) \,\partial_t^{1/2}\vphi(x,t) + \vphi(x,t)\,\big(\partial_t^{1/2}W*\nu(x,t) - m_{\psi_Q}(\partial_t^{1/2}W*\nu)\big)\\
& \quad
+ \int \frac{\big(\vphi(x,s)-\vphi(x,t)\big) \,\big(W*\nu(x,s)-W*\nu(x,t)\big)}{|s-t|^{3/2}}\,ds + 
\vphi(x,t)\,m_{\psi_Q}(\partial_t^{1/2}W*\nu)\\
& = A(\bar x)+B(\bar x)+C(\bar x)+D(\bar x).
\end{align*}

To estimate $A(\bar x)$, observe first that $\partial_t^{1/2}\vphi(x,t)$ vanishes unless $x\in Q_1$. 
In the case $x\in Q_1$, $t\in 2I_Q$, by the smoothness of $\vphi$ we have
$$|\partial_t^{1/2}\vphi(x,t)| \leq \int \frac{|\vphi(x,s) - \vphi(x,t)|}{|s-t|^{3/2}}\,ds
\lesssim \frac1{\ell(Q)^2} \int_{2I_Q} \frac{|s - t|}{|s-t|^{3/2}}\,ds + \int_{\R\setminus 2I_Q} \frac{1}{|s-t|^{3/2}}\,ds \lesssim \frac1{\ell(Q)}.$$
In the case $x\in Q_1$, $t\not\in 2I_Q$, we have
$$|\partial_t^{1/2}\vphi(x,t)| \leq \int \frac{|\vphi(x,s)|}{|s-t|^{3/2}}\,ds
\approx \frac1{|t-t_Q|^{3/2}}\, \int |\vphi(x,s)|\,ds \lesssim  \frac{\ell(Q)^2}{|t-t_Q|^{3/2}},$$
where $\bar x_Q=(x_Q,t_Q)$.
So in any case, 
$$|\partial_t^{1/2}\vphi(x,t)| \lesssim  \frac{\ell(Q)^2}{\ell(Q)^3+|t-t_Q|^{3/2}}.$$
Then, using the Lip and Lip $1/2$ conditions on $x$ and $t$ of $W*\nu$, we infer that, for $x\in Q_1$,
$$|A(\bar x)|\lesssim \frac{\ell(Q)^2 \,(\ell(Q) + |t-t_Q|^{1/2})}{\ell(Q)^3+|t-t_Q|^{3/2}}.$$
Therefore,
\begin{align*}
\int_R |A(\bar x)|\,d\bar x & \lesssim \int_{x\in Q_1}\int_{|t-t_Q|\leq 2\ell(R)^2} 
\frac{\ell(Q)^2 \,(\ell(Q) + |t-t_Q|^{1/2})}{\ell(Q)^3+|t-t_Q|^{3/2}}\,dt \\
& \lesssim \ell(Q)^{n+2}\,
\bigg(1+\log\frac{\ell(R)}{\ell(Q)}\bigg)\lesssim \ell(R)^{n+2}.
\end{align*}

To estimate the $L^1$ norm of the term $B$ we just use the fact that $\partial_t^{1/2}W*\nu$
is in the parabolic BMO space and that $|m_{\psi_Q}(\partial_t^{1/2}W*\nu) - m_{Q}(\partial_t^{1/2}W*\nu)
\big|\lesssim \|\partial_t^{1/2} W*\nu\|_{*,p} \leq 1$:
\begin{align*}
\int_R \big|B(\bar x)|\,d\bar x & = \int_R |\vphi(\bar x)\,\big(\partial_t^{1/2}W*\nu(\bar x) - m_{\psi_Q}(\partial_t^{1/2}W*\nu)\big)\big|\,d\bar x\\
& \lesssim \int_Q |\partial_t^{1/2}W*\nu(\bar x) - m_{\psi_Q}(\partial_t^{1/2}W*\nu)\big|\,d\bar x \lesssim \ell(Q)^{n+2}\leq \ell(R)^{n+2}.
\end{align*}

We are now left with $C(\bar x) + D(\bar x)$.
First we split
\begin{align*}
C(\bar x) & = \int \frac{\big(\vphi(x,s)-\vphi(x,t)\big) \,\big(W*\nu(x,s)-W*\nu(x,t)\big)}{|s-t|^{3/2}}\,ds
\\
&= \int_{|s-t|\leq \ell(Q)^2}\cdots + \int_{|s-t|>\ell(Q)^2}\cdots= C_1(\bar x) + C_2(\bar x).
\end{align*}
To estimate $C_1(\bar x)$ we use the smoothness of $\vphi$ and the Lip$(1/2)$ condition of $W*\nu$ in $t$:
$$|C_1(\bar x)|\lesssim \int_{|s-t|\leq \ell(Q)^2}  \frac{\ell(Q)^{-2}\,|s- t| \,|s-t|^{1/2}}{|s-t|^{3/2}}\,ds
\lesssim 1,$$
so that $\int_R |C_1(\bar x)|\,d\bar x\lesssim \ell(R)^{n+2}$.

Concerning $C_2(\bar x)$, we have
\begin{align*}
C_2(\bar x) & = 
\int_{|s-t|>\ell(Q)^2} \frac{W*\nu(x,s)-W*\nu(x,t)}{|s-t|^{3/2}}\,\vphi(x,s)\,ds\\
&\quad - \int_{|s-t|>\ell(Q)^2} \frac{W*\nu(x,s)-W*\nu(x,t)}{|s-t|^{3/2}}\,ds \,\vphi(x,t)\\
& = C_{2,1}(\bar x) - C_{2,2}(\bar x).
\end{align*}
Using again the Lip$(1/2)$ condition of $W*\nu$ in $t$, and the fact that 
$|\vphi(x,\cdot)|\lesssim \chi_{I_Q}$, we obtain
$$|C_{2,1}(\bar x)|\lesssim \int_{|s-t|>\ell(Q)^2} \frac1{|s-t|}\,\vphi(x,s)\,ds\lesssim \frac1{\ell(Q)^2}
\int\vphi(x,s)\,ds \lesssim 1,$$
and so $\int_R |C_{2,1}(\bar x)|\,d\bar x\lesssim \ell(R)^{n+2}$.

By the estimates above, we have
$$\int_R |\partial_t^{1/2} W*(\vphi\nu)|\,dm  \lesssim \ell(R)^{n+2} + 
\int_R |- C_{2,2}(\bar x) + D(\bar x)|\,d\bar x,$$
where
$$C_{2,2}(\bar x)= \int_{|s-t|>\ell(Q)^2} \frac{W*\nu(x,s)-W*\nu(x,t)}{|s-t|^{3/2}}\,ds \,\vphi(x,t),
\qquad D(\bar x) = 
m_{\psi_Q}(\partial_t^{1/2}W*\nu)\, \vphi(x,t).$$
So to conclude the prove of the lemma it suffices to show that
\begin{equation}\label{eqfii3}
\bigg|m_{\psi_Q}(\partial_t^{1/2}W*\nu) - \int_{|s-t|>\ell(Q)^2} \frac{W*\nu(x,s)-W*\nu(x,t)}{|s-t|^{3/2}}\,ds\bigg|\lesssim 1\quad \mbox{ for all $\bar x\in Q$.}
\end{equation}
To this end, first we turn our attention to the term $m_{\psi_Q}(\partial_t^{1/2}W*\nu)$. As above, for any
$\bar y = (y,u)\in\R^{n+1}$ we split
\begin{align*}
\partial_t^{1/2}W*\nu(\bar y) &= \!\int_{|s-u|\leq \ell(Q)^2}\!\!\! \!\!\frac{W*\nu(y,s) - W*\nu(y,u)}{|s-u|^{3/2}}\,ds + \int_{|s-u|> \ell(Q)^2}\!\! \!\!\!\frac{W*\nu(y,s) - W*\nu(y,u)}{|s-u|^{3/2}}\,ds\\
& =:F_1(\bar y) + F_2(\bar y).
\end{align*}
Observe that the kernel
$$K_y(s,u) = \chi_{|s-u|\leq \ell(Q)^2}\,\frac{W*\nu(y,s) - W*\nu(y,u)}{|s-u|^{3/2}},$$
is antisymmetric, and thus
\begin{align*}
m_{\psi_Q}F_1 &= \frac1{\|\psi_Q\|_1} \iiint K_y(s,u)\,\psi_Q(y,u)\,ds\,dy\,du \\& = 
-\frac1{\|\psi_Q\|_1} \iiint K_y(s,u)\,\psi_Q(y,s)\,du\,dy\,ds \\
&=\frac1{2\,\|\psi_Q\|_1} \iiint K_y(s,u)\,(\psi_Q(y,u)-\psi_Q(y,s))\,du\,dy\,ds.
\end{align*}
Hence, by the smoothness of $\psi_Q$ and the Lip $1/2$ condition of $W*\nu$ in $t$,
\begin{align*}
|m_{\psi_Q}F_1| & \leq
\frac1{2\,\|\psi_Q\|_1} \iiint_{|s-u|\leq \ell(Q)^2} \frac{|W*\nu(y,s) - W*\nu(y,u)|}{|s-u|^{3/2}}\,|\psi_Q(y,u)-\psi_Q(y,s)|\,du\,dy\,ds\\
&\lesssim 
\frac1{\ell(Q)^{n+2}} \iiint_{
\begin{subarray}{l} 
y\in 2Q_1\\
|s-u|\leq \ell(Q)^2\\ 
u\in 4I_Q\end{subarray}}
 \frac{|s-u|^{1/2}}{|s-u|^{3/2}}\,\frac{|u-s|}{\ell(Q)^2}\,du\,dy\,ds\lesssim 1.
\end{align*}

To prove \rf{eqfii3}, it remains to show that
\begin{equation}\label{eqdk34}
\bigg|m_{\psi_Q}F_2 - \int_{|s-t|>\ell(Q)^2} \frac{W*\nu(x,s)-W*\nu(x,t)}{|s-t|^{3/2}}\,ds\bigg|\lesssim 1\quad \mbox{ for all $\bar x\in Q$.}
\end{equation}
Clearly, it suffices to prove that
$$\bigg|F_2(\bar y) - \int_{|s-t|>\ell(Q)^2} \frac{W*\nu(x,s)-W*\nu(x,t)}{|s-t|^{3/2}}\,ds\bigg|
= \big|F_2(\bar y) - F_2(\bar x)\big|\lesssim 1\quad \mbox{ for all $\bar x\in Q$ and $\bar y\in 2Q$.}$$
We denote $A_t=\{s\in\R:|s-t|>\ell(Q)^2\}$, and analogously $A_u$. Then we split
\begin{align*}
\big|F_2(\bar y) - F_2(\bar x)\big| &\leq \int_{A_u\setminus A_t} 
\frac{|W*\nu(y,s)-W*\nu(y,u)|}{|s-u|^{3/2}}\,ds  + 
\int_{A_t\setminus  A_u} 
\frac{|W*\nu(x,s)-W*\nu(x,t)|}{|s-t|^{3/2}}\,ds \\
&\quad +\int_{A_u\cap A_t}  \bigg|\frac{W*\nu(x,s)-W*\nu(x,t)}{|s-t|^{3/2}} - \frac{W*\nu(y,s)-W*\nu(y,u)}{|s-u|^{3/2}}\bigg|
\,ds\\
& = I_1 + I_2 + I_3.
\end{align*}
Using the Lip $1/2$ condition of $W*\nu$ in $t$ and the fact that $|s-u|\approx\ell(Q)^2$ in 
$A_u\setminus A_t$ and $|s-t|\approx\ell(Q)^2$ in 
$A_t\setminus A_u$, it is immediate to check that
$$I_1 + I_2 \lesssim1.$$
Concerning $I_3$, by the triangle inequality,
\begin{align*}
I_3 & \leq 
\int_{A_u\cap A_t}  \bigg|\frac1{|s-t|^{3/2}} - \frac1{|s-u|^{3/2}}\bigg|\,\big|W*\nu(x,s)-W*\nu(x,t)\big| \,ds \\
& \quad +
\int_{A_u\cap A_t} \frac{\big|W*\nu(x,s)-W*\nu(x,t) - W*\nu(y,s) + W*\nu(y,u)\big|}{|s-u|^{3/2}}
\,ds\\
& = I_{3,1} + I_{3,2}.
\end{align*}
To estimate $I_{3,1}$ we take into account that 
$$\bigg|\frac1{|s-t|^{3/2}} - \frac1{|s-u|^{3/2}}\bigg|\lesssim \frac{|t-u|}{|s-t|^{5/2}}$$
in the domain of integration and we use the Lip $1/2$ condition on $W*\nu$:
$$I_{3,1}\lesssim \int_{|s-t|>\ell(Q)^2}  \frac{|t-u|}{|s-t|^{5/2}}\,|s-t|^{1/2} \,ds
\lesssim 1.$$
Finally we deal with $I_{3,2}$:
$$I_{3,2}\leq \int_{|s-t|>\ell(Q)^2} \frac{\big|W*\nu(x,s) - W*\nu(y,s)\big| + \big|W*\nu(y,u)-W*\nu(x,t)\big|}{|s-t|^{3/2}}
\,ds.$$
By the Lipschitz in $x$ and Lip $1/2$ in $t$ conditions of $W*\nu$, we derive
$$\big|W*\nu(x,s) - W*\nu(y,s)\big| + \big|W*\nu(y,u)-W*\nu(x,t)\big|\lesssim \ell(Q).$$
Therefore,
$$I_{3,2}\lesssim \int_{|s-t|>\ell(Q)^2} \frac{\ell(Q)}{|s-t|^{3/2}}
\,ds\lesssim1.$$
Together with the preceding estimates for $I_1$, $I_2$, $I_{3,1}$, this shows that
$$\big|F_2(\bar y) - F_2(\bar x)\big|\lesssim 1\quad \mbox{ for all $\bar x \in Q,$ and $\bar y\in 2Q$,}$$
which proves \rf{eqdk34} and concludes the proof of the lemma.
\end{proof}
\vv

\begin{lemma}\label{lembmo}
Let $Q\subset\R^{n+1}$ be a parabolic cube and let 
 $\nu$ be a distribution supported in $\R^{n+1}\setminus 4Q$ 
 with upper parabolic $1$-growth of degree $n+1$ and
 such that 
$$\|\nabla_x W*\nu\|_\infty \leq 1,\qquad \| W*\nu\|_{Lip_{1/2,t}} \leq 1.$$
Then,
$$\int_Q |\partial_t^{1/2} W*\nu - m_Q(\partial_t^{1/2} W*\nu)|\,dm  \lesssim \ell(Q)^{n+2}.$$
\end{lemma}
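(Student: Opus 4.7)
The plan is to prove the pointwise oscillation bound
$$|\partial_t^{1/2}W*\nu(\bar x)-\partial_t^{1/2}W*\nu(\bar x_Q)|\lesssim 1\qquad\text{for all }\bar x\in Q,$$
where $\bar x_Q$ denotes the centre of $Q$. The conclusion follows at once by integrating over $Q$, since $|m_Q(\partial_t^{1/2}W*\nu)-\partial_t^{1/2}W*\nu(\bar x_Q)|$ is controlled by the same oscillation and $m(Q)=\ell(Q)^{n+2}$.

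Write $f:=W*\nu$. The key observation is that, since $\supp\nu\subset\R^{n+1}\setminus 4Q$, the distribution $\nu$ vanishes on $4Q$, so $\Theta f=0$ there and $f\in C^\infty(4Q)$ by hypoellipticity of $\Theta$. The hypotheses $\|\nabla_x f\|_\infty\leq 1$ and $\|f\|_{Lip_{1/2,t}}\leq 1$ bound the parabolic oscillation of $f$ on $4Q$ by $C\ell(Q)$, so after subtracting a suitable constant $c$ (which does not affect $\partial_t^{1/2}f$), $\|f-c\|_{L^\infty(4Q)}\lesssim\ell(Q)$. Standard interior regularity for the heat equation then yields, for every $\bar z\in 2Q$ and every multi-index $\alpha$ and integer $j\geq 0$,
$$|\partial_x^\alpha\partial_t^j f(\bar z)|\lesssim\ell(Q)^{1-|\alpha|-2j}.$$
In particular $\|\partial_t f\|_{L^\infty(2Q)}\lesssim\ell(Q)^{-1}$, $\|\partial_t^2 f\|_{L^\infty(2Q)}\lesssim\ell(Q)^{-3}$, and the parabolic mean value theorem gives $|\partial_t f(\bar x)-\partial_t f(\bar x_Q)|\lesssim\ell(Q)^{-1}$ for $\bar x\in Q$, with analogous bounds for higher derivatives.

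With these estimates in hand, I change variables $s=t+u$ in $\partial_t^{1/2}f(\bar x)$ and $s=t_Q+u$ in $\partial_t^{1/2}f(\bar x_Q)$ and write
$$\partial_t^{1/2}f(\bar x)-\partial_t^{1/2}f(\bar x_Q)=\int_\R\frac{[f(x,t+u)-f(x,t)]-[f(x_Q,t_Q+u)-f(x_Q,t_Q)]}{|u|^{3/2}}\,du,$$
and split the integral at $|u|=\ell(Q)^2$. For $|u|\leq\ell(Q)^2$, I Taylor-expand in $u$ around $0$: the interior regularity bounds show that the numerator is at most $C(|u|\,\ell(Q)^{-1}+|u|^2\ell(Q)^{-3}+|u|^3\ell(Q)^{-5})$, so the integrand is bounded by $|u|^{-1/2}\ell(Q)^{-1}+|u|^{1/2}\ell(Q)^{-3}+|u|^{3/2}\ell(Q)^{-5}$, which integrates to $O(1)$ over $[-\ell(Q)^2,\ell(Q)^2]$. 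For $|u|>\ell(Q)^2$, the Lipschitz in $x$ and Lip $1/2$ in $t$ hypotheses together with $|\bar x-\bar x_Q|_p\leq\ell(Q)$ give that the numerator is $\lesssim\ell(Q)$, and $\ell(Q)\int_{|u|>\ell(Q)^2}|u|^{-3/2}\,du=O(1)$.

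The main obstacle lies in the interior regularity step: one must be careful because $f=W*\nu$ is a priori only controlled in parabolic Lipschitz norm (and not in $L^\infty$), so one needs to subtract the constant $c$ on $4Q$ to apply the standard $L^\infty$-based interior estimates for caloric functions. A secondary technical point is the convergence of the integral representation of $\partial_t^{1/2}f$ near $\bar x\in Q$; however, since $f$ is $C^\infty$ on $4Q$, the displayed difference is absolutely convergent once one takes into account that the leading linear-in-$u$ Taylor term has an absolutely integrable contribution under the bound just obtained (and in fact vanishes by antisymmetry on the symmetric interval).
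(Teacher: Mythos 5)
Your proposal is correct and takes a genuinely different route from the paper.

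The paper proves the pointwise oscillation estimate by splitting into a purely spatial displacement ($\bar x=(x,t)$ vs.\ $\bar y=(y,t)$) and a purely temporal one ($\bar x=(x,t)$ vs.\ $\bar y=(x,u)$), and in each case splitting the $s$-integral at $|s-t|\approx\ell(Q)^2$. For the near part the paper needs the bound $\|\partial_t W*\nu\|_{\infty,3Q}\lesssim\ell(Q)^{-1}$, which it derives by decomposing $\nu$ over parabolic annuli with a smooth partition of unity, testing against rescaled admissible functions, and invoking the upper parabolic growth hypothesis together with explicit decay estimates for $\partial_t W$, $\nabla_x\partial_t W$, $\Delta\partial_t W$, and $\partial_t^2 W$; the far part uses the Lipschitz/Lip$(1/2)$ hypotheses. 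You instead treat the combined difference in one integral and obtain all the needed time-derivative bounds (indeed all parabolic derivative bounds of every order) from a single application of hypoellipticity plus interior Schauder-type estimates for caloric functions on past cylinders, after normalizing $\|f-c\|_{L^\infty(4Q)}\lesssim\ell(Q)$ using the Lipschitz hypotheses. Your route is shorter, more conceptual, and notably does not use the upper parabolic $1$-growth hypothesis at all; the paper's route is more elementary and self-contained, relying only on the explicit kernel bounds established in Lemma \ref{estimates}. Two small remarks: (i) when invoking interior estimates one should note they hold on \emph{backward} parabolic cylinders $B(z,r)\times(v-r^2,v]$, which indeed fit inside $4Q$ for $\bar z\in 2Q$ with a fixed comparable radius, so the invocation is legitimate; (ii) for the region $|u|\leq\ell(Q)^2$ the first-order bound $|N(u)|\lesssim|u|/\ell(Q)$ (from $\|\nabla_x\partial_t f\|_{\infty,2Q}\lesssim\ell(Q)^{-2}$ and $\|\partial_t^2 f\|_{\infty,2Q}\lesssim\ell(Q)^{-3}$) already gives an integrable majorant, so the higher Taylor terms you record are harmless but unnecessary.
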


\begin{proof}
Let $Q\subset \R^{n+1}$ be a fixed parabolic cube.  To prove the lemma,  it is enough to show that
\begin{equation}
|(\partial_t^{1/2} W*\nu)(\bar x) - 
(\partial_t^{1/2} W*\nu)(\bar y)|\lesssim 1\label{eqdif1}
\end{equation}
for $\bar x,\bar y\in \R^{n+1}$ in the following two cases:
\begin{itemize}
\item Case 1:  $\bar x,\bar y\in Q$ of the form $\bar x= (x,t)$, $\bar y= (y,t)$.
\item Case 2:  $\bar x,\bar y\in Q$ of the form $\bar x= (x,t)$, $\bar y= (x,u)$.
\end{itemize}

\vv
\noi {\em Proof of \rf{eqdif1} in Case 1.}
We split
\begin{align*}
|&\partial_t^{1/2} W* \nu(x,t) - \partial_t^{1/2} W* \nu(y,t)|\\
 & =
\left|\int \frac{W*\nu(x,s) - W*\nu(x,t)}{|s-t|^{3/2}}\,ds -\!
\int \frac{W*\nu(y,s) - W*\nu(y,t)}{|s-t|^{3/2}}\,ds\right|\\
& \leq
\int_{|s-t|\leq4\ell(Q)^2} \frac{|W*\nu(x,s) - W*\nu(x,t)|}{|s-t|^{3/2}}\,ds\\
&\quad+\int_{|s-t|\leq4\ell(Q)^2} \frac{|W*\nu(y,s) - W*\nu(y,t)|}{|s-t|^{3/2}}\,ds
\\
&\quad+\!\int_{|s-t|>4\ell(Q)^2}\!\!\!\! \frac{|W\!*\nu(x,s) - W\!*\nu(x,t) - W\!*\nu(y,s) + W\!*\nu(y,t)|}{|s-t|^{3/2}}\,ds
\\
& =: A_1  + A_2 + B.
\end{align*}

We will estimate the term $A_1$ now. For $s,t$ such that $|s-t|\leq4\ell(Q)^2$, we write
$$|W*\nu(x,s) - W*\nu(x,t)|\leq |s-t|\,\|\partial_t W*\nu\|_{\infty,3Q}.$$
We claim that 
\begin{equation}\label{claim2}
\|\partial_t W*\nu\|_{\infty,3Q}\lesssim \frac1{\ell(Q)}.
\end{equation}
Once claim \eqref{claim2} is proved, we get that
 $$|W*\nu(x,s) - W*\nu(x,t)|\lesssim \frac{|s-t|}{\ell(Q)}.$$
 Plugging this into the integral that
defines $A_1$, we obtain
$$A_1\lesssim \int_{|s-t|\leq4\ell(Q)^2} \frac{|s-t|}{\ell(Q)\,|s-t|^{3/2}}\,ds 
\lesssim \frac{(\ell(Q)^2)^{1/2}}{\ell(Q)}=1.$$
By exactly the same arguments, just writing $y$ in place of $x$ above, we deduce also that
$$A_2\lesssim 1.$$
Concerning the term $B$, we write
\begin{align*}
B &\leq 
\int_{|s-t|>4\ell(Q)^2} \frac{|W*\nu(x,s) - W*\nu(y,s)|}{|s-t|^{3/2}}\,ds
+
\int_{|s-t|>4\ell(Q)^2} \frac{|W*\nu(x,t) - W*\nu(y,t)|}{|s-t|^{3/2}}\,ds.
\end{align*}
Then,
$$|W*\nu(x,s) - W*\nu(y,s)|\leq \|\nabla_x W*\nu\|_\infty 
\,|x-y|\lesssim \ell(Q).$$
The same estimate holds replacing $(x,s)$ and $(y,s)$ by $(x,t)$ and $(y,t)$.
Hence,
$$B\lesssim \int_{|s-t|>4\ell(Q)^2} \frac{\ell(Q)}{|s-t|^{3/2}}\,ds \lesssim \frac{\ell(Q)}{(\ell(Q)^2)^{1/2}}\lesssim 1.$$
So, once claim \rf{claim2} is proved, \rf{eqdif1} holds in Case 1.

To show \rf{claim2}, we split $\R^{n+1}\setminus 4Q$ into parabolic annuli $A_k=2^{k+1}Q\setminus 2^kQ$ and consider $C^2$ functions $\wt\chi_k$, supported on $\frac32A_k$ which equal 1 on $A_k$, 
vanish on $(\frac32A_k)^c$ and satisfy
$$\sum_{k\geq 3}\wt\chi_k  = 1 \quad\mbox{ in $\R^{n+1}\setminus 4Q$}$$
and
$$\|\nabla_x\wt\chi_k\|_\infty\lesssim\frac1{2^k\ell(Q)}, \qquad
\|\nabla^2_x \wt\chi_k\|_\infty + \|\partial_t \wt\chi_k\|_\infty 
\lesssim \frac1{(2^k\ell(Q))^2}.$$
Then, for each $\bar z=(z,v)\in 3Q$, $$|\partial_t W*\nu(\bar z)|\leq\sum_{k\geq 3}|\partial_t W*(\wt\chi_k\nu)(\bar z)|.$$ Claim \rf{claim2} will be proved if we show that for each $k\geq 2$,
\begin{equation}\label{k}
|\partial_t W*(\wt\chi_k\nu)(\bar z)|\lesssim\frac{2^{-k}}{\ell(Q)}.
\end{equation}
Write $$|\partial_t W*(\wt\chi_k\nu)(\bar z)|=|\langle \nu, \wt\chi_k\partial_t W(\bar z-\cdot)\rangle|=|\langle \nu, \psi_k\rangle|,$$ the last equality being a definition of $\psi_k $. 

To estimate \rf{k}, 
we want to use the upper parabolic growth of $\nu$.
Therefore we have to study the admissibility conditions \rf{admis} of 
$\psi_k$ for each $k$. This means we have to estimate the norms $\|\nabla_x\psi_k\|_\infty$ and $\|\Delta\psi_k\|_{\infty}+\|\partial_t\psi_k\|_\infty$.  Write
\begin{equation}\label{prod}
\nabla_x\psi_k=\nabla_x\wt\chi_k\,\partial_tW(\bar z-\cdot)+\nabla_x\partial_tW(\bar z-\cdot)\,\wt\chi_k.
\end{equation}
The estimate of the $L^\infty-$norm of the first term in \rf{prod} comes from  $\|\nabla_x\wt\chi_k\|_\infty\lesssim(2^k\ell(Q))^{-1}$ and Lemma \ref{estimates}, together with the fact that for $\bar x\in Q$ and $\bar z\in A_k$ we have
\begin{equation}\label{partialtW}
|\partial_t W(\bar x-\bar z)|\lesssim\frac1{|\bar x-\bar z|_p^{n+2}}\approx\frac1{(2^k\ell(Q))^{n+2}}.
\end{equation}
For the second term in \rf{prod} we have to compute $\nabla_x\partial_tW$. Arguing as in the proof of Lemma \ref{estimates} one can show that
\begin{equation}\label{nablapartialtW}
|\nabla_x\partial_tW(\bar x)|\lesssim\frac1{\max(t^{(n+3)/2},|x|^{n+3})}\approx\frac1{|\bar x|_p^{n+3}}.
\end{equation}
Putting these estimates together we get $$\|\nabla_x\psi_k\|_\infty\lesssim\frac1{(2^k\ell(Q))^{n+3}}.$$
To estimate $\|\Delta\psi_k\|_{\infty}$ write 
\begin{equation}
\Delta\psi_k=\Delta\wt\chi_k\partial_tW(\bar z-\cdot)+2\nabla_x\wt\chi_k\nabla_x\partial_tW(\bar z-\cdot)+\wt\chi_k\Delta\partial_tW(\bar z-\cdot).
\end{equation}
Following the proof of Lemma \ref{estimates} one can deduce that 
\begin{equation}\label{DeltaW}
|\Delta\partial_tW(\bar x)|\lesssim\frac1{\max(t^{(n+4)/2},|x|^{n+4})}\approx\frac1{|\bar x|_p^{n+4}}.
\end{equation}
Hence, using the estimates $\|\Delta\wt\chi_k\|_\infty\lesssim(2^k\ell(Q))^{-2}$, $\|\nabla_x\wt\chi_k\|_\infty\lesssim(2^k\ell(Q))^{-1}$, \eqref{nablapartialtW} and \eqref{DeltaW} one obtains
$$\|\Delta\psi_k\|_{\infty}\lesssim\frac1{(2^k\ell(Q))^{n+4}}.$$
To estimate $\|\partial_t\psi_k\|_\infty$ write
\begin{equation*}
\partial_t\psi_k=\partial_t\wt\chi_k\,\partial_tW(\bar z-\cdot)+\partial^2_tW(\bar z-\cdot)\,\wt\chi_k.
\end{equation*}
The first term above is estimated by using $\|\partial_t\wt\chi_k\|_\infty\leq(2^{k}\ell(Q))^{-2}$ and \rf{partialtW}. For the second term we argue as in the proof of Lemma \ref{estimates} and obtain
$$|\partial^2_t W(\bar x-\bar z)|\lesssim\frac1{|\bar x-\bar z|_p^{n+4}}\approx\frac1{(2^k\ell(Q))^{n+4}},$$
for $\bar x\in Q$ and $\bar z\in A_k$. Therefore $$\|\partial_t\psi_k\|_\infty\lesssim\frac1{(2^k\ell(Q))^{n+4}}.$$
Hence, by Lemma \ref{lemgrow},
$$|(\partial_t W*\wt\chi_k\nu)(\bar z)|=\frac1{(2^k\ell(Q))^{n+2}}|\langle \nu, (2^k\ell(Q))^{n+2}\psi_k\rangle|\lesssim\frac{(2^k\ell(Q))^{n+1}}{(2^k\ell(Q))^{n+2}}=\frac{2^{-k}}{\ell(Q)},$$
which concludes the proof of claim \rf{claim2} and of \rf{eqdif1} in Case 1.

\vv
\noi {\em Proof of \rf{eqdif1} in Case 2.}
As in Case 1 we write
\begin{align*}
|&\partial_t^{1/2} W*\nu(x,t) - \partial_t^{1/2} W*\nu(x,u)|\\
 & =
\left|\int \frac{W*\nu(x,s) - W*\nu(x,t)}{|s-t|^{3/2}}\,ds -\!
\int \frac{W*\nu(x,s) - W*\nu(x,u)}{|s-u|^{3/2}}\,ds\right|\\
& \leq
\int_{|s-t|\leq4\ell(Q)^2} \frac{|W*\nu(x,s) - W*\nu(x,t)|}{|s-t|^{3/2}}\,ds\\
&\quad+
\int_{|s-t|\leq4\ell(Q)^2} \frac{|W*\nu(x,s) - W*\nu(x,u)|}{|s-u|^{3/2}}\,ds
\\
&\quad+\int_{|s-t|>4\ell(Q)^2} \bigg|\frac{W*\nu(x,s) - W*\nu(x,t)}{|s-t|^{3/2}}-\frac{W*\nu(x,s) - W*\nu(x,u)}{|s-u|^{3/2}}\bigg| ds\\
& =: A_1'  + A_2' + B'.
\end{align*}
The terms $A_1'$ and $A_2'$ can be estimated exactly in the same way as the terms $A_1$ and $A_2$ in Case 1, so that
$$A_1' + A_2'\lesssim 1.$$

Concerning $B'$ we have
\begin{align*}
B' & \leq \int_{|s-t|>4\ell(Q)^2} \bigg|\frac{W*\nu(x,s) - W*\nu(x,t)}{|s-t|^{3/2}}-\frac{W*\nu(x,s) - W*\nu(x,u)}{|s-u|^{3/2}}\bigg| ds\\
&\leq 
 \int_{|s-t|>4\ell(Q)^2} 
\left|\frac1{|s-t|^{3/2}} - \frac1{|s-u|^{3/2}}\right|\,
 \big|W*\nu(x,s) - W*\nu(x,t)\big| \,ds\\
&\quad 
+ \int_{|s-t|>4\ell(Q)^2} 
\frac1{|s-u|^{3/2}}\,
 \big|W*\nu(x,t) - W*\nu(x,u)\big| \,ds.
\end{align*}
Taking into account that, for $|s-t|>4\ell(Q)^2$,
$$\left|\frac1{|s-t|^{3/2}} - \frac1{|s-u|^{3/2}}\right|\lesssim \frac{|t-u|}{|s-t|^{5/2}}
\lesssim \frac{\ell(Q)^2}{|s-t|^{5/2}}$$
and that  $ \| W*\nu\|_{Lip_{1/2,t}} \lesssim 1$,
we deduce 
$$B' \lesssim 
 \int_{|s-t|>4\ell(Q)^2} \frac{\ell(Q)^2}{|s-t|^{5/2}}
\,|s - t|^{1/2} \,ds\\
+ 
 \int_{|s-t|>4\ell(Q)^2}\frac1{|s-u|^{3/2}}\,|t-u|^{1/2} \,ds \lesssim 1.
$$
\end{proof}

\vv

We will also need the following (easy) technical result.

\begin{lemma}\label{lemcre3} 
Let $\nu$ be a distribution in $\R^{n+1}$ which has upper parabolic $1$-growth of degree $n+1$. Let $\vphi$ be a $C^2$ function admissible for some parabolic cube $Q$. Then $\vphi\nu$ has upper $C$-parabolic growth of degree $n+1$, for some absolute constant $C>0$
\end{lemma}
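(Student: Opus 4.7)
The plan is to verify the defining inequality of upper parabolic growth for $\vphi\nu$ directly. Let $R\subset \R^{n+1}$ be an arbitrary parabolic cube and let $\psi$ be a $C^2$ function admissible for $R$. We must show that
$$|\langle \vphi\nu,\psi\rangle| = |\langle \nu,\vphi\psi\rangle| \lesssim \ell(R)^{n+1},$$
and the strategy is to check that, up to an absolute multiplicative constant, the product $\vphi\psi$ is itself admissible for one of the two cubes $Q$ or $R$, whichever is smaller. Then the upper parabolic $1$-growth hypothesis on $\nu$ applied to $\vphi\psi$ yields the claim.

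First I would record an auxiliary $L^\infty$ bound for admissible functions. Writing $R=R_1\times I_R$ with $R_1\subset\R^n$ a cube of side $\ell(R)$, the support condition forces $\psi(\cdot,t)$ to vanish outside $R_1$ for each $t$, so integrating $\nabla_x\psi$ along any horizontal segment inside $R_1$ gives $\|\psi\|_\infty\le \ell(R)\,\|\nabla_x\psi\|_\infty \le 1$. The same reasoning applied to $\vphi$ gives $\|\vphi\|_\infty\le 1$. With this in hand, the Leibniz rules
$$\nabla_x(\vphi\psi) = \psi\,\nabla_x\vphi + \vphi\,\nabla_x\psi,\quad \Delta(\vphi\psi) = \psi\,\Delta\vphi + 2\,\nabla_x\vphi\cdot\nabla_x\psi + \vphi\,\Delta\psi,\quad \partial_t(\vphi\psi) = \psi\,\partial_t\vphi + \vphi\,\partial_t\psi$$
combined with the admissibility constants of $\vphi$ (for $Q$) and $\psi$ (for $R$) produce the bounds
$$\|\nabla_x(\vphi\psi)\|_\infty \le \tfrac1{\ell(Q)}+\tfrac1{\ell(R)},\qquad \|\Delta(\vphi\psi)\|_\infty+\|\partial_t(\vphi\psi)\|_\infty \lesssim \tfrac1{\ell(Q)^2}+\tfrac1{\ell(Q)\ell(R)}+\tfrac1{\ell(R)^2}.$$

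Next I split into two cases. If $\ell(R)\le \ell(Q)$, then $\supp(\vphi\psi)\subset R$ and the inequalities above give $\|\nabla_x(\vphi\psi)\|_\infty\lesssim 1/\ell(R)$ and $\|\Delta(\vphi\psi)\|_\infty+\|\partial_t(\vphi\psi)\|_\infty\lesssim 1/\ell(R)^2$, so $\vphi\psi/C$ is admissible for $R$. Applying the upper parabolic $1$-growth of $\nu$ yields $|\langle\nu,\vphi\psi\rangle|\lesssim \ell(R)^{n+1}$. If instead $\ell(R)>\ell(Q)$, then $\supp(\vphi\psi)\subset Q$ and the analogous reduction shows that $\vphi\psi/C$ is admissible for $Q$, hence $|\langle\nu,\vphi\psi\rangle|\lesssim \ell(Q)^{n+1}\le \ell(R)^{n+1}$. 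In either case we obtain the desired bound with an absolute constant $C$, which is exactly the statement that $\vphi\nu$ has upper parabolic $C$-growth of degree $n+1$. There is no real obstacle here: the whole argument is an application of the product rule plus a case split on the relative sizes of $Q$ and $R$.
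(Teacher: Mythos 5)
Your proof is correct and follows the same approach the paper uses: show that $c\vphi\psi$ is admissible for the smaller of the two cubes $Q$, $R$, then invoke the growth hypothesis on $\nu$. The paper's proof is a one-liner ("it is easy to check that $c\vphi\psi$ is admissible for $S$"), and you have simply spelled out the details it leaves implicit, including the useful auxiliary observation that admissibility already implies $\|\vphi\|_\infty,\|\psi\|_\infty\le 1$.
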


\begin{proof}
Let $\psi$ be a $C^2$ function admissible for some parabolic cube $R$. In the case $\ell(Q)\leq \ell(R)$, let $S=Q$, and otherwise let $S=R$.
 It is easy to check that
$c\vphi\psi$ is admissible for $S$, for some absolute constant $c>0$, and thus
$$|\langle \vphi\nu,\psi\rangle| = |\langle \nu,\vphi\psi\rangle|\lesssim \ell(S)^{n+1}\leq \ell(R)^{n+1}.$$
\end{proof}

\vv

The next lemma, together with Lemma \ref{lemlocnabla}, completes the proof of Theorem \ref{teoloc}.

\begin{lemma}\label{lemlocdt}
Let $\nu$ be a distribution in $\R^{n+1}$ such that 
$$\|\nabla_x W*\nu\|_\infty \leq 1,\qquad \|\partial_t^{1/2} W*\nu\|_{*,p} \leq 1.$$
Then, if $\vphi$ is a $C^2$ function admissible for a parabolic cube $Q$, we have
$$\|\partial_t^{1/2} W*(\vphi\nu)\|_{*,p} \leq 1.$$
\end{lemma}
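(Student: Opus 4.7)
The plan is, for each parabolic cube $R\subset\R^{n+1}$, to prove the oscillation estimate
$\int_R |f - m_R f|\,dm \lesssim \ell(R)^{n+2}$, where $f:=\partial_t^{1/2}W*(\vphi\nu)$;
since $|R|\approx\ell(R)^{n+2}$, this will give the required parabolic BMO bound upon taking the supremum over $R$.
The argument splits according to whether $\ell(R)\ge\ell(Q)$ or $\ell(R)<\ell(Q)$.

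\textbf{Large cubes: $\ell(R)\ge\ell(Q)$.} If $R$ and $Q$ are close, say $\dist_p(R,Q)\le\ell(R)$,
then there is a parabolic cube $R'\supset R\cup Q$ with $\ell(R')\approx\ell(R)$, and Lemma~\ref{keylemma}
applied with cubes $Q\subset R'$ gives $\int_R|f|\,dm\le\int_{R'}|f|\,dm\lesssim\ell(R)^{n+2}$,
which controls the oscillation via $\int_R|f-m_Rf|\,dm\le2\int_R|f|\,dm$. Otherwise
$\supp(\vphi\nu)\subset Q$ lies outside $4R$, and I would appeal to Lemma~\ref{lembmo} directly on $\vphi\nu$.
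The hypotheses of that lemma---upper parabolic growth of $\vphi\nu$, $\|\nabla_xW*(\vphi\nu)\|_\infty\lesssim1$,
and $\|W*(\vphi\nu)\|_{Lip_{1/2,t}}\lesssim1$---follow respectively from Lemma~\ref{lemcre3},
Lemma~\ref{lemlocnabla}, and Lemma~\ref{lem3.3}, using the remark preceding Lemma~\ref{lemlocnabla}
to upgrade the parabolic BMO bound on $\partial_t^{1/2}W*\nu$ into the Lip$(1/2)$-in-$t$ bound for $W*\nu$
required as input to those lemmas.

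\textbf{Small cubes: $\ell(R)<\ell(Q)$.} Here I split $\vphi\nu=\vphi_1\nu+\vphi_2\nu$ with a smooth cutoff $\chi$
adapted to the scale $\ell(R)$: take $\chi\equiv1$ on $8R$, $\chi\equiv0$ off $16R$, with the natural
parabolic bounds $\|\nabla_x\chi\|_\infty\lesssim\ell(R)^{-1}$ and
$\|\Delta\chi\|_\infty+\|\partial_t\chi\|_\infty\lesssim\ell(R)^{-2}$, and set $\vphi_1:=\chi\vphi$, $\vphi_2:=(1-\chi)\vphi$,
$f_i:=\partial_t^{1/2}W*(\vphi_i\nu)$. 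The crucial observation is that $\vphi$ is bounded by an absolute constant
(integrating $\nabla_x\vphi$ across the spatial side of $Q$); since moreover $1/\ell(Q)\le1/\ell(R)$, the product rule
yields $\|\nabla_x\vphi_1\|_\infty\lesssim\ell(R)^{-1}$ and
$\|\Delta\vphi_1\|_\infty+\|\partial_t\vphi_1\|_\infty\lesssim\ell(R)^{-2}$, so $c\vphi_1$ is admissible
for $16R$ for some absolute $c>0$. Lemma~\ref{keylemma} with both cubes equal to $16R$ then gives
$\int_R|f_1|\,dm\le\int_{16R}|f_1|\,dm\lesssim\ell(R)^{n+2}$, hence $\int_R|f_1-m_Rf_1|\,dm\lesssim\ell(R)^{n+2}$.

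For $f_2$, I note that $\supp(\vphi_2\nu)\subset(8R)^c\subset(4R)^c$, and write $\vphi_2\nu=\vphi\nu-\vphi_1\nu$
to transfer the hypotheses of Lemma~\ref{lembmo}: Lemma~\ref{lemcre3} gives upper parabolic growth of both
$\vphi\nu$ and $\vphi_1\nu$ (using admissibility of $\vphi$ for $Q$ and of $c\vphi_1$ for $16R$), while
Lemmas~\ref{lemlocnabla} and~\ref{lem3.3} provide the corresponding $L^\infty$ and Lip$(1/2)$ bounds;
subtraction then gives the required bounds for $\vphi_2\nu$. Lemma~\ref{lembmo}, applied after rescaling by
an absolute constant, yields $\int_R|f_2-m_Rf_2|\,dm\lesssim\ell(R)^{n+2}$, and combining with the bound
for $f_1$ via $|f-m_Rf|\le|f_1-m_Rf_1|+|f_2-m_Rf_2|$ concludes the argument.
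The main obstacle I anticipate is the careful verification that the near piece $\chi\vphi$ is admissible---up to an
absolute constant---for the cube $16R$ uniformly in the ratio $\ell(R)/\ell(Q)$, which relies crucially on the
$L^\infty$ bound for $\vphi$ coming from its admissibility for $Q$, together with the bookkeeping of constants
needed so that the auxiliary lemmas apply to both $\vphi\nu$ and $\vphi_1\nu$ uniformly.
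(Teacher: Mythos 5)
Your decomposition reproduces the paper's proof in essence: near $R$ you invoke Lemma~\ref{keylemma}, far from $R$ you invoke Lemma~\ref{lembmo}, and the hypotheses of the latter are transferred from $\nu$ to the localized distribution via Lemmas~\ref{lemgrow}, \ref{lemcre3}, \ref{lemlocnabla}, \ref{lem3.3} together with the Lip$(1/2)$ upgrade quoted before Lemma~\ref{lemlocnabla}. The paper uses a single cutoff $\wt\chi_{5R}$ at scale $\ell(R)$ in all cases and then discusses admissibility of $\wt\chi_{5R}\vphi$ for $5R$ or for $Q$ according to the size comparison; you organize the case distinction $\ell(R)\gtrless\ell(Q)$ up front, which in the large-cube, near case lets you skip the cutoff altogether and apply Lemma~\ref{keylemma} to $\vphi\nu$ directly on an enlarged cube $R'\supset R\cup Q$. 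That is a genuine, if small, simplification. The verification that $c\,\chi\vphi$ (your $\vphi_1$) is admissible for $16R$ by means of $\|\vphi\|_\infty\le 1$ is exactly the point that also underlies the paper's claim that $c\,\wt\chi_{5R}\vphi$ is admissible for $5R$; your bookkeeping here is correct.

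One constant needs adjusting. In the large-cube case you split at $\dist_p(R,Q)\le\ell(R)$ and claim that otherwise $Q$ lies outside $4R$. This is false as stated: the parabolic thickness of $4R\setminus R$ in the time direction is of order $\ell(R)^2$, so a point at parabolic distance slightly more than $\ell(R)$ from $R$ can still sit inside $4R$; in fact points of $4R$ can be at parabolic distance up to roughly $\sqrt{15/2}\,\ell(R)$ from $R$. The fix is cosmetic: take the near/far threshold to be, say, $Q\cap 8R\ne\varnothing$ (or $\dist_p(R,Q)\le C\ell(R)$ with $C$ chosen so that the complement forces $Q\cap 4R=\varnothing$), and in the near case the cube $R'\supset R\cup Q$ still has $\ell(R')\lesssim\ell(R)$. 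With that corrected, the argument is sound and matches the paper's.
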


\begin{proof}
Let $R\subset\R^{n+1}$ be some parabolic cube. We have to show that
there exists some constant $c_R$
(to be chosen below) such that
$$\int_R |\partial_t^{1/2} W*(\vphi\nu) - c_R|\,dm \lesssim \ell(R)^{n+2}.$$
To this end, we consider a $C^2$ function $\wt\chi_{5R}$ which equals $1$ on $5R$, vanishes in $6R^c$,
and satisfies
$$\|\nabla_x \wt\chi_{5R}\|_\infty \lesssim \frac1{\ell(R)}, \qquad
\|\nabla^2_x \wt\chi_{5R}\|_\infty + \|\partial_t \wt\chi_{5R}\|_\infty 
\lesssim \frac1{\ell(R)^2}.
$$
We also denote $\wt\chi_{5R^c} = 1 - \wt\chi_{5R}$. Then we
split
$$\int_R |\partial_t^{1/2} W*(\vphi\nu) - c_R|\,dm \leq
\int_R |\partial_t^{1/2} W*(\wt\chi_{5R}\vphi\nu)|\,dm + \int_R |\partial_t^{1/2} W*(\wt\chi_{5R^c}\vphi\nu) - c_R|\,dm =: I_1 + I_2.
$$

To estimate $I_2$ we intend to apply Lemma \ref{lembmo}. Notice that $\supp(\wt\chi_{5R^c}\vphi\nu)\subset \overline{5R^c}$. We claim that
\begin{equation}\label{eqclaim1}
\|\nabla_x W*(\wt\chi_{5R^c}\vphi\nu)\|_\infty \lesssim 1,\qquad
\| W*(\wt\chi_{5R^c}\vphi\nu)\|_{Lip_{1/2,t}} \lesssim 1.
\end{equation}
To check this, just write
$$W*(\wt\chi_{5R^c}\vphi\nu) = W*(\vphi\nu) - W*(\wt\chi_{5R}\vphi\nu).$$
Since $\vphi$ is admissible for $Q$, we have
\begin{equation}\label{eqclaim2}
\|\nabla_x W*(\vphi\nu)\|_\infty \lesssim 1,\qquad
\| W*(\vphi\nu)\|_{Lip_{1/2,t}} \lesssim 1.
\end{equation}
Also, in case that $\ell(R)\leq \ell(Q)$, it is easy to check that there exists some absolute constant $c>0$ such that $c\wt\chi_{5R}\vphi$ 
is admissible for $5R$. On the other hand, if $\ell(R)> \ell(Q)$, then  $c\wt\chi_{5R}\vphi$ 
is admissible for $Q$, for some absolute constant $c>0$. So in any case, by Lemmas \ref{lemlocnabla}
and \ref{lem3.3}, 
$$\|\nabla_x W*(\wt\chi_{5R}\vphi\nu)\|_\infty \lesssim 1,\qquad
\| W*(\wt\chi_{5R}\vphi\nu)\|_{Lip_{1/2,t}} \lesssim 1.
$$
Hence, \rf{eqclaim1} follows from \rf{eqclaim2} and the preceding estimates.

On the other hand, by Lemma \ref{lemgrow}, $\nu$ has upper parabolic growth of degree $n+1$, and 
since  $c\wt\chi_{5R}\vphi$ is admissible either for $5R$ or for $Q$,  $\wt\chi_{5R}\vphi\nu$ also has upper parabolic $C$-growth for some absolute constant $C$, by Lemma
\ref{lemcre3}.
Then, from Lemma \ref{lembmo}, choosing $c_R = m_R\big(\partial_t^{1/2} W*(\wt\chi_{5R^c}\vphi\nu)\big)$, 
 we deduce that
$$I_2\lesssim \ell(R)^{n+2}.$$

To estimate $I_1$,  we may assume that $Q\cap 6R\neq\varnothing$, since otherwise $\wt\chi_{5R}\,\vphi
\equiv0$. Next we distinguish two cases. First we assume that $\ell(R)\leq \ell(Q)$, so that 
$c\wt\chi_{5R}\,\vphi$ is admissible for $5R$. Then, from Lemma \ref{keylemma} we derive
$$I_1\leq \int_{5R} |\partial_t^{1/2} W*(\wt\chi_{5R}\vphi\nu)|\,dm\lesssim \ell(R)^{n+2}.$$
In the case $\ell(R)> \ell(Q)$, the fact that $Q\cap 6R\neq\varnothing$ implies that $Q\subset8R$,
and $c\wt\chi_{5R}\,\vphi$ is admissible for $Q$, for some $c\approx1$. 
Then again from Lemma \ref{keylemma} we infer that
$$I_1\leq \int_{8R} |\partial_t^{1/2} W*(\wt\chi_{5R}\vphi\nu)|\,dm\lesssim \ell(R)^{n+2}.$$
Together with the estimate obtained for $I_2$, this proves the lemma.
\end{proof}

\vv

\section{The case when $\nu$ is a positive measure}\label{section-positive}

The main goal of this section will be to prove the following result.

\begin{lemma}\label{propo1}
Let $\mu$ be a measure in $\R^{n+1}$ which has upper parabolic growth of degree $n+1$ with constant $1$ such that
$$\|\nabla_x W*\mu \|_\infty\leq 1.$$
Then
$$\|\partial_t^{1/2} W * \mu\|_{*,p}\lesssim 1.$$
\end{lemma}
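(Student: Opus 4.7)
The plan is to combine a direct heat-semigroup argument (to control $W*\mu$ in the time variable) with the splitting and localization machinery from Section \ref{section-localization}. As a key preliminary, I would first establish $\|W*\mu\|_{Lip_{1/2,t}} \lesssim 1$ from the hypothesis. For $h>0$ and $\bar x=(x,t)$, split
\[
W*\mu(x,t+h) - W*\mu(x,t) = I + J,
\]
where $I$ collects the contribution from $\{u<t\}$ and $J$ the contribution from $\{t\le u<t+h\}$. The semigroup identity $W(\cdot,t+h-u)=W(\cdot,h)*_x W(\cdot,t-u)$ (valid for $t>u$) together with $\int W(z,h)\,dz=1$ rewrites
\[
I = \int W(z,h)\,\bigl[W*\mu(x-z,t) - W*\mu(x,t)\bigr]\,dz,
\]
so that $|I|\leq \|\nabla_x W*\mu\|_\infty\int W(z,h)|z|\,dz\lesssim h^{1/2}$. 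For $J$, a dyadic decomposition into parabolic annuli centered at $(x,t+h)$, combined with the Gaussian decay of $W$ from Lemma \ref{estimates} and the $(n+1)$-growth of $\mu$, yields $|J|\lesssim h^{1/2}$.

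Next, I would fix a parabolic cube $Q$ and pick a smooth cutoff $\psi$ with $\chi_{4Q}\le\psi\le\chi_{8Q}$, $|\nabla_x\psi|\lesssim 1/\ell(Q)$ and $|\Delta\psi|+|\partial_t\psi|\lesssim 1/\ell(Q)^2$, so that $c\psi$ is admissible for $8Q$. Write $\mu_1:=\psi\mu$ (a positive measure supported in $8Q$) and $\mu_2:=(1-\psi)\mu$ (a distribution supported in $(4Q)^c$). For the local part, Lemma \ref{estimates} together with $|\bar z|_p^2\geq |z|^{1/2}|s|^{3/4}$ gives
\[
|\partial_t^{1/2}W(\bar z)|\lesssim \frac{1}{|z|^{n-1/2}\,|s|^{3/4}}\quad(\bar z=(z,s)),
\]
whence Fubini and the growth of $\mu$ imply
\[
\int_Q|\partial_t^{1/2}W*\mu_1|\,dm \lesssim \ell(Q)\,\mu(8Q)\lesssim \ell(Q)^{n+2}.
\]
For the non-local part, I would apply Lemma \ref{lembmo} to $\mu_2$. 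The growth and support conditions hold automatically. Applying Lemmas \ref{lemlocnabla} and \ref{lem3.3} to $\nu=\mu$ (whose required inputs $\|\nabla_x W*\mu\|_\infty\le 1$ and $\|W*\mu\|_{Lip_{1/2,t}}\lesssim 1$ are respectively the hypothesis and the result of the first paragraph) gives $\|\nabla_x W*\mu_1\|_\infty\lesssim 1$ and $\|W*\mu_1\|_{Lip_{1/2,t}}\lesssim 1$, and hence by subtraction
\[
\|\nabla_x W*\mu_2\|_\infty\lesssim 1,\qquad \|W*\mu_2\|_{Lip_{1/2,t}}\lesssim 1.
\]
Setting $c_Q:=m_Q(\partial_t^{1/2}W*\mu_2)$, Lemma \ref{lembmo} then yields $\int_Q|\partial_t^{1/2}W*\mu_2-c_Q|\,dm\lesssim\ell(Q)^{n+2}$, which combined with the local estimate proves the lemma.

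The key step is the Lipschitz-in-$t$ estimate in the first paragraph: without it, the localization lemmas of Section \ref{section-localization} cannot be invoked for the splitting, because those lemmas take the $\mathrm{Lip}_{1/2,t}$ bound as input. The heat-semigroup representation of $I$ breaks this circularity, exploiting the positivity of both $\mu$ and the Gaussian $W(\cdot,h)$ to convert the time-regularity question into a space-regularity one, which is then addressed by the single hypothesis on $\nabla_x W*\mu$.
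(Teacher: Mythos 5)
Your proof is correct and follows the same overall architecture as the paper's (Lip$(1/2,t)$ estimate, local/non-local splitting by a cutoff adapted to $Q$, kernel estimate for the local piece, Lemma~\ref{lembmo} for the far piece), but you take a genuinely different route to the Lip$(1/2,t)$ bound and you package the non-local estimate more economically.

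For the Lip$(1/2,t)$ bound, the paper's Lemma~\ref{lemlip1/2} argues directly: it splits the increment at the midpoint $\bar x_0$ into far and near parabolic annuli, using the mean-value theorem with $|\partial_t W|\lesssim|\cdot|_p^{-(n+2)}$ for the far part and $W\lesssim|\cdot|_p^{-n}$ for the near part, and only ever invokes the parabolic growth of $\mu$. Your semigroup argument for $I$ instead converts the time increment into a spatial average of $W*\mu(\cdot-z,t)-W*\mu(\cdot,t)$ against $W(z,h)\,dz$, and then spends the hypothesis $\|\nabla_x W*\mu\|_\infty\le1$; this is a clean and instructive reduction of time-regularity to space-regularity, but it means your version of the Lip$(1/2,t)$ estimate is logically weaker than the paper's Lemma~\ref{lemlip1/2} (it needs the extra hypothesis, which the paper's version does not). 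Both hypotheses are available in Lemma~\ref{propo1}, so this costs nothing here. Note also that the semigroup step does not really require positivity of $\mu$, only absolute convergence of $W*\mu$, so your claim that it ``exploits the positivity of $\mu$'' is overstated.

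One precision issue in the $J$ term: the crude estimate $W(\bar z)\lesssim|\bar z|_p^{-n}$ that Lemma~\ref{estimates} actually provides is \emph{not} sufficient to sum the dyadic annuli there. The region $\{t\le u<t+h\}$ is an unbounded time-slab, and on the annulus $|x-y|\approx 2^kh^{1/2}$ the naive bound gives a contribution $\approx 2^kh^{1/2}$, which diverges in $k$. You must use the genuine Gaussian decay of $W$ (say, $W(x-y,v)\lesssim e^{-c|x-y|^2/h}|x-y|^{-n}$ for $0<v\le h$) to make the sum converge; this follows from the explicit formula for $W$ rather than from Lemma~\ref{estimates} as cited. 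You signal awareness of this by invoking ``Gaussian decay,'' but the attribution to Lemma~\ref{estimates} is imprecise. (The paper's near/far split around the midpoint sidesteps this entirely, since its near region is a bounded parabolic ball.)

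For the non-local part, you directly verify the hypotheses of Lemma~\ref{lembmo} for $\mu_2$ using Lemmas~\ref{lemlocnabla} and~\ref{lem3.3} and subtraction, and then cite Lemma~\ref{lembmo}. The paper instead chooses $c_Q=\partial_t^{1/2}W*(\wt\chi_{5Q^c}\mu)(\bar x_Q)$ and re-derives the pointwise oscillation estimate of Lemma~\ref{lembmo} by hand (Cases 1 and 2), taking advantage of the fact that $\mu$ is a measure so that $\partial_t W*(\wt\chi_{5Q^c}\mu)$ can be bounded by a direct integral rather than by the distributional pairing of Lemma~\ref{lembmo}'s proof. Your version is shorter and equally valid; the paper's is more self-contained at the cost of a repeated computation.
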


\vv
For that we need the following lemma.
\begin{lemma} \label{lemlip1/2}
Let $\mu$ be a measure in $\R^{n+1}$ which has upper parabolic growth of degree $n+1$ with constant $1$. Then,
$$\|W*\mu\|_{Lip_{1/2,t}}\lesssim 1.$$
\end{lemma}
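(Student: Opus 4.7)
The plan is to bound $|W*\mu(x,t)-W*\mu(x,u)|\lesssim|t-u|^{1/2}$ directly by splitting the measure-integral into a ``near'' and ``far'' region with respect to the time-midpoint. Assume without loss of generality $t>u$, set $h=t-u$, $\bar x_t=(x,t)$, $\bar x_u=(x,u)$ and $\bar x_m=(x,(t+u)/2)$, so that
$$W*\mu(x,t)-W*\mu(x,u) = \int \bigl[W(\bar x_t - \bar y)-W(\bar x_u - \bar y)\bigr]\,d\mu(\bar y).$$
I split the integration region into the close set $C=B_p(\bar x_m,4h^{1/2})$ and its complement $F$, and show each piece is $\lesssim h^{1/2}$.

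For the close region I would use the triangle inequality and the pointwise estimate $0\le W(\bar z)\lesssim|\bar z|_p^{-n}$ from Lemma \ref{estimates}. Since $|\bar x_t-\bar x_m|_p=|\bar x_u-\bar x_m|_p=(h/2)^{1/2}$, one has $C\subset B_p(\bar x_t,5h^{1/2})\cap B_p(\bar x_u,5h^{1/2})$. A dyadic decomposition together with the parabolic growth $\mu(B_p(\bar z,r))\le r^{n+1}$ yields
$$\int_C W(\bar x_t-\bar y)\,d\mu(\bar y)\lesssim\sum_{k\ge 0}(2^{-k-1}\cdot 5h^{1/2})^{-n}\,(2^{-k}\cdot 5h^{1/2})^{n+1}\lesssim h^{1/2},$$
and similarly for the $\bar x_u$-potential.

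For the far region I would apply the mean value theorem in the time variable, producing $\tau\in[u,t]$ with
$$|W(x-y,t-s)-W(x-y,u-s)|\le h\,|\partial_\tau W(x-y,\tau-s)|.$$
For $\bar y=(y,s)\in F$ one has $|(x,\tau)-\bar x_m|_p\le(h/2)^{1/2}\le\tfrac12|\bar x_m-\bar y|_p$, hence $|(x-y,\tau-s)|_p\approx|\bar x_m-\bar y|_p$, and $|\partial_\tau W(\bar z)|\lesssim|\bar z|_p^{-(n+2)}$ from Lemma \ref{estimates} gives
$$|W(x-y,t-s)-W(x-y,u-s)|\lesssim h\,|\bar x_m-\bar y|_p^{-(n+2)}.$$
A dyadic sum of this over the annuli $\{2^kh^{1/2}<|\bar x_m-\bar y|_p\le 2^{k+1}h^{1/2}\}$ for $k\ge 2$, again using the parabolic growth, yields $\int_F|\bar x_m-\bar y|_p^{-(n+2)}\,d\mu(\bar y)\lesssim h^{-1/2}$, so the far contribution is also $\lesssim h^{1/2}$.

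The one subtle point I expect is justifying the mean value theorem across the discontinuity of $W$ at $t=0$. This is handled by observing that for $\bar y\in F$ either $|x-y|>4h^{1/2}$ (in which case $W(x-y,\cdot)$ is $C^\infty$ in time), or else $|(t+u)/2-s|>16h$, and then $\tau-s$ has constant sign on $[u,t]$, so $W(x-y,\tau-s)$ is either identically zero on $[u,t]$ (giving nothing to estimate) or stays in the region $\{t>0\}$ where $W$ is smooth. The remaining calculations are routine telescoping of geometric series.
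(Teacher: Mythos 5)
Your proposal is correct and follows essentially the same route as the paper: split at the time-midpoint into a near parabolic ball where one bounds the two potentials separately via $W\lesssim|\cdot|_p^{-n}$ and dyadic annuli, and a far region where one applies a time mean-value bound using $|\partial_t W|\lesssim|\cdot|_p^{-(n+2)}$. The only addition is your careful justification of the mean value step across the discontinuity of $W$ at $t=0$, a point the paper glosses over by writing $\sup_{\xi}|\partial_t W(\xi)|$ directly; this extra care is sound but does not change the argument.
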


\begin{proof}
Let $\bar x = (x,t)$, $\hat x = (x,u)$, and $\bar x_0 = \frac12(\bar x + \hat x)$.
Then, writing $\bar y=(y,s)$, we split
\begin{align*}
|W*\mu(\bar x) - W*\mu(\hat x)| & \leq \int_{|\bar y -\bar x_0|_p\geq 2 |\bar x-\hat x|_p}
|W(x-y,t-s) - W(x-y,u-s)|\,d\mu(\bar y) \\
&\quad + \int_{|\bar y -\bar x_0|_p < 2 |\bar x-\hat x|_p}\!
|W(x-y,t-s) - W(x-y,u-s)|\,d\mu(\bar y) =: I_1 + I_2.
\end{align*}

To shorten notation, we write $d:=|\bar x-\hat x|_p = |t-u|^{1/2}$. Then we have
$$I_1\lesssim \sum_{k\geq 1} \int_{2^k d\leq |\bar y -\bar x_0|_p< 2^{k+1}d} \,\,\sup_{\xi\in [\bar x-\bar y,
\hat x-\bar y]} |\partial_t W (\xi)| \,|t-u|\,d\mu(\bar y).$$
Since 
$$|\partial_t W (\xi)|\lesssim \frac1{|\xi|_p^{n+2}}\approx \frac1{(2^kd)^{n+2}}\quad
\mbox{ if $\xi\in [\bar x-\bar y,
\hat x-\bar y]$, $|\bar y -\bar x_0|_p\approx 2^{k}d$,}$$
we deduce that
$$I_1\lesssim \sum_{k\geq 1} \frac{\mu(B_p(\bar x_0,2^{k+1}d))}{(2^k d)^{n+2}}\,|t-u|\lesssim \frac{|t-u|}d= |t-u|^{1/2}.$$

Next we deal with $I_2$. Writing $B_0= B_p(\bar x_0,2d)$, we have
$$I_2 \leq W*(\chi_{B_0}\mu)(\bar x) + W*(\chi_{B_0}\mu)(\hat x).$$
Observe now that
$$0\leq W*(\chi_{B_0}\mu)(\bar x) \lesssim \int_{\bar y\in B_0} \frac1{|\bar x- \bar y|_p^n}\,
d\mu(\bar y) 
\leq \int_{|\bar x-\bar y|\leq 4d} \frac1{|\bar x- \bar y|_p^n}\,
d\mu(\bar y)
\lesssim d =|t-u|^{1/2}.$$
The last estimate follows by splitting the integral into parabolic annuli and using the parabolic
growth of order $n+1$ of $\mu$, for example.
The same estimate holds replacing $\bar x$ by $\hat x$. Then gathering all the estimates above, the lemma
follows.
\end{proof}

\vv
\begin{proof}[Proof of Lemma \ref{propo1}]
Let $Q\subset \R^{n+1}$ be a fixed parabolic cube. We have to show that there exists some constant $c_Q$
(to be chosen below) such that
$$\int_Q |\partial_t^{1/2} W*\mu - c_Q|\,dm \lesssim \ell(Q)^{n+2}.$$
To this end, we consider a $C^2$ function $\wt\chi_{5Q}$ which equals $1$ on $5Q$, vanishes in $6Q^c$,
and satisfies
$$\|\nabla_x \wt\chi_{5Q}\|_\infty \lesssim \frac1{\ell(Q)}, \qquad
\|\nabla^2_x \wt\chi_{5Q}\|_\infty + \|\partial_t \wt\chi_{5Q}\|_\infty 
\lesssim \frac1{\ell(Q)^2}.
$$
We also denote $\wt\chi_{5Q^c} = 1 - \wt\chi_{5Q}$. Then we
split
$$\int_Q |\partial_t^{1/2} W*\mu - c_Q|\,dm \leq
\int_Q |\partial_t^{1/2} W*(\wt\chi_{5Q}\mu)|\,dm + \int_Q |\partial_t^{1/2} W*(\wt\chi_{5Q^c}\mu) - c_Q|\,dm =: I_1 + I_2.
$$
To deal with the integral $I_1$, we just write
$$
\int_Q |\partial_t^{1/2} W*(\wt\chi_{5Q}\mu)|\,dm\lesssim
\int_Q \frac1{|x|^{n-1}\,|\bar x|_p^2}* (\wt\chi_{5Q}\mu)\,dm \leq \int_{6Q} \frac1{|y|^{n-1}\,|\bar y|_p^2}* (\chi_{Q}m)\,d\mu.$$
Taking into account that, for $\bar y=(y,u)$,
$$\frac1{|y|^{n-1}\,|\bar y|_p^2}\leq \frac1{|y|^{n-1/2}}\,\frac1{u^{3/4}},$$
and writing $Q=Q_1\times I_Q$, where $Q_1$ is a cube with side length $\ell(Q)$ in $\R^n$ and $I_Q$ is an interval of length $\ell(Q)^2$,
we deduce that for $\bar x=(x,t)\in 6Q$,
$$\frac1{|y|^{n-1}\,|\bar y|_p^2}* (\chi_{Q}m)(\bar x) \lesssim \int_{y\in Q_1} \frac1{|x-y|^{n-1/2}}dy\,\int_{u\in
I_Q}\frac1{|t-u|^{3/4}}\,du \lesssim \ell(Q)^{1/2}\,(\ell(Q)^2)^{1/4} = \ell(Q).$$
Thus,
$$\int_Q |\partial_t^{1/2} W*(\wt\chi_{5Q}\mu)|\,dm\lesssim \ell(Q)\,\mu(6Q)\lesssim \ell(Q)^{n+2}.$$

Next we will estimate the integral $I_2$, taking $c_Q:=
\partial_t^{1/2} W*(\wt\chi_{5Q^c}\mu)(\bar x_Q)$, where $\bar x_Q$ is the center of $Q$.
We follow the same scheme as in the proof of Lemma \ref{lembmo}.
To show that $I_2\lesssim \ell(Q)^{n+2}$, it suffices to prove
$$|\partial_t^{1/2} W*(\wt\chi_{5Q^c}\mu)(\bar x) - 
\partial_t^{1/2} W*(\wt\chi_{5Q^c}\mu)(\bar x_Q)|\lesssim 1.$$
In turn, to prove this it is enough to show that
\begin{equation}
|\partial_t^{1/2} W*(\wt\chi_{5Q^c}\mu)(\bar x) - 
\partial_t^{1/2} W*(\wt\chi_{5Q^c}\mu)(\bar y)|\lesssim 1\label{eqdif}
\end{equation}
for $\bar x,\bar y\in \R^{n+1}$ in the following two cases:
\begin{itemize}
\item Case 1:  $\bar x,\bar y\in Q$ of the form $\bar x= (x,t)$, $\bar y= (y,t)$.
\item Case 2:  $\bar x,\bar y\in Q$ of the form $\bar x= (x,t)$, $\bar y= (x,u)$.
\end{itemize}

\vv
\noi {\em Proof of \rf{eqdif} in Case 1.}
We split
\begin{align*}
|&\partial_t^{1/2} W* (\wt\chi_{5Q^c}\mu)(x,t) - \partial_t^{1/2} W* (\wt\chi_{5Q^c}\mu)(y,t)|\\
 & =
\left|\int \frac{W*(\wt\chi_{5Q^c}\mu)(x,s) - W*(\wt\chi_{5Q^c}\mu)(x,t)}{|s-t|^{3/2}}\,ds -\!
\int \frac{W*(\wt\chi_{5Q^c}\mu)(y,s) - W*(\wt\chi_{5Q^c}\mu)(y,t)}{|s-t|^{3/2}}\,ds\right|\\
& \leq
\int_{|s-t|\leq4\ell(Q)^2} \frac{|W*(\wt\chi_{5Q^c}\mu)(x,s) - W*(\wt\chi_{5Q^c}\mu)(x,t)|}{|s-t|^{3/2}}\,ds\\
&\quad  +
\int_{|s-t|\leq4\ell(Q)^2} \frac{|W*(\wt\chi_{5Q^c}\mu)(y,s) - W*(\wt\chi_{5Q^c}\mu)(y,t)|}{|s-t|^{3/2}}\,ds
\\
&\quad +
\!\int_{|s-t|>4\ell(Q)^2}\!\!\!\! \frac{|W\!*(\wt\chi_{5Q^c}\mu)(x,s) - W\!*(\wt\chi_{5Q^c}\mu)(x,t) - W\!*(\wt\chi_{5Q^c}\mu)(y,s) + W\!*(\wt\chi_{5Q^c}\mu)(y,t)|}{|s-t|^{3/2}}\,ds\\
& =: A_1  + A_2 + B.
\end{align*}

First we will estimate the term $A_1$ . For $s,t$ such that $|s-t|\leq4\ell(Q)^2$, we write
$$|W*(\wt\chi_{5Q^c}\mu)(x,s) - W*(\wt\chi_{5Q^c}\mu)(x,t)|\leq |s-t|\,\|\partial_t W*(\wt\chi_{5Q^c}\mu)\|_{\infty,3Q}.$$
Observe now that for each $\bar z= (z,v)\in 3Q$,
$$|\partial_t W*(\wt\chi_{5Q^c}\mu)(\bar z)| \lesssim\int_{5Q^c} \frac1{|\bar z-\bar w|_p^{n+2}}\,d\mu(\bar w)
\lesssim \frac1{\ell(Q)},$$
by splitting the last domain of integration into parabolic annuli and using the growth condition of order $n+1$ of $\mu$. Thus,
 $$|W*(\wt\chi_{5Q^c}\mu)(x,s) - W*(\wt\chi_{5Q^c}\mu)(x,t)|\lesssim \frac{|s-t|}{\ell(Q)}.$$
 Plugging this into the integral that
defines $A_1$, we obtain
$$A_1\lesssim \int_{|s-t|\leq4\ell(Q)^2} \frac{|s-t|}{\ell(Q)\,|s-t|^{3/2}}\,ds 
\lesssim \frac{(\ell(Q)^2)^{1/2}}{\ell(Q)}=1.$$
By exactly the same arguments, just writing $y$ in place of $x$ above, we deduce also that
$$A_2\lesssim 1.$$

Concerning the term $B$, we write
\begin{align*}
B & \leq \!\int_{|s-t|>4\ell(Q)^2}\!\!\!\! \frac{|W\!*(\wt\chi_{5Q^c}\mu)(x,s) - W\!*(\wt\chi_{5Q^c}\mu)(x,t) - W\!*(\wt\chi_{5Q^c}\mu)(y,s) + W\!*(\wt\chi_{5Q^c}\mu)(y,t)|}{|s-t|^{3/2}}\,ds\\
&
\leq 
\int_{|s-t|>4\ell(Q)^2} \frac{|W*(\wt\chi_{5Q^c}\mu)(x,s) - W*(\wt\chi_{5Q^c}\mu)(y,s)|}{|s-t|^{3/2}}\,ds\\
&\quad
+
\int_{|s-t|>4\ell(Q)^2} \frac{|W*(\wt\chi_{5Q^c}\mu)(x,t) - W*(\wt\chi_{5Q^c}\mu)(y,t)|}{|s-t|^{3/2}}\,ds.
\end{align*}
By Lemma \ref{lemlocnabla}, it follows that $\|\nabla_x W*(\wt\chi_{5Q}\mu)\|_\infty\lesssim 1$, and thus
$$\|\nabla_x W*(\wt\chi_{5Q^c}\mu)\|_\infty \leq \|\nabla_x W*\mu\|_\infty
+ \|\nabla_x W*(\wt\chi_{5Q}\mu)\|_\infty\lesssim 1.$$
Therefore,
$$|W*(\wt\chi_{5Q^c}\mu)(x,s) - W*(\wt\chi_{5Q^c}\mu)(y,s)|\leq \|\nabla_x W*(\wt\chi_{5Q^c}\mu)\|_\infty 
\,|x-y|\lesssim \ell(Q).$$
The same estimate holds replacing $(x,s)$ and $(y,s)$ by $(x,t)$ and $(y,t)$.
Hence,
$$B\lesssim \int_{|s-t|>\ell(Q)^2} \frac{\ell(Q)}{|s-t|^{3/2}}\,ds \lesssim \frac{\ell(Q)}{(\ell(Q)^2)^{1/2}}\lesssim 1.$$
So \rf{eqdif} holds in this case.

\vv
\noi {\em Proof of \rf{eqdif} in Case 2.}
As in Case 1, we write
\begin{align*}
|&\partial_t^{1/2} W* (\wt\chi_{5Q^c}\mu)(x,t) - \partial_t^{1/2} W* (\wt\chi_{5Q^c}\mu)(x,u)|\\
 & =
\left|\int \frac{W*(\wt\chi_{5Q^c}\mu)(x,s) - W*(\wt\chi_{5Q^c}\mu)(x,t)}{|s-t|^{3/2}}\,ds -\!
\int \frac{W*(\wt\chi_{5Q^c}\mu)(x,s) - W*(\wt\chi_{5Q^c}\mu)(x,u)}{|s-u|^{3/2}}\,ds\right|\\
& \leq
\int_{|s-t|\leq4\ell(Q)^2} \frac{|W*(\wt\chi_{5Q^c}\mu)(x,s) - W*(\wt\chi_{5Q^c}\mu)(x,t)|}{|s-t|^{3/2}}\,ds\\
&\quad  +
\int_{|s-t|\leq4\ell(Q)^2} \frac{|W*(\wt\chi_{5Q^c}\mu)(x,s) - W*(\wt\chi_{5Q^c}\mu)(x,u)|}{|s-u|^{3/2}}\,ds
\\
&\quad +
\biggl|\int_{|s-t|>4\ell(Q)^2} \frac{W*(\wt\chi_{5Q^c}\mu)(x,s) - W*(\wt\chi_{5Q^c}\mu)(x,t)}{|s-t|^{3/2}}\,ds \\
&\qquad\qquad\qquad -\int_{|s-t|>4\ell(Q)^2} \frac{W*(\wt\chi_{5Q^c}\mu)(x,s) - W*(\wt\chi_{5Q^c}\mu)(x,u)}{|s-u|^{3/2}}\,ds\biggr|\\
& =: A_1'  + A_2' + B'.
\end{align*}
The terms $A_1'$ and $A_2'$ can be estimated exactly in the same way as the terms $A_1$ and $A_2$ in Case 1, so that
$$A_1' + A_2'\lesssim 1.$$

Concerning $B'$, we have
\begin{align*}
B' & \leq \int_{|s-t|>4\ell(Q)^2} \bigg|\frac{W*(\wt\chi_{5Q^c}\mu)(x,s) - W*(\wt\chi_{5Q^c}\mu)(x,t)}{|s-t|^{3/2}}\\
& \quad- \frac{W*(\wt\chi_{5Q^c}\mu)(x,s) - W*(\wt\chi_{5Q^c}\mu)(x,u)}{|s-u|^{3/2}}\bigg| ds\\
&\leq 
 \int_{|s-t|>4\ell(Q)^2} 
\left|\frac1{|s-t|^{3/2}} - \frac1{|s-u|^{3/2}}\right|\,
 \big|W*(\wt\chi_{5Q^c}\mu)(x,s) - W*(\wt\chi_{5Q^c}\mu)(x,t)\big| \,ds\\
&\quad 
+ \int_{|s-t|>4\ell(Q)^2} 
\frac1{|s-u|^{3/2}}\,
 \big|W*(\wt\chi_{5Q^c}\mu)(x,t) - W*(\wt\chi_{5Q^c}\mu)(x,u)\big| \,ds
\end{align*}
Taking into account that, for $|s-t|>4\ell(Q)^2$,
$$\left|\frac1{|s-t|^{3/2}} - \frac1{|s-u|^{3/2}}\right|\lesssim \frac{|t-u|}{|s-t|^{5/2}}
\lesssim \frac{\ell(Q)^2}{|s-t|^{5/2}}$$
and that, by Lemma \ref{lemlip1/2}, $W*(\wt\chi_{5Q^c}\mu)(x,\cdot)$ is Lip$(1/2)$ in the variable $t$, 
we deduce that
$$B' \lesssim 
 \int_{|s-t|>4\ell(Q)^2} \frac{\ell(Q)^2}{|s-t|^{5/2}}
\,|s - t|^{1/2} \,ds\\
+ 
 \int_{|s-t|>4\ell(Q)^2}\frac1{|s-u|^{3/2}}\,|t-u|^{1/2} \,ds \lesssim 1.
$$

\end{proof}
\vv


\section{Capacities and removable singularities}\label{section-capacities}

Given a bounded set $E\subset \R^{n+1}$,
we define
\begin{equation}\label{eqsupgame1}
\gamma_\Theta(E) =\sup|\langle\nu,1\rangle|,
\end{equation}
where the supremum is taken over all distributions $\nu$ supported on $E$ such that
\begin{equation}\label{eqsupgame2}
\|\nabla_x W*\nu\|_{L^\infty(\R^{n+1})} \leq 1,\qquad \|\partial_t^{1/2} W*\nu\|_{*,p} \leq 1.
\end{equation}
We call $\gamma_\Theta(E)$ the Lipschitz caloric capacity of $E$.
On the other hand, we define the Lipschitz caloric capacity $+$ of $E$, denoted by $\gamma_{\Theta,+}(E)$, 
in the same way as in \rf{eqsupgame1}, but with the supremum restricted to all positive measures $\nu$
supported on $E$ satisfying also \rf{eqsupgame2}. 
Obviously,
$$\gamma_{\Theta,+}(E)\leq \gamma_\Theta(E).$$

Given $\lambda>0$, we consider the parabolic dilation

$$\delta_\lambda(x,t) = (\lambda x,\lambda^{2}t).$$

It is immediate to check that
$$\gamma_\Theta(\delta_\lambda(E))=\lambda^{n+1}\,\gamma_\Theta(E),\qquad \gamma_{\Theta,+}(\delta_\lambda(E))=\lambda^{n+1}\,\gamma_{\Theta,+}(E).$$
\vv

\begin{lemma}
For every Borel set $E\subset\R^{n+1}$,
$$\gamma_{\Theta,+}(E)\leq \gamma_\Theta(E) \lesssim \HH^{n+1}_{\infty,p}(E),$$
and
$$\dim_{H,p}(E)>n+1\quad\Rightarrow \quad \gamma_\Theta(E)>0.$$
\end{lemma}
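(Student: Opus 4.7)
My plan has three parts, one for each assertion. The first inequality $\gamma_{\Theta,+}(E) \leq \gamma_\Theta(E)$ is immediate from the definitions, since every positive measure is a distribution.

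For the content bound $\gamma_\Theta(E) \lesssim \HH_{\infty,p}^{n+1}(E)$, I would take any admissible distribution $\nu$ for $\gamma_\Theta(E)$; by Lemma \ref{lemgrow}, $\nu$ has upper parabolic $C$-growth of degree $n+1$ for some absolute constant $C$. Given $\varepsilon>0$, cover $E$ by parabolic balls $\{B_p(\bar x_i,r_i)\}_i$ with $\sum_i r_i^{n+1} \leq \HH_{\infty,p}^{n+1}(E)+\varepsilon$, and, thinning via a Besicovitch-type covering lemma, arrange that $\{2B_p(\bar x_i,r_i)\}_i$ has bounded overlap. I would then construct a smooth partition of unity $\{\vphi_i\}$ subordinate to this cover with $\sum_i \vphi_i\equiv 1$ in a neighborhood of $E$ and satisfying $\|\nabla_x\vphi_i\|_\infty \lesssim r_i^{-1}$, $\|\Delta\vphi_i\|_\infty + \|\partial_t\vphi_i\|_\infty \lesssim r_i^{-2}$. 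Hence there is an absolute $c_0>0$ such that $c_0\vphi_i$ is admissible in the sense of \rf{admis} for a parabolic cube $Q_i\supset 2B_p(\bar x_i,r_i)$ with $\ell(Q_i)\approx r_i$. Since $\supp\nu\subset E$, I get $\langle\nu,1\rangle = \sum_i\langle\nu,\vphi_i\rangle$, and the growth of $\nu$ bounds each $|\langle\nu,\vphi_i\rangle|\lesssim \ell(Q_i)^{n+1}\lesssim r_i^{n+1}$. Summing and sending $\varepsilon\to 0$ yields the claim.

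For the implication $\dim_{H,p}(E)>n+1 \Rightarrow \gamma_\Theta(E)>0$, I would pick $s\in(n+1,\dim_{H,p}(E))$ and invoke the parabolic Frostman lemma to produce a nonzero compactly supported positive measure $\mu$ on $E$ with $\mu(B_p(\bar x,r))\leq r^s$ for all $\bar x,r>0$. Since $s>n+1$ and $\mu$ has finite total mass, $\mu$ automatically has upper parabolic growth of degree $n+1$ with some finite constant (small radii via $r^s\leq r^{n+1}$, large radii via boundedness of total mass). Using $|\nabla_x W(\bar z)|\lesssim |\bar z|_p^{-(n+1)}$ from Lemma \ref{estimates} and decomposing $\R^{n+1}$ into dyadic parabolic annuli around an arbitrary point $\bar x$, I would verify $\|\nabla_x W*\mu\|_\infty \leq C(\mu,s)<\infty$ (small-radius sum convergent by $s>n+1$, large-radius sum convergent by bounded total mass). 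Rescaling $\mu$ by a small absolute constant to bring both its growth constant and $\|\nabla_x W*\mu\|_\infty$ at most $1$, Lemma \ref{propo1} then provides $\|\partial_t^{1/2}W*\mu\|_{*,p}\lesssim 1$. A final absolute rescaling makes the measure admissible for $\gamma_{\Theta,+}(E)$ with positive total mass, so $\gamma_\Theta(E)\geq\gamma_{\Theta,+}(E)>0$.

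The main technical point will be the partition of unity in the second step: arranging that each $\vphi_i$ is simultaneously admissible for its associated parabolic cube $Q_i$ requires care with the anisotropic parabolic geometry (spatial side $\approx r_i$, temporal side $\approx r_i^2$), but a Besicovitch covering combined with a Whitney-type smoothing of the characteristic functions of the balls handles it. The Frostman argument and the invocation of Lemma \ref{propo1} in the third step are then essentially routine given the preparatory results already established in the paper.
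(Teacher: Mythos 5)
Your approach is essentially the same as the paper's: the first inequality is trivial, the content bound combines the growth estimate of Lemma \ref{lemgrow} with a covering by parabolic balls and a partition of unity with scale-matched derivative bounds, and the dimension implication uses a parabolic Frostman measure together with the kernel bound from Lemma \ref{estimates} and the BMO bound from Lemma \ref{propo1}.

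There is, however, one genuine issue in your sketch of the partition of unity. Thinning to bounded overlap via Besicovitch and then normalizing, $\vphi_i=\psi_i/\sum_k\psi_k$ with $\chi_{B_i}\le\psi_i\le\chi_{2B_i}$, does \emph{not} automatically deliver $\|\nabla_x\vphi_i\|_\infty\lesssim r_i^{-1}$ and $\|\Delta\vphi_i\|_\infty+\|\partial_t\vphi_i\|_\infty\lesssim r_i^{-2}$. Bounded overlap controls the number of terms contributing to $\sum_k\psi_k$ near $2B_i$, but it does not force the overlapping balls to have radius comparable to $r_i$: if some $B_k$ with $r_k\ll r_i$ has $2B_k\cap2B_i\ne\varnothing$, then $\nabla_x\big(\sum_k\psi_k\big)$ is of size $r_k^{-1}\gg r_i^{-1}$ there, and this large gradient propagates to $\nabla_x\vphi_i$ through the quotient. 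Consequently $c\vphi_i$ need not be admissible for any parabolic cube of side $\approx r_i$, and the bound $|\langle\nu,\vphi_i\rangle|\lesssim r_i^{n+1}$ is not justified by the growth hypothesis. The paper sidesteps this by invoking a parabolic version of the Harvey--Polking lemma \cite[Lemma 3.1]{Harvey-Polking}, whose recursive, scale-ordered construction yields $\|D^\alpha\vphi_i\|_\infty\lesssim r_i^{-|\alpha|}$ even when the covering balls have wildly disparate radii and without any overlap hypothesis. Replacing your Besicovitch/Whitney step with this lemma (applied after first reducing to $E$ compact so the cover can be taken finite, which also makes $\langle\nu,1\rangle=\sum_i\langle\nu,\vphi_i\rangle$ an honest finite sum) repairs the proof; the remainder of your argument, including the Frostman step and the rescaling at the end of part three, is correct and matches the paper.
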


In the lemma,  $\dim_{H,p}$ stands for the parabolic Hausdorff dimension.

\begin{proof}
The inequality $\gamma_{\Theta,+}(E)\leq \gamma_\Theta(E)$
is trivial, and the arguments for the other statements are standard. Indeed, to prove $\gamma_\Theta(E) \lesssim \HH^{n+1}_{\infty,p}(E)$
 first notice that we can assume $E$ to be compact.
Let $\nu$
be a distribution supported on $E$ such that
\begin{equation}\label{eqadm5}
\|\nabla_x W*\nu\|_{L^\infty(\R^{n+1})} \leq 1,\qquad \|\partial_t^{1/2} W*\nu\|_{*,p} \leq 1,
\end{equation}
and let $\{A_i\}_{i\in I}$ be a collection of sets in $\R^{n+1}$ which cover $E$ and such that
$$\sum_{i\in I} \diam_p(A_i)^{n+1}\leq 2\,\HH^{n+1}_{\infty,p}(E).$$
For each $i\in I$, let $B_i$ be an open parabolic ball centered in $A_i$ with $r(B_i)=\diam_p(A_i)$, so that $E\subset \bigcup_{i\in I} B_i$. By the compactness of $E$ we can assume $I$ to be finite.
By means of a parabolic version of the Harvey-Polking lemma \cite[Lemma 3.1]{Harvey-Polking}, we can construct $C^\infty$ functions $\vphi_i$, $i\in I$, satisfying:
\begin{itemize}
\item $\supp\vphi_i\subset 2B_i$ for each $i \in I$,
\item $\|\nabla_x \vphi_i\|_\infty \lesssim 1/r(B_i)$, $\|\nabla^2_x \vphi_i\|_\infty + 
\|\partial_t \vphi_i\|_\infty \lesssim 1/r(B_i)^2$,
\item $\sum_{i\in I}\vphi_i = 1$ in $\bigcup_{i\in I} B_i$,
\end{itemize}
Hence, by Lemma \ref{lemgrow},
$$|\langle\nu,1\rangle| = \Big|\sum_{i\in I} \langle\nu,\vphi_i\rangle| \lesssim \sum_{i\in I}r(B_i)^{n+1}
= \sum_{i\in I}\diam_p(A_i)^{n+1}\lesssim \HH^{n+1}_{\infty,p}(E).$$
Since this holds for any distribution $\nu$ supported on $E$ satisfying \rf{eqadm5}, we
deduce that $\gamma_\Theta(E) \lesssim \HH^{n+1}_{\infty,p}(E)$.

\vv

To prove the second assertion in the lemma, let $E\subset\R^{n+1}$ be a Borel set satisfying 
$\dim_{H,p}(E)=s>n+1$. We may assume $E$ to be bounded. We may apply a parabolic version of the well known Frostman lemma, which can be proved by arguments analogous to classical ones replacing the usual dyadic lattice in $\R^{n+1}$ by the parabolic lattice $\DD_p$ defined as follows.
For any $k\in\Z$ we consider the family of parabolic cubes $\DD_{p,k}$ of the form
$$\{(x,t)\in\R^{n+1}: i_j 2^{-k}\leq x_j< (i_j+1)2^{-k} \text{ for $1\leq j\leq n$ and }
i_{n+1} 2^{-2k}\leq t< (i_{n+1}+1)2^{-2k}\},$$
where $i_1,\ldots,i_n,i_{n+1}$ are arbitrary integers. Then we let $\DD_p= \bigcup_{k\in\Z}\DD_{p,k}$.

Arguing as in the proof of Frostman lemma in \cite[Theorem 8.8]{Mattila-gmt} or 
\cite[Theorem 1.23]{Tolsa-llibre}, from the fact that $\dim_{H,p}(E)=s>n+1$ 
it follows that there exists some non-zero positive measure $\mu$ supported on $E$ satisfying
$\mu(B_p(\bar x,r))\leq r^s$ for all $\bar x\in\R^{n+1}$ and all $r>0$.
Then, by Lemma \ref{estimates} we deduce that, for all $\bar x\in\R^{n+1}$,
$$|\nabla_x W * \mu(\bar x)|\lesssim \int \frac1{|\bar x-\bar y|_p^{n+1}} \,d\mu(\bar y)\lesssim \diam(E)^{s-(n+1)}
.$$
Now, from Lemma \ref{propo1} it follows that
$$\|\partial_t^{1/2} W * \mu\|_{*,p}<\infty.$$
Therefore, 
$$\gamma_\Theta(E)\geq \frac{\mu(E)}{\max\big(\|\nabla_x W*\mu\|_{L^\infty(\R^{n+1})},\,\|\partial_t^{1/2} W*\mu\|_{*,p} \big)}>0.$$
\end{proof}

\vv
We say that a compact set $E\subset \R^{n+1}$ is Lipschitz removable for the heat equation (or Lipschitz caloric removable) if for any open set $\Omega\subset\R^{n+1}$, any function $f:\R^{n+1}\to\R$
such that
\begin{equation}\label{eqcond22}
\|\nabla_x f\|_{L^\infty(\Omega)}<\infty,\quad \|\partial_t^{1/2} f\|_{*,\Omega,p} <\infty
\end{equation}
satisfying the heat equation in $\Omega\setminus E$, also satisfies the heat equation in the whole $\Omega$.

\begin{rem}
Functions satisfying \rf{eqcond22} are called regular $(1,1/2)$-Lipschitz in the literature (see
\cite{Nystrom-Stromqvist}, for example). So perhaps it would be
more precise to talk about regular $(1,1/2)$-Lipschitz removability or about regular $(1,1/2)$-Lipschitz caloric capacity. However, we have preferred the simpler terminology of Lipschitz removability and Lipschitz
caloric capacity for shortness.
\end{rem}

\vv
\begin{theorem}\label{teoremov}
A compact set $E\subset\R^{n+1}$ is Lipschitz caloric removable if and only if $\gamma_\Theta(E)=0$.
\end{theorem}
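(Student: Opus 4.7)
The proof splits into two directions. For the easy direction, suppose $\gamma_\Theta(E)>0$ and pick a distribution $\nu$ supported on $E$ with $\|\nabla_x W*\nu\|_\infty\leq 1$, $\|\partial_t^{1/2} W*\nu\|_{*,p}\leq 1$, and $\langle\nu,1\rangle\neq 0$. Set $f:=W*\nu$. Then $f$ is regular $(1,1/2)$-Lipschitz on $\R^{n+1}$ with $\Theta f=\nu$ distributionally, hence $f$ is caloric on $\R^{n+1}\setminus\supp\nu\supset\R^{n+1}\setminus E$ but $\Theta f=\nu\not\equiv 0$. Taking $\Omega=\R^{n+1}$, this exhibits $f$ as a witness to the non-removability of $E$.

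For the harder direction, assume $E$ is not Lipschitz caloric removable: there exist a bounded open $\Omega\supset E$ and a function $f$ on $\Omega$ with $\|\nabla_x f\|_{L^\infty(\Omega)}<\infty$ and $\|\partial_t^{1/2}f\|_{*,\Omega,p}<\infty$, caloric on $\Omega\setminus E$, for which $\nu_0:=\Theta f\not\equiv 0$ as a distribution on $\Omega$; necessarily $\supp\nu_0\subset E$. Fix $\Phi\in C_c^\infty(\Omega)$ with $\Phi\equiv 1$ on a neighborhood $V$ of $E$, and set $\wt f:=\Phi f$, extended by $0$ off $\Omega$. A somewhat technical direct verification shows that $\wt f$ is globally regular $(1,1/2)$-Lipschitz, with $\|\nabla_x\wt f\|_\infty$ and $\|\partial_t^{1/2}\wt f\|_{*,p}$ bounded by some constant $C_0$ depending on $f$, $\Phi$, and $\Omega$. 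Setting $\wt\nu:=\Theta\wt f$, a compactly supported distribution, one observes that both $\wt f$ and $W*\wt\nu$ vanish for sufficiently negative $t$ (the former by compact support, the latter because $\supp W\subset\{t\geq 0\}$); since $\Theta(\wt f-W*\wt\nu)=0$ on $\R^{n+1}$, uniqueness for the heat equation gives $W*\wt\nu=\wt f$, so
\[\|\nabla_x W*\wt\nu\|_\infty\leq C_0,\qquad \|\partial_t^{1/2} W*\wt\nu\|_{*,p}\leq C_0.\]

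Now pick $\psi\in C_c^\infty(V)$ with $\langle\nu_0,\psi\rangle\neq 0$, which is possible since $\nu_0\neq 0$. Enlarging a surrounding parabolic cube $Q\supset\supp\psi$ if necessary, we may arrange that $\psi$ is admissible for $Q$ in the sense of Section \ref{section-localization}. Because $\Phi\equiv 1$ on $V\supset\supp\psi$, the identity $\psi\wt\nu=\psi\nu_0$ holds distributionally, and in particular $\supp(\psi\nu_0)\subset E$. Applying the localization Theorem \ref{teoloc} to $\wt\nu/C_0$ and $\psi$ yields
\[\|\nabla_x W*(\psi\nu_0)\|_\infty\lesssim C_0,\qquad \|\partial_t^{1/2} W*(\psi\nu_0)\|_{*,p}\lesssim C_0,\]
while $\langle\psi\nu_0,1\rangle=\langle\nu_0,\psi\rangle\neq 0$. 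Normalizing by a constant comparable to $C_0$ produces a valid competitor in the supremum defining $\gamma_\Theta(E)$ with nonzero pairing against $1$, whence $\gamma_\Theta(E)>0$.

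The principal technical obstacle is verifying that $\wt f=\Phi f$ globally satisfies $\partial_t^{1/2}\wt f\in BMO_p(\R^{n+1})$, starting from the local hypothesis $\partial_t^{1/2}f\in BMO_p(\Omega)$. Since $BMO$ is not preserved under multiplication by arbitrary bounded functions, one must exploit the smoothness of $\Phi$: writing $\partial_t^{1/2}(\Phi f)(x,t)=\Phi(x,t)\,\partial_t^{1/2}f(x,t)+R(x,t)$ with $R(x,t)=\int f(x,s)\bigl(\Phi(x,s)-\Phi(x,t)\bigr)\,|s-t|^{-3/2}\,ds$, the remainder $R$ is bounded by combining the Lipschitz regularity of $\Phi$ in $t$ with the boundedness of $f$ on $\supp\Phi$, while the first summand lies in $BMO_p(\R^{n+1})$ by a cube-by-cube argument exploiting the $BMO_p(\Omega)$-bound on $\partial_t^{1/2}f$ together with the $C^\infty_c$-regularity and compact support of $\Phi$.
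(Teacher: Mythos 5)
Your proof follows essentially the same strategy as the paper's: verify the easy direction directly, and for the nontrivial direction, multiply the offending function $f$ by a smooth cutoff, show the resulting global function is regular $(1,1/2)$-Lipschitz so that its $\Theta$-image has bounded potentials, and then localize via Theorem~\ref{teoloc} to produce a distribution supported on $E$ with nonzero pairing against~$1$. Two points merit comment.

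\textbf{A small gap in the choice of cutoff.} You fix $\Phi\in C_c^\infty(\Omega)$ with $\Phi\equiv1$ on a neighborhood of $E$, which presupposes $E\subset\Omega$. The definition of removability quantifies over \emph{all} open sets $\Omega\subset\R^{n+1}$, not only those containing $E$; non-removability only gives you \emph{some} $\Omega$ (and some $f$), with $\supp(\Theta f)\cap\Omega\subset E\cap\Omega$, and there is no a priori reason that $\Omega\supset E$. The paper's argument sidesteps this entirely: it locates a single parabolic cube $Q$ with $4Q\subset\Omega$ on which $\Theta f\not\equiv0$ and cuts off around $3Q$, using only the restriction of $f$ to a compact subset of $\Omega$ and then appealing to the monotonicity inherent in $\gamma_\Theta(E)\geq\gamma_\Theta(E\cap Q)$. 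Your argument can be repaired along the same lines, but as written it relies on an assumption not granted by the hypotheses.

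\textbf{A point where you are more careful than the paper.} To apply Theorem~\ref{teoloc} to $\wt\nu=\Theta\wt f$ one needs bounds on $\|\nabla_x W*\wt\nu\|_\infty$ and $\|\partial_t^{1/2}W*\wt\nu\|_{*,p}$, not merely on $\|\nabla_x\wt f\|_\infty$ and $\|\partial_t^{1/2}\wt f\|_{*,p}$; this requires identifying $W*\wt\nu$ with $\wt f$. The paper passes over this silently, whereas you supply the uniqueness-for-the-Cauchy-problem argument (both $\wt f$ and $W*\wt\nu$ vanish for $t$ sufficiently negative, and their difference is caloric and of controlled growth). That is a genuine improvement in rigor. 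Your sketch of why $\partial_t^{1/2}(\Phi f)\in BMO_p$ via the Leibniz-type commutator identity is at the same level of detail as the paper's ``it is immediate to check,'' so neither version contains a full proof of that technical step, but your indication of the argument is sound.

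Apart from the $\Omega\supset E$ issue, the proof is correct and takes the same route as the paper's.
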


\begin{proof}
It is clear that if $E$ is Lipschitz caloric removable, then $\gamma_\Theta(E)=0$. Conversely, 
suppose that $E\subset\R^{n+1}$ is not Lipschitz caloric removable. So there exists some open set
$\Omega\subset \R^{n+1}$ and some function
$f:\R^{n+1}\to\R$ satisfying
$$\|\nabla_x f\|_{L^\infty(\Omega)}<\infty,\qquad \|\partial_t^{1/2} f\|_{*,\Omega,p} <\infty$$
and $\Theta (f)\equiv0$ in $\Omega\setminus E$ but $\Theta (f)\not\equiv0$ in $\Omega$ (in the distributional
sense). So there exists some (open) parabolic cube $Q\subset\Omega$ such that $4Q\subset \Omega$ and $\Theta (f)\not\equiv0$ in $Q$. Let $\chi$ be a non-negative $C^\infty$ function which equals $1$ in $2Q$ and vanishes
in $3Q^c$, and let $\wt f=\chi\,f$. It is immediate to check that $\wt f$ is Lipschitz in $\R^{n+1}$ and 
 $\partial_t^{1/2} \wt f\in BMO_p$. Consider the distribution $\nu=\Theta(\wt f)$. Since $\nu$ does not vanish
identically in $Q$, there exists some $C^\infty$ function $\vphi$ supported on $Q$ such that $\langle \nu,\vphi\rangle>0$. Now take $g=W*(\vphi\nu)$. By Theorem \ref{teoloc},
$$\|\nabla_x g\|_\infty<\infty,\qquad \|\partial_t^{1/2} g\|_{*,p} <\infty,$$
and thus, since $\supp(\vphi\nu)\subset Q\cap E$,
$$\gamma_\Theta(E) \geq \gamma_\Theta(Q\cap E) \geq \frac{\langle \vphi\,\nu,1\rangle}{\max\big(
\|\nabla_x g\|_\infty,\,\|\partial_t^{1/2} g\|_{*,p}\big)} = 
\frac{\langle \nu,\,\vphi\rangle}{\max\big(
\|\nabla_x g\|_\infty,\,\|\partial_t^{1/2} g\|_{*,p}\big)}
>0.$$
\end{proof}
\vv

From the preceding lemmas, it is clear that, for any compact set $E\subset\R^{n+1}$,
\begin{itemize} 
\item if $\dim_{H,p}(E)>n+1$, then $E$ is not Lipschitz caloric removable,
\item if $\HH^{n+1}_{p}(E)=0$ (and so, in particular, if $\dim_{H,p}(E)<n+1$), then 
$E$ is Lipschitz caloric removable.
\end{itemize}
Thus the critical parabolic Hausdorff dimension for Lipschitz caloric removability (and for $\gamma_\Theta$) is $n+1$.

\vv
Next we consider the operator
$$T\nu = \nabla_x W * \nu,$$
defined over distributions $\nu$ in $\R^{n+1}$. When $\mu$ is a finite measure, one can easily check that $T\mu(\bar x)$ is defined for
$m$-a.e.\ $\bar x\in\R^{n+1}$ by the integral 
$$T\mu(\bar x) = \int \nabla_x W(\bar x-\bar y)\,d\mu(\bar y).$$
For $\ve>0$, we also consider the truncated operator
$$T_\ve\mu(\bar x) = \int_{|\bar x-\bar y|>\ve} \nabla_x W(\bar x- \bar y)\,d\mu(\bar y),
$$
whenever the integral makes sense,
and for a function $f\in L^1_{loc}(\mu)$, we write
$$T_{\mu} f\equiv T (f\,\mu),\qquad T_{\mu,\ve} f\equiv T_\ve (f\,\mu).$$
We also denote
$$T_*\mu(x) = \sup_{\ve>0} |T_\ve \mu(x)|,\quad  T_{*,\mu}f(x) = \sup_{\ve>0} |T_\ve(f\, \mu)(x)|.$$
We say that $T_\mu$ is bounded in $L^2(\mu)$ if the operators $T_{\mu,\ve}$ are bounded in $L^2(\mu)$
uniformly on $\ve>0$.

Remark that $T$ is a singular integral operator with Calder\'on-Zygmund kernel in the parabolic space.
More precisely:

\begin{lemma}\label{bound}
The kernel $K\equiv \nabla_xW$ of $T$ satisfies the following:
\begin{itemize}
\item[(a)] $|K(\bar x)|\lesssim \dfrac1{|\bar x|_p^{n+1}}$\; for all $\bar x\neq0$.

\item[(b)] $|\nabla_x K(\bar x)|\lesssim \dfrac1{|\bar x|_p^{n+2}}$ \;and \;
$|\partial_t K(\bar x)|\lesssim \dfrac1{|\bar x|_p^{n+3}}$ for all $\bar x\neq0$.

\item[(c)] For all $\bar x,\bar x'\in \R^{n+1}$ such that $|\bar x- \bar x'|_p\le |\bar x|_p/2$, $\bar x\neq0$,
$$|K(\bar x) - K(\bar x')|\lesssim \frac{|\bar x - \bar x'|_p}{|\bar x|_p^{n+2}}.$$
\end{itemize}
\end{lemma}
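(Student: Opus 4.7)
The bound (a) is already contained in Lemma \ref{estimates} (it is the second inequality there), so nothing new has to be done for that piece.

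For (b), I would compute the derivatives explicitly and reuse the trick from the proof of Lemma \ref{estimates}, namely $e^{-|y|} \lesssim \min(|y|^{-a}, |y|^{-b})$ for whatever exponents $a,b$ are convenient. Starting from
$$\nabla_x W(x,t) = c\,\frac{x}{t^{n/2+1}}\,e^{-|x|^2/(4t)}\,\chi_{\{t>0\}},$$
differentiation in $x$ produces terms of the form $t^{-n/2-1}\,e^{-|x|^2/(4t)}$ (when hitting $x$) and $x_ix_j\,t^{-n/2-2}\,e^{-|x|^2/(4t)}$ (when hitting the exponential), and differentiation in $t$ gives, in addition, terms with an extra $t^{-1}$ or $|x|^2\,t^{-2}$. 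In each case, applying the Gaussian bound $e^{-|x|^2/(4t)} \lesssim \min(t^{1/2}/|x|,\,(t/|x|^2)^{m/2})$ for a sufficiently large integer $m$ (and distinguishing the regimes $|x|\geq t^{1/2}$ and $|x|< t^{1/2}$ as in the proof of Lemma \ref{estimates}) yields
$$|\nabla_x K(\bar x)|\lesssim \frac1{\max(t^{(n+2)/2},\,|x|^{n+2})} = \frac1{|\bar x|_p^{n+2}},\qquad
|\partial_t K(\bar x)|\lesssim \frac1{\max(t^{(n+3)/2},\,|x|^{n+3})} = \frac1{|\bar x|_p^{n+3}},$$
exactly the bounds demanded. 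The only genuinely delicate point is that $K$ vanishes identically on $\{t\leq 0\}$ and one wants the same bounds there; however, since $e^{-|x|^2/(4t)}$ vanishes to infinite order as $t\to 0^+$ away from $x=0$, the kernel $K$ extends to a $C^\infty$ function on $\R^{n+1}\setminus\{(0,0)\}$, and the derivative estimates hold trivially on $\{t\leq 0\}$ (both sides are zero).

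For (c) I would run the standard two-step mean value argument in the parabolic metric. Writing $\bar x=(x,t)$, $\bar x'=(x',t')$, and assuming $|\bar x-\bar x'|_p\leq|\bar x|_p/2$, the parabolic triangle inequality gives $|\bar y|_p\geq |\bar x|_p/2$ for every point $\bar y$ in the parabolic ball $B_p(\bar x,|\bar x|_p/2)$; in particular, this ball does not contain the origin, so $K$ is smooth on it. Splitting
$$|K(\bar x)-K(\bar x')|\leq |K(x,t)-K(x',t)| + |K(x',t)-K(x',t')|$$
and noting that both line segments $s\mapsto (x+s(x'-x),t)$ and $s\mapsto (x',t+s(t'-t))$ lie inside $B_p(\bar x,|\bar x|_p/2)$, the fundamental theorem of calculus together with part (b) gives
$$|K(x,t)-K(x',t)|\lesssim \frac{|x-x'|}{|\bar x|_p^{n+2}},\qquad
|K(x',t)-K(x',t')|\lesssim \frac{|t-t'|}{|\bar x|_p^{n+3}}.$$
The first term is already bounded by $|\bar x-\bar x'|_p/|\bar x|_p^{n+2}$. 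For the second, using $|t-t'|\leq |\bar x-\bar x'|_p^2$ and the hypothesis $|\bar x-\bar x'|_p\leq|\bar x|_p$, one gets
$$\frac{|t-t'|}{|\bar x|_p^{n+3}}\leq \frac{|\bar x-\bar x'|_p^{\,2}}{|\bar x|_p^{n+3}}\leq \frac{|\bar x-\bar x'|_p}{|\bar x|_p^{n+2}},$$
which completes the proof. The main obstacle, such as it is, is simply the bookkeeping in the explicit differentiations for (b); the smoothness issue at $t=0$ is handled once and for all by the infinite order vanishing of the Gaussian, and (c) then reduces to a routine mean value argument adapted to the parabolic distance.
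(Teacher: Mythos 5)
Your proposal is correct and follows essentially the same route as the paper: part (a) is cited from Lemma \ref{estimates}, part (b) is obtained by the same explicit differentiation and $e^{-|y|}\lesssim\min(|y|^{-a},|y|^{-b})$ device used there, and part (c) is the identical two-step mean-value argument through the intermediate point $(x',t)$ with the observation $|t-t'|\le|\bar x-\bar x'|_p^2\le|\bar x-\bar x'|_p\,|\bar x|_p$. Your two small additions — spelling out that the Gaussian's infinite-order vanishing at $t=0^+$ makes $K$ smooth across $\{t=0\}$ away from the origin, and verifying that both line segments stay inside $B_p(\bar x,|\bar x|_p/2)$, which excludes the origin — are points the paper leaves implicit and are worth keeping.
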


\begin{proof}
The estimate in (a) already appears in Lemma \ref{estimates}. The estimates in (b) follow by calculations analogous to the ones in that lemma. Finally, (c) is an easy consequence of (b). Indeed,
given $\bar x,\bar x'\in \R^{n+1}$ such that $|\bar x- \bar x'|_p\le |\bar x|/2$, write
$$\bar x = (x,t),\quad \bar x' = (x',t'),\quad \hat x = (x',t).$$
Then 
\begin{align*}
|K(\bar x) - K(\bar x')| & \leq |K(\bar x) - K(\hat x)| + |K(\hat x) - K(\bar x')|\\
& \leq |x-x'|\,\sup_{y\in[x,x']} |\nabla_x K((y,t))| + 
|t-t'|\,\sup_{s\in[t,t']} |\partial_t K((\bar x',s))| \\
& \lesssim  
\frac{| x - x'|}{|\bar x|_p^{n+2}} + \frac{|t - t'|}{|\bar x|_p^{n+3}} \lesssim \frac{|\bar x -\bar x'|}{|\bar x|_p^{n+2}}.
\end{align*}
\end{proof}

\vv

Recall that given $E\subset\R^{n+1}$, we denote by $\Sigma(E)$ the family of (positive) Borel measures $\mu$ supported on $E$
which have upper parabolic growth of degree $n+1$ with constant $1$, that is,
$$\mu(B_p(\bar x,r))\leq r^{n+1}\quad\mbox{ for all $\bar x\in\R^{n+1},\, r>0$.}$$
Given $E\subset \R^{n+1}$,
we define
\begin{equation}\label{eqsupgame1'}
\wt\gamma_{\Theta,+}(E) =\sup\mu(E),
\end{equation}
where the supremum is taken over all measures $\mu\in\Sigma(E)$ such that
\begin{equation}\label{eqsupgame2'}
\|T\mu\|_{L^\infty(\R^{n+1})} \leq 1,\qquad \|T^*\mu\|_{L^\infty(\R^{n+1})} \leq 1.
\end{equation}
Here $T^*$ is dual of $T$. That is,

$$T^*\mu(\bar x) = \int K(\bar y-\bar x)\,d\mu(\bar y).$$

In the next theorem, among other things, we characterize $\wt\gamma_{\Theta,+}(E)$ in terms of the measures in $\Sigma(E)$
such that $T_\mu$ is bounded in $L^2(\mu)$.

\begin{theorem}
The following holds, for any set $E\subset\R^{n+1}$:
$$
\wt\gamma_{\Theta,+}(E)\lesssim \gamma_{\Theta,+}(E) \approx \sup\{\mu(E):\mu\in\Sigma(E),\,\|T\mu\|_{L^\infty(\R^{n+1})}\leq1\}.
$$
Also,
$$
\wt\gamma_{\Theta,+}(E) \approx\sup\{\mu(E):\mu\in\Sigma(E),\,\|T_\mu\|_{L^2(\mu)\to L^2(\mu)}\leq1\}.
$$
All the implicit constants in the above estimates are independent of $E$.
\end{theorem}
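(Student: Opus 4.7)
The proof decomposes naturally into two halves. In the first half I would establish the middle equivalence
$$
\gamma_{\Theta,+}(E) \approx \gamma'(E):=\sup\{\mu(E):\mu\in\Sigma(E),\,\|T\mu\|_{L^\infty(\R^{n+1})}\leq 1\}
$$
and deduce the bound $\wt\gamma_{\Theta,+}(E)\lesssim \gamma_{\Theta,+}(E)$ as a corollary. The inequality $\gamma'(E)\lesssim \gamma_{\Theta,+}(E)$ is an immediate consequence of Lemma~\ref{propo1}: for $\mu\in\Sigma(E)$ with $\|T\mu\|_\infty\leq 1$, that lemma yields $\|\partial_t^{1/2}W*\mu\|_{*,p}\leq C$ for an absolute $C$, so $\mu/C$ is admissible in the definition of $\gamma_{\Theta,+}$. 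The reverse inequality $\gamma_{\Theta,+}(E)\lesssim \gamma'(E)$ follows from Lemma~\ref{lemgrow}: any positive measure admissible for $\gamma_{\Theta,+}$ has upper parabolic $C$-growth of degree $n+1$, so after dividing by that absolute $C$ the measure lies in $\Sigma(E)$ and the condition $\|T\mu\|_\infty\leq 1$ is preserved. Finally, since every measure competing in $\wt\gamma_{\Theta,+}$ automatically satisfies the defining conditions of $\gamma'$, we get $\wt\gamma_{\Theta,+}(E)\leq \gamma'(E)\approx \gamma_{\Theta,+}(E)$.

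The second half is the $L^2$ characterization $\wt\gamma_{\Theta,+}(E)\approx \gamma_{\mathrm{op}}(E):=\sup\{\mu(E):\mu\in\Sigma(E),\,\|T_\mu\|_{L^2(\mu)\to L^2(\mu)}\leq 1\}$. For the direction $\wt\gamma_{\Theta,+}\lesssim \gamma_{\mathrm{op}}$, I would take $\mu$ competing in $\wt\gamma_{\Theta,+}$. Lemma~\ref{bound} tells us that $K=\nabla_x W$ is a Calder\'on--Zygmund kernel of order $n+1$ in the parabolic metric, and $\mu\in\Sigma(E)$ has the appropriate polynomial growth. The hypotheses $\|T\mu\|_\infty\leq 1$ and $\|T^*\mu\|_\infty\leq 1$ place both $T_\mu 1=T\mu$ and $T_\mu^*1=T^*\mu$ in $L^\infty\subset \mathrm{RBMO}_p(\mu)$, so the non-homogeneous $T1$ theorem of Nazarov--Treil--Volberg / Tolsa, adapted to the parabolic setting, concludes $\|T_\mu\|_{L^2(\mu)\to L^2(\mu)}\lesssim 1$. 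After rescaling by an absolute constant, the measure $\mu/C$ competes in $\gamma_{\mathrm{op}}$, giving the desired bound.

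For the reverse direction $\gamma_{\mathrm{op}}\lesssim \wt\gamma_{\Theta,+}$, I would use the standard good-$\lambda$ extraction of a sub-measure, analogous to the one used for analytic capacity $\gamma_+$. Given $\mu\in\Sigma(E)$ with $\|T_\mu\|_{L^2(\mu)}\leq 1$, non-homogeneous Calder\'on--Zygmund theory gives the corresponding $L^2(\mu)$ bound for the maximal truncations $T_{*,\mu}$ and $T^*_{*,\mu}$. Applying these to $\chi_E\in L^2(\mu)$ and invoking Chebyshev, the set
$$
F:=\{\bar x\in E:T_{*,\mu}(\chi_E)(\bar x)\leq C_0\}\cap \{\bar x\in E:T^*_{*,\mu}(\chi_E)(\bar x)\leq C_0\}
$$
satisfies $\mu(F)\geq \mu(E)/2$ provided $C_0$ is a sufficiently large absolute constant. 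One then promotes the pointwise bounds of $T_\ve\mu$ and $T_\ve^*\mu$ on $F$ into \emph{global} $L^\infty$ bounds for $T(\chi_F\mu)$ and $T^*(\chi_F\mu)$ by a stopping-time argument exploiting the kernel smoothness of Lemma~\ref{bound}(c) together with the parabolic growth of $\mu|_F$: at a point $\bar x\notin F$ one compares with a nearest-neighbour point of $F$ using the smoothness estimate, and controls the near contribution by the growth of the measure. Rescaling $\chi_F\mu$ by the resulting absolute constant yields an admissible measure for $\wt\gamma_{\Theta,+}$, so $\mu(E)\lesssim \mu(F)\lesssim \wt\gamma_{\Theta,+}(E)$. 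The main obstacle in the whole argument is in Part~2: both invoking the $T1$ theorem (where the lack of antisymmetry of $K$ means the weak boundedness property is not automatic, but should follow from the symmetric pair of $L^\infty$ hypotheses) and carrying out the extraction of a pointwise-bounded sub-measure from an $L^2$-bound require a careful transposition of non-homogeneous Calder\'on--Zygmund theory to the parabolic metric context.
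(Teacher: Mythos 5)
Your first half (the equivalence $\gamma_{\Theta,+}(E)\approx S_1$ and the immediate corollary $\wt\gamma_{\Theta,+}(E)\leq S_1$) matches the paper's argument exactly: the forward inequality is Lemma~\ref{lemgrow}, the reverse is Lemma~\ref{propo1}. No issues there.

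In the direction $\wt\gamma_{\Theta,+}(E)\lesssim S_2$, your overall idea is the one in the paper, but the part you flag as a ``main obstacle''--- the weak boundedness property --- is exactly where the work is, and your proposal leaves it unresolved (``should follow from the symmetric pair of $L^\infty$ hypotheses'' is a hope, not an argument). The paper's verification does not drop out of the $L^\infty$ hypotheses alone: it first applies a Cotlar-type inequality to upgrade to $\|T_\ve\mu\|_{L^\infty(\mu)}\lesssim 1$ uniformly, then invokes the Hyt\"onen--Martikainen $Tb$ theorem (with $b=1$) for non-doubling measures on a geometrically doubling space, \emph{restricted to balls with thin boundary}. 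To check WBP for such a ball $B$ one takes a smooth bump $\vphi$ adapted to $2B$ and uses the localization theorem (Theorem~\ref{teoloc}, via Lemma~\ref{propo1}) to get $\|T(\vphi\mu)\|_\infty\lesssim 1$, and then estimates $\int_B|T_{\mu,\ve}(\vphi-\chi_B)|\,d\mu$ directly using the thin-boundary decomposition into dyadic shells near $\partial B$. In other words, the localization theorem --- the main technical result of Section~\ref{section-localization} --- is the tool that makes the WBP check go through; without appealing to it, your sketch has a genuine missing step. (Also the reference should be Hyt\"onen--Martikainen rather than NTV: one needs the metric-space, geometrically-doubling version.)

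The reverse direction $S_2\lesssim\wt\gamma_{\Theta,+}(E)$ is where the proposal really fails. Your Chebyshev step gives a set $F$ with $\mu(F)\geq\mu(E)/2$ on which $T_*\mu$ and $T_*^*\mu$ are pointwise $\leq C_0$. That controls $T_\ve\mu$ and $T_\ve^*\mu$ on $F$, but it does \emph{not} control $T_\ve(\chi_F\mu)$ or $T_\ve^*(\chi_F\mu)$, since $T\mu = T(\chi_F\mu)+T(\chi_{E\setminus F}\mu)$ and you have no handle on the second term. The stopping-time ``promotion'' you gesture at uses the smoothness in Lemma~\ref{bound}(c) to extend bounds from $F$ to $\R^{n+1}\setminus F$, but first you need the bound \emph{on $F$ for the potential of $\chi_F\mu$}, which is precisely what is missing. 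For an antisymmetric kernel there is a symmetrization shortcut (the bilinear form $\langle T(\chi_F\mu),\chi_F\mu\rangle$ can be controlled and bootstrapped), but $K=\nabla_x W$ is not antisymmetric, so that route is unavailable. The paper avoids this entirely: from $\|T_\mu\|_{L^2(\mu)\to L^2(\mu)}\leq 1$ it deduces the weak $(1,1)$ estimate from $M(\R^{n+1})$ to $L^{1,\infty}(\mu)$ for both $T_\ve$ and $T_\ve^*$, then \emph{dualizes} these estimates (Davie--\O ksendal style; see Lemma 4.2 of Mattila--Paramonov for the vectorial version that handles $T$ and $T^*$ simultaneously) to produce a function $h:E\to[0,1]$ with $\int h\,d\mu\gtrsim\mu(E)$ and $\|T(h\mu)\|_\infty$, $\|T^*(h\mu)\|_\infty\lesssim 1$, and finishes with Cotlar's inequality. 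This dualization step is the mechanism that replaces your good-$\lambda$ extraction, and the absence of it is a real gap in your proposal.
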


\begin{proof}
Denote
\begin{align*}
S_1& = \sup\{\mu(E):\mu\in\Sigma(E),\,\|T\mu\|_{L^\infty(\R^{n+1})}\leq1\}, \\
S_2 & =\sup\{\mu(E):\mu\in\Sigma(E),\,\|T_\mu\|_{L^2(\mu)\to L^2(\mu)}\leq1\}.
\end{align*}

Notice first the trivial fact that $\wt\gamma_{\Theta,+}(E)\leq S_1$.
The fact that $S_1\gtrsim \gamma_{\Theta,+}(E)$ is an immediate consequence of the definition of 
$\gamma_{\Theta,+}$ and Lemma \ref{lemgrow}. The converse estimate follows from the fact that 
if $\|T\mu\|_{L^\infty(\R^{n+1})}\leq1$ and $\mu$ has upper parabolic growth of degree $n+1$ with constant $1$, then $\|\partial_t^{1/2} W * \mu\|_{*,p}\lesssim 1$, by Lemma \ref{propo1}.

The arguments to show that $\wt\gamma_{\Theta,+}(E)\approx S_2$ are standard. Indeed, let $\mu\in\Sigma(E)$ be such that
$\wt\gamma_{\Theta,+}(E)\leq 2\mu(E)$ and $\|T\mu\|_{L^\infty(\R^{n+1})}\leq1$, $\|T^*\mu\|_{L^\infty(\R^{n+1})}\leq1$. By a Cotlar type inequality analogous to the one in \cite[Lemma 5.4]{Mattila-Paramonov}, say, one deduces that
\begin{equation}\label{ellinf74}
\|T_\ve\mu\|_{L^\infty(\mu)} \lesssim 1,\qquad \|T_\ve^*\mu\|_{L^\infty(\mu)} \lesssim 1,
\end{equation}
uniformly on $\ve>0$.

To obtain the boundedness of the operator $T_{\mu}$ in $L^2(\mu)$ we will use the
  $Tb$ theorem of Hyt\"onen and Martikainen \cite[Theorem 2.3]{Hytonen-Martikainen} for non-doubling measures in geometrically doubling spaces. Remark that the parabolic space is geometrically doubling
  (with the distance $\dist_p$) and thus we can apply that theorem $Tb$ theorem, with the choice $b=1$. 
 Taking into account the conditions \rf{ellinf74}, to ensure that $T_\mu$ is bounded in $L^2(\mu)$,   
  by \cite[Theorem 2.3]{Hytonen-Martikainen} it is enough 
to check that the weak boundedness property 
is satisfied for balls with thin boundaries. That is, for some fixed $A>0$,
\begin{equation}\label{eqweak93}
|\langle T_{\mu,\ve}\chi_B, \chi_B\rangle| \le C\mu(2B),\quad\mbox{ for any a parabolic ball $B\subset\R^{n+1}$ with $A$-thin boundary},
\end{equation}
uniformly on $\ve>0$.
A parabolic ball of radius $r(B)$ is said to have $A$-thin boundary if
\begin{equation}\label{thin}
\mu\{x:\dist_p(x,\partial B)\le tr(B)\}\le A\,t\mu(2B)\quad \mbox{ for all $t\in(0,1)$,}
\end{equation}
See \cite[Lemma 9.43]{Tolsa-llibre} regarding the abundance of such balls, if one chooses $A$ appropriately (just depending on $n$). 

To prove \rf{eqweak93}, let us consider a $C^{\infty}$ function $\vphi$ with compact support in $2B$ such that $\vphi\equiv1$ on $B$ and write
$$
 |\langle T_{\mu,\ve}\chi_B,\chi_B\rangle|\le \int_B|T_{\mu,\ve}\vphi|d\mu+\int_B|T_{\mu,\ve}(\vphi-\chi_B)|d\mu.
$$
Since $\|T\mu\|_{L^\infty(\R^{n+1})}\leq1$, by Lemma \ref{propo1} and Theorem \ref{teoloc} we have $\|T(\vphi\mu)\|_{L^\infty(\R^{n+1})}\leq1$, which in turn implies that $\|T_\ve(\vphi\mu)\|_{L^\infty(\R^{n+1})}\leq1$ uniformly on $\ve>0$. So we deduce that first integral on the right side
is bounded by $C\mu(B)$. 
To get a bound of the second integral we will use that B has a thin boundary and the property (a) in Lemma \ref{bound}. The estimates are very standard but we write the details for
the convenience of the reader:
\begin{align*}
\int_B|T_{\mu,\ve}(\vphi-\chi_B)|d\mu &\lesssim \int_{2B\setminus B}\int_B\frac{d\mu(y)}{|x-y|_p^{n+1}}d\mu(x)\\
&
\le \sum_{j\ge 0}\int_{\{x\notin B:\dist_p(x;\partial B)\approx\frac{r(B)}{2^j}\}}\int_B\frac{d\mu(y)}{|x-y|_p^{n+1}}d\mu(x).
\end{align*}
Given $j$  and $x\notin B$ such that $\dist_p(x,\partial B)\approx\frac{r(B)}{2^j},$ since $\mu\in \Sigma(E)$ one has 
$$
\int_B\frac{d\mu(y)}{|x-y|_p^{n+1}}d\mu(x)\lesssim\sum_{k=-1}^{k=j}\int_{|x-y|_p\approx\frac{r(B)}{2^k}} 
\frac{d\mu(y)}{|x-y|_p^{n+1}}d\mu(x)\lesssim \sum_{k=-1}^{k=j}\frac{\mu(B(x,2^{-k}r(B))}{(r(B)2^{-k})^{n+1}}\lesssim j+2. 
$$
Therefore, by \rf{thin}
$$
\int_B|T_{\mu,\ve}(\vphi-\chi_B)|\,d\mu\lesssim 
\sum_{j\ge 1}(j+2)\,\mu(\{x: \dist_p(x,\partial B)\approx 2^{-j}r(B)\})\lesssim \sum_{j\ge 0}\frac{j+2}{2^j}\mu(2B) \lesssim \mu(2B).
$$
Consequently,  the weak boundedness property \rf{eqweak93} holds and so  
 $T_\mu$ is bounded in $L^2(\mu)$, with
$\|T_\mu\|_{L^2(\mu)\to L^2(\mu)}\lesssim1$. This gives that $S_2\gtrsim \wt\gamma_{\Theta,+}(E)$.

\vv
To prove the converse estimate, let $\mu\in\Sigma(E)$ be such that $\|T_\mu\|_{L^2(\mu)\to L^2(\mu)}\leq1$
and $S_2\leq 2\mu(E)$. From the $L^2(\mu)$ boundedness of $T_\mu$, one deduces that $T$ and $T^*$ are bounded
from the space of finite signed measures $M(\R^{n+1})$ to $L^{1,\infty}(\mu)$. That is, there exists some constant $C>0$ such that for any measure
$\nu\in M(\R^{n+1})$, any $\ve>0$, and any $\lambda>0$,
$$\mu\big(\big\{x\in\R^{n+1}:|T_\ve \nu(x)|>\lambda \big\}\big) \leq C\,\frac{\|\nu\|}\lambda,$$
and the same replacing $T_\ve$ by $T^*_\ve$.
The proof of this fact is analogous to the one of Theorem 2.16 in \cite{Tolsa-llibre}\footnote{For the application of the arguments in \cite{Tolsa-llibre}, notice that the Besicovitch
covering theorem with respect to parabolic balls is valid. Alternatively, see Theorem 5.1 from
\cite{NTV-weak11}.}.
Then, by a well known dualization of these estimates (essentially due to Davie and {\O}ksendal) and an application of Cotlar's inequality, one deduces
that there exists some function $h:E\to [0,1]$ such that
$$\mu(E)\leq C\,\int h\,d\mu,\quad \; \|T(h\,\mu)\|_{L^\infty(\R^{n+1})}\leq 1,\quad \; \|T^*(h\,\mu)\|_{L^\infty(\R^{n+1})}\leq 1.$$
See Theorem 4.6 and Lemma 4.7 from \cite{Tolsa-llibre} for the analogous arguments in the case of analytic capacity and also Lemma 4.2 from \cite{Mattila-Paramonov} for the precise vectorial version of the dualization of the weak $(1,1)$ estimates required in our situation, for example. So we have
$$\wt\gamma_{\Theta,+}(E)\geq \int h\,d\mu\approx \mu(E)\approx S_2.$$
\end{proof}

\vv
\begin{example}
From the preceding theorem we deduce that any subset of positive measure $\HH^{n+1}_p$ of a
regular Lip$(1,1/2)$ graph is non-removable. In particular, any subset of positive measure $\HH^{n+1}_p$ of 
a non-horizontal hyperplane (i.e., not parallel to $\R^n\times\{0\}$) is non-removable.

Remark that any horizontal plane has parabolic Hausdorff dimension $n$, and thus any subset of a horizontal plane is removable.
\end{example}

\vv


\section{The existence of removable sets with positive measure $\HH^{n+1}_p$}

We need the following result, which is of independent interest.

\begin{theorem}\label{teomeas}
Let $E\subset\R^{n+1}$ be a compact set such that $\HH^{n+1}_p(E)<\infty$.
Let $\nu$ be a distribution supported on $E$ such that 
$$\|\nabla_x W*\nu\|_\infty \leq 1,\qquad \|\partial_t^{1/2} W*\nu\|_{*,p} \leq 1.$$
Then $\nu$ is a signed measure which is absolutely continuous with respect to $\HH^{n+1}_p|_E$ and 
there exists a Borel function $f:E\to\R$ such that $\nu=f\,\HH^{n+1}_p|_E$ satisfying 
$\|f\|_{L^\infty(\HH^{n+1}_p|_E)}\lesssim 1$.
\end{theorem}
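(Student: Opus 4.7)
The plan is to show that $\nu$ extends to a signed Radon measure whose total variation satisfies $|\nu|(B_p(\bar x,r))\lesssim r^{n+1}$ for every parabolic ball, and then to deduce absolute continuity with respect to $\HH^{n+1}_p|_E$ together with the $L^\infty$ density bound via Radon--Nikodym and the standard upper-density theorem for Hausdorff measures of sets of finite measure. The starting point is Lemma~\ref{lemgrow}, which already gives the distributional growth $|\langle\nu,\vphi\rangle|\lesssim \ell(Q)^{n+1}$ for every $\vphi$ admissible for a parabolic cube $Q$; the task is to upgrade this to a uniform bound against arbitrary $C^\infty_c$ test functions with the right dependence on $\|\psi\|_\infty$.

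For this upgrade, given $\psi\in C^\infty_c(U)$ and a small $\delta>0$, I would cover $U\cap E$ by parabolic balls $\{B_i\}$ of radii $r_i\le\delta$ with $\sum_i r_i^{n+1}\le \HH^{n+1}_p(U\cap E)+\delta$ (possible since $\HH^{n+1}_p(E)<\infty$), and take an associated parabolic Harvey--Polking partition of unity $\{\vphi_i\}$ (of the kind already used in the paper's proof of $\gamma_\Theta\lesssim \HH^{n+1}_{\infty,p}$): $\supp\vphi_i\subset 2B_i$, $c\vphi_i$ admissible for $2B_i$ for some absolute $c>0$, and $\sum_i\vphi_i\equiv 1$ on $\bigcup_i B_i$. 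Since $\supp\nu\subset E\subset\bigcup_i B_i$, one has $\langle\nu,\psi\rangle=\sum_i\langle\nu,\psi\vphi_i\rangle$, and each term is handled by splitting around the center $\bar x_i$ of $B_i$:
\[
\psi\,\vphi_i \;=\; \psi(\bar x_i)\,\vphi_i \;+\; (\psi-\psi(\bar x_i))\,\vphi_i.
\]
The constant piece contributes at most $\|\psi\|_\infty\, r_i^{n+1}$ by Lemma~\ref{lemgrow}. A parabolic Taylor expansion of $\psi$ at $\bar x_i$ shows that $(\psi-\psi(\bar x_i))\vphi_i$, after renormalization to the admissible scale of $2B_i$, carries an extra factor of order $r_i\|\nabla_x\psi\|_\infty+r_i^2(\|\Delta\psi\|_\infty+\|\partial_t\psi\|_\infty)$, so Lemma~\ref{lemgrow} bounds its contribution by $\lesssim r_i^{n+2}\|\nabla_x\psi\|_\infty+r_i^{n+3}(\|\Delta\psi\|_\infty+\|\partial_t\psi\|_\infty)$. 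Summing and sending $\delta\to 0$ with $\psi$ fixed kills the higher-order pieces and yields $|\langle\nu,\psi\rangle|\lesssim \|\psi\|_\infty \HH^{n+1}_p(U\cap E)$. Applying the same scheme to a smooth approximation of $\chi_{B_p(\bar x,r)}$ with a single-ball cover of scale $r$ (where the derivatives of $\psi$, of sizes $r^{-1}$ and $r^{-2}$, exactly cancel the extra factors) gives $|\nu|(B_p(\bar x,r))\lesssim r^{n+1}$, and Riesz representation then upgrades $\nu$ to a signed Radon measure with this parabolic growth.

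With the ball bound in hand, absolute continuity $\nu\ll \HH^{n+1}_p|_E$ is immediate: any set of zero parabolic Hausdorff measure can be covered by balls with $\sum r_i^{n+1}$ arbitrarily small, so it has $|\nu|$-measure zero. Thus $\nu=f\,\HH^{n+1}_p|_E$ for some Borel $f$ by Radon--Nikodym. The standard density theorem for Hausdorff measures gives
\[
\limsup_{r\to 0}\,\frac{\HH^{n+1}_p(E\cap B_p(\bar x,r))}{r^{n+1}}\;\gtrsim\;1
\]
for $\HH^{n+1}_p|_E$-a.e.\ $\bar x\in E$, and combining this with the ball bound $|\nu|(B_p(\bar x,r))\lesssim r^{n+1}$ and the Lebesgue differentiation theorem in $(\R^{n+1},\dist_p)$ (which applies since this space is geometrically doubling) yields
\[
|f(\bar x)|\;=\;\lim_{r\to 0}\frac{|\nu|(B_p(\bar x,r))}{\HH^{n+1}_p(E\cap B_p(\bar x,r))}\;\lesssim\;1
\]
$\HH^{n+1}_p|_E$-a.e., as required. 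The main obstacle is the bookkeeping in the middle step: verifying that $(\psi-\psi(\bar x_i))\vphi_i$, once rescaled to be admissible for $2B_i$, costs only the factor $r_i\|\nabla_x\psi\|_\infty+r_i^2(\|\Delta\psi\|_\infty+\|\partial_t\psi\|_\infty)$, so that the sums $\sum r_i^{n+2}$ and $\sum r_i^{n+3}$ vanish as the cover is refined while the main sum $\sum r_i^{n+1}$ stays comparable to $\HH^{n+1}_p(U\cap E)$.
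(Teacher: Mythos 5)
Your first step---showing that $\nu$ extends to a finite signed measure by pairing against a Harvey--Polking partition of unity adapted to a fine cover of $E$ and using Lemma~\ref{lemgrow}---is essentially the same as the paper's argument, modulo a cosmetic difference in how the normalization is organized: the paper divides $\vphi_i\psi$ by the full scaled norm $\|\psi\|_\infty+\ell(Q_i)\|\nabla_x\psi\|_\infty+\ell(Q_i)^2(\|\partial_t\psi\|_\infty+\|\Delta\psi\|_\infty)$, while you Taylor-expand around $\bar x_i$; both yield the same estimate, and sending the cover to scale zero gives $|\langle\nu,\psi\rangle|\lesssim\|\psi\|_\infty\,\HH^{n+1}_p(U\cap E)$ for $\psi\in C^\infty_c(U)$, hence $|\nu|(U)\lesssim \HH^{n+1}_p(\overline U\cap E)$ for open $U$.

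The gap is in the claimed uniform ball bound $|\nu|(B_p(\bar x,r))\lesssim r^{n+1}$ ``via a single-ball cover of scale $r$.'' Two things go wrong. First, pairing $\nu$ with one smooth bump $\psi\approx\chi_{B_p(\bar x,r)}$ only controls $\nu(B_p(\bar x,r))$, not the total variation $|\nu|(B_p(\bar x,r))$: to control $|\nu|$ on $B$ you must test against all $\psi$ with $|\psi|\le\chi_B$, and such $\psi$ can oscillate on arbitrarily fine scales, so the derivatives are not of size $r^{-1}$, $r^{-2}$, and a single-ball cover at scale $r$ gives nothing. Second, even if you do use fine covers, what you obtain is $|\nu|(B_p(\bar x,r))\lesssim\HH^{n+1}_p(E\cap \overline{B_p(\bar x,r)})$, which need \emph{not} be $\lesssim r^{n+1}$: the hypothesis $\HH^{n+1}_p(E)<\infty$ gives no upper Ahlfors regularity of $\HH^{n+1}_p|_E$, so $\HH^{n+1}_p(E\cap B_p(\bar x,r))$ can exceed $r^{n+1}$ by a lot for some balls. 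Only the $|\nu|$-a.e.\ $\limsup_{r\to 0}$ version of the bound is true. Your final Lebesgue-differentiation step quotes exactly this false uniform bound.

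The good news is that you do not need it: the open-set estimate $|\nu|(U)\lesssim\HH^{n+1}_p(\overline U\cap E)$ plus outer regularity of the two finite Radon measures $|\nu|$ and $\HH^{n+1}_p|_E$ already gives $|\nu|(A)\lesssim\HH^{n+1}_p|_E(A)$ for every Borel set $A$, and Radon--Nikodym then produces $f$ with $|f|\lesssim 1$ directly, with no density theorem or ball bound at all. The paper instead proceeds via the polar decomposition $\nu=g|\nu|$ with $g=\pm1$: near a $|\nu|$-Lebesgue point of $g$ the sign of $\nu$ is almost constant on $|\nu|$-doubling balls, so pairing with an admissible bump $\vphi_{\bar x,r}$ \emph{does} control $|\nu|(B_p(\bar x,r))$ there (not just $\nu(B_p(\bar x,r))$); adding a doubling-ball selection to handle non-doubling radii gives $\limsup_{r\to0}|\nu|(B_p(\bar x,r))/r^{n+1}\lesssim 1$ for $|\nu|$-a.e.\ $\bar x$, and then Mattila's Theorem~6.9 concludes. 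Either route works, but your argument as written asserts a ball estimate that is not true and is not what your covering lemma yields.
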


This theorem is an immediate consequence of Lemma \ref{lemgrow} and the following result.

\begin{lemma}
Let $E\subset\R^{n+1}$ be a compact set such that $\HH^{n+1}_p(E)<\infty$.
Let $\nu$ be a distribution supported on $E$  which has upper parabolic $1$ growth of degree $n+1$.
Then $\nu$ is a signed measure which is absolutely continuous with respect to $\HH^{n+1}_p|_E$ and 
there exists a Borel function $f:E\to\R$ such that $\nu=f\,\HH^{n+1}_p|_E$ satisfying 
$\|f\|_{L^\infty(\HH^{n+1}_p|_E)}\lesssim 1$.
\end{lemma}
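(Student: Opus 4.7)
The strategy mirrors the classical Euclidean arguments used for analytic and Lipschitz harmonic capacity (see, e.g., Theorem 1.27 in \cite{Tolsa-llibre}, adapted to the parabolic setting). The goal is to upgrade the upper parabolic growth of $\nu$ to the pointwise estimate $|\nu|(A)\lesssim \HH^{n+1}_p(A\cap E)$ for every Borel set $A$, via a covering/partition-of-unity argument, and then conclude by Radon--Nikodym.

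First I would show that $\nu$ extends to a locally finite signed Radon measure. Fix $\psi\in C_c^2(\R^{n+1})$ and $\delta>0$. Since $\HH^{n+1}_p(E)<\infty$, cover $E\cap\supp\psi$ by parabolic balls $\{B_i=B_p(\bar x_i,r_i)\}_i$ with $r_i<\delta$ and
$$\sum_i r_i^{n+1}\le \HH^{n+1}_p(E\cap\supp\psi)+\delta.$$
Using the parabolic version of the Harvey--Polking lemma (already invoked in the proof of the bound $\gamma_\Theta\lesssim\HH^{n+1}_{\infty,p}$), construct $C^2$ functions $\vphi_i$ subordinate to $\{2B_i\}$ with
$$\supp\vphi_i\subset 2B_i,\qquad \|\nabla_x\vphi_i\|_\infty\lesssim 1/r_i,\qquad \|\nabla_x^2\vphi_i\|_\infty+\|\partial_t\vphi_i\|_\infty\lesssim 1/r_i^2,$$
and such that $\eta:=\sum_i\vphi_i$ equals $1$ on a neighborhood of $E\cap\supp\psi$ (with bounded overlap of the $\{2B_i\}$). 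Then $(1-\eta)\psi$ is smooth and supported in $\supp\psi\setminus(E\cap\supp\psi)\subset\R^{n+1}\setminus E$, so since $\supp\nu\subset E$,
$$\langle\nu,\psi\rangle=\langle\nu,\eta\psi\rangle=\sum_i\langle\nu,\vphi_i\psi\rangle.$$

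Next I would estimate each $\langle\nu,\vphi_i\psi\rangle$ using the admissibility hypothesis. A direct computation gives
$$\|\nabla_x(\vphi_i\psi)\|_\infty\lesssim \|\psi\|_\infty/r_i+\|\nabla_x\psi\|_\infty,\quad \|\Delta(\vphi_i\psi)\|_\infty+\|\partial_t(\vphi_i\psi)\|_\infty\lesssim \|\psi\|_{C^2}/r_i^2,$$
(with the obvious meaning for $\|\psi\|_{C^2}$). Choosing $\delta=\delta(\psi)$ small enough so that $1/r_i$ dominates the lower order derivatives of $\psi$ whenever $r_i<\delta$, the function $\vphi_i\psi/(C\|\psi\|_\infty)$ is admissible for a parabolic cube containing $2B_i$ of side $\sim r_i$. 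The upper parabolic $1$-growth of $\nu$ then yields
$$|\langle\nu,\vphi_i\psi\rangle|\lesssim \|\psi\|_\infty\, r_i^{n+1}.$$
Summing and letting $\delta\to 0$,
$$|\langle\nu,\psi\rangle|\lesssim \|\psi\|_\infty\,\HH^{n+1}_p(E\cap\supp\psi).$$
Density of $C^2_c$ in $C_c$ extends this bound to all continuous compactly supported $\psi$, and the Riesz representation theorem identifies $\nu$ with a locally finite signed Borel measure on $\R^{n+1}$ supported on $E$, whose total variation satisfies $|\nu|(U)\lesssim \HH^{n+1}_p(U\cap E)$ for every open set $U$ (take the supremum of $|\langle\nu,\psi\rangle|$ over $\psi\in C_c(U)$ with $\|\psi\|_\infty\le 1$).

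Finally, by outer regularity of $|\nu|$ together with the Borel regularity of $\HH^{n+1}_p|_E$ (which is a finite Borel regular measure since $E$ is compact and $\HH^{n+1}_p(E)<\infty$), the estimate propagates to all Borel sets:
$$|\nu|(A)\lesssim \HH^{n+1}_p(A\cap E)\qquad\text{for every Borel }A\subset\R^{n+1}.$$
This gives $\nu\ll\HH^{n+1}_p|_E$, and the Radon--Nikodym theorem produces a Borel function $f$ with $\nu=f\,\HH^{n+1}_p|_E$ and $\|f\|_{L^\infty(\HH^{n+1}_p|_E)}\lesssim 1$.

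\textbf{Main obstacle.} The only real technical point is the admissibility bookkeeping in the second step: one must check that for $r_i$ sufficiently small (depending on $\psi$) the smoothness of $\psi$ itself becomes irrelevant and only the sup-norm $\|\psi\|_\infty$ appears in the final constant. This is what makes the bound extend from smooth test functions to continuous ones and what ultimately delivers the sharp $\|f\|_\infty\lesssim 1$ rather than a bound involving higher derivatives.
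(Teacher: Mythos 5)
Your argument is correct, and it coincides with the paper's in the first half but diverges in the second. Both proofs start by covering $E$ (or, in your case, $E\cap\supp\psi$) by small parabolic cubes/balls with total parabolic content controlled by $\HH^{n+1}_p$, build a Harvey--Polking partition of unity, check that the rescaled products $\vphi_i\psi$ are admissible for $r_i$ small relative to the $C^2$ norms of $\psi$, and invoke the upper parabolic growth hypothesis to sum up and conclude via Riesz representation that $\nu$ is a finite signed measure. The difference: you cover $E\cap\supp\psi$ rather than $E$, which produces the \emph{localized} estimate $|\langle\nu,\psi\rangle|\lesssim\|\psi\|_\infty\,\HH^{n+1}_p(E\cap\supp\psi)$. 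From that you read off $|\nu|(U)\lesssim\HH^{n+1}_p(U\cap E)$ for open $U$ and then pass to Borel sets by outer regularity of both $|\nu|$ and $\HH^{n+1}_p|_E$ (the latter being a finite Borel measure on a metric space, hence Radon), after which Radon--Nikodym finishes. The paper instead obtains the non-localized bound $|\langle\nu,\psi\rangle|\lesssim\HH^{n+1}_p(E)\|\psi\|_\infty$, and then carries out a separate, harder pointwise-density argument: it proves $\limsup_{r\to0}|\nu|(B_p(\bar x,r))/r^{n+1}\lesssim 1$ for $|\nu|$-a.e.\ $\bar x$ by working at Lebesgue points of $g=d\nu/d|\nu|$, treating $|\nu|$-doubling scales first and then bridging the non-doubling scales, and concludes via a parabolic analogue of Mattila's upper-density theorem. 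Your route is softer and shorter, buying you the conclusion for the price of one regularity fact; the paper's route is more hands-on and yields as a byproduct the pointwise density estimate, which can be of independent geometric use. Minor bookkeeping points worth making explicit if you write this up fully: when you pass from $C^2_c$ to $C_c$ by mollification you need the continuity $\HH^{n+1}_p(E\cap K_\ve)\to\HH^{n+1}_p(E\cap K)$ as $\ve\to0$, which holds because $\HH^{n+1}_p|_E$ is a finite measure and $K_\ve\downarrow K$; and the identity $|\nu|(U)=\sup\{|\langle\nu,\psi\rangle|:\psi\in C_c(U),\|\psi\|_\infty\leq1\}$ should be cited or proved (it is the standard dual description of total variation for Radon measures).
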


\begin{proof}
First we will show that $\nu$ is a signed measure. By the Riesz representation theorem, it is enough
to show that, for any $C^\infty$ function $\psi$ with compact support,
\begin{equation}\label{eq001}
|\langle \nu,\psi\rangle|\leq C(E)\,\|\psi\|_\infty,
\end{equation}
where $C(E)$ is some constant depending on $E$.

To prove \rf{eq001}, we fix $\ve>0$ and we consider a family of open parabolic cubes $Q_i$, 
$i\in I_\ve$, such that
\begin{itemize}
\item $E\subset \bigcup_{i\in I_\ve} Q_i$,
\item $\ell(Q_i)\leq \ve$ for all $i\in I_\ve$, and
\item $\sum_{i\in I_\ve} \ell(Q_i)^{n+1} \leq C\,\HH_p^{n+1}(E) + \ve$.
\end{itemize}
Since $E$ is compact, we can assume that $I_\ve$ is finite. 
By standard arguments, we can find a family of non-negative functions $\vphi_i$, $i\in I_\ve$, such that
\begin{itemize}
\item each $\vphi_i$ is supported on $2Q_i$ and $c\vphi_i$ is admissible for $2Q_i$, for some
absolute constant $c>0$, 
\item $\sum_{i\in I_\ve} \vphi_i = 1$ on $\bigcup_{i\in I_\ve} Q_i$, and in particular on $E$.
\end{itemize}
Indeed, to construct the family of functions $\vphi_i$ we can cover each cube $Q_i$ by a bounded number (depending on $n$)  
dyadic parabolic cubes $R_i^1,\cdots,R_i^m$ with side length $\ell(R_i^j)\leq \ell(Q_i)/8$ and then apply
the usual Harvey-Polking lemma (\cite[Lemma 3.1]{Harvey-Polking} to the family of cubes $\{R_{i,j}\}$.

We write
$$|\langle \nu,\psi\rangle|\leq \sum_{i\in I_\ve} |\langle \nu,\vphi_i\psi\rangle|.$$
For each $i\in I_\ve$, consider the function
$$\eta_i= \frac{\vphi_i\,\psi}{\|\psi\|_\infty + \ell(Q_i)\,\|\nabla_x \psi\|_\infty +
\ell(Q_i)^2\,\|\partial_t \psi\|_\infty + \ell(Q_i)^2\,\|\Delta \psi\|_\infty}.$$
We claim that $c\,\eta_i$ is admissible for $2Q_i$, for some absolute constant $c>0$. To check this,
just note that $\vphi_i\,\psi$ is supported on $2Q_i$ and satisfies
$$\|\nabla_x (\vphi_i\,\psi)\|_\infty \leq \|\nabla_x \vphi_i\|_\infty\,\|\psi\|_\infty
+ \| \vphi_i\|_\infty\,\|\nabla_x\psi\|_\infty \lesssim \frac1{\ell(Q_i)}\,\|\psi\|_\infty
+ \|\nabla_x\psi\|_\infty.$$
Hence, 
$$\|\nabla_x \eta_i\|_\infty \lesssim \frac1{\ell(Q_i)}.$$
Analogously,
$$\|\partial_t (\vphi_i\,\psi)\|_\infty \leq \|\partial_t \vphi_i\|_\infty\,\|\psi\|_\infty
+ \| \vphi_i\|_\infty\,\|\partial_t\psi\|_\infty \lesssim \frac1{\ell(Q_i)^2}\,\|\psi\|_\infty
+ \|\partial_t\psi\|_\infty,$$
and so
$$\|\partial_t \eta_i\|_\infty \lesssim \frac1{\ell(Q_i)^2}.$$
Also,
\begin{align*}
\|\Delta (\vphi_i\,\psi)\|_\infty & \leq \|\Delta \vphi_i\|_\infty\,\|\psi\|_\infty
 + 2 \,\|\nabla_x \vphi_i\|_\infty\,\|\nabla_x\psi\|_\infty + \| \vphi_i\|_\infty\,\|\Delta\psi\|_\infty\\
&\lesssim \frac1{\ell(Q_i)^2}\,\|\psi\|_\infty + \frac1{\ell(Q_i)}\,\|\nabla_x\psi\|_\infty 
+ \|\Delta\psi\|_\infty,
\end{align*}
and thus
$$\|\Delta \eta_i\|_\infty \lesssim \frac1{\ell(Q_i)^2}.$$
So the claim above holds and, consequently, by the assumptions in the lemma,
$$|\langle \nu,\eta_i\rangle|\lesssim \ell(Q_i)^{n+1}.$$

From the preceding estimate, we deduce that
$$|\langle \nu,\psi\rangle|\leq \sum_{i\in I_\ve} |\langle \nu,\vphi_i\psi\rangle|
\lesssim \sum_{i\in I_\ve} \ell(Q_i)^{n+1} \,\big(
\|\psi\|_\infty + \ell(Q_i)\,\|\nabla_x \psi\|_\infty +
\ell(Q_i)^2\,\|\partial_t \psi\|_\infty + \ell(Q_i)^2\,\|\Delta \psi\|_\infty\big).$$
Since $\ell(Q_i)\leq\ve$ for each $i$, we infer that
\begin{align*}
|\langle \nu,\psi\rangle| &
\lesssim \sum_{i\in I_\ve} \ell(Q_i)^{n+1} \,\big(
\|\psi\|_\infty + \ve\,\|\nabla_x \psi\|_\infty +
\ve^2\,\|\partial_t \psi\|_\infty + \ve^2\,\|\Delta \psi\|_\infty\big)\\
&\lesssim \big(\HH^{n+1}_p(E) + \ve\big)\,
\big(
\|\psi\|_\infty + \ve\,\|\nabla_x \psi\|_\infty +
\ve^2\,\|\partial_t \psi\|_\infty + \ve^2\,\|\Delta \psi\|_\infty\big).
\end{align*}
Letting $\ve\to0$, we get
$$|\langle \nu,\psi\rangle|\lesssim \HH^{n+1}_p(E)\,\|\psi\|_\infty,$$
which gives \rf{eq001} and proves that $\nu$ is a finite signed measure, as wished.

It remains to show that
there exists some Borel function $f:E\to\R$ such that $\nu=f\,\HH^{n+1}_p|_E$, with 
$\|f\|_{L^\infty(\HH^{n+1}_p|_E)}\lesssim1$.
To this end, let $g$ be the density of $\nu$ with respect to its variation $|\nu|$, so that
$\nu = g\,|\nu|$ with $g(\bar x)=\pm1$ for $|\nu|$-a.e.\ $\bar x\in\R^{n+1}$.
We will show that
\begin{equation}\label{eqdens92}
\limsup_{r\to0}\frac{|\nu|(B_p(\bar x,r))}{r^{n+1}}\lesssim1 \quad\mbox{ for $|\nu|$-a.e.\ $\bar x\in\R^{n+1}$}.
\end{equation}
This implies that $|\nu|=\wt f\,\HH^{n+1}_p|_E$ for some non-negative function $\wt f\lesssim1$. This fact is
well known  if one replaces parabolic balls by Euclidean balls and parabolic Hausdorff measure
by the usual Hausdorff measure (see Theorem 6.9 from \cite{Mattila-gmt}). The arguments extend easily 
to the parabolic case thanks to the validity of the Besicovitch covering theorem with respect to  parabolic balls.

So to complete the proof of the lemma it suffices to show \rf{eqdens92} (since then we will have
$\nu= g\,\wt f\,\HH^{n+1}_p|_E$ with $|g\wt f|\lesssim1$). Notice that, by the Lebesgue differentiation theorem, 
$$\lim_{r\to 0}\frac1{|\nu|(B_p(\bar x,r))} \int_{B_p(\bar x,r)}|g(\bar y)-g(\bar x)|\,d|\nu|(\bar y) =0
\quad\mbox{ for $|\nu|$-a.e.\ $\bar x\in\R^{n+1}$}$$
(because of the validity of the Besicovitch covering theorem with respect to the parabolic balls again).
Let $\bar x\in E$ be a Lebesgue point for $|\nu|$ with $|g(\bar x)|=1$,  let $\ve>0$ to be chosen below,
 and let $r_0>0$ be small enough such that, for $0<r\leq r_0$,
$$\frac1{|\nu|(B_p(\bar x,r))} \int_{B_p(\bar x,r)}|g(\bar y)-g(\bar x)|\,d|\nu|(\bar y)<\ve.$$
Suppose first that
\begin{equation}\label{eqdoub*}
|\nu|(B_p(\bar x,2r))\leq 2^{n+3} \,|\nu|(B_p(\bar x,r)),
\end{equation}
and let $\vphi_{\bar x,r}$ be some non-negative $C^\infty$ function supported on $B_p(\bar x,2r)$ which equals $1$ on 
$B_p(\bar x,r)$ such that $c\,\vphi_{\bar x,r}$ is admissible for the smallest parabolic cube $Q$ containing $B_p(\bar x,2r)$,
so that
$$\left| \int \vphi_{\bar x,r}\,d\nu\right|\lesssim r^{n+1}.$$
Now observe that
\begin{align*}
\left|\int \vphi_{\bar x,r}\,d\nu - g(\bar x)\!\int \vphi_{\bar x,r}\,d|\nu| \right|
& = \left|\int \vphi_{\bar x,r}(\bar y) (g(\bar y)  - g(\bar x))\,d|\nu|(\bar y) \right| \lesssim \int_{B_p(\bar x,2r)} |g(\bar y)  - g(\bar x)|\,d|\nu|(\bar y) \\
& \leq \ve\,|\nu|(B_p(\bar x,2r))\leq \ve\,2^{n+3}\,|\nu|(B_p(\bar x,r)) \lesssim \ve \int \vphi_{\bar x,r}\,d\nu.
\end{align*}
Thus, if $\ve$ is chosen small enough, we deduce that
$$ \int \vphi_{\bar x,r}\,d|\nu| = |g(\bar x)|\int \vphi_{\bar x,r}\,d|\nu| \leq 2
\left|\int \vphi_{\bar x,r}\,d\nu\right|\lesssim r^{n+1}.$$
Therefore, using again that $\vphi_{\bar x,r}=1$ on $B_p(\bar x,r)$, we get
\begin{equation}\label{eqgro956}
|\nu|(B_p(\bar x,r))\lesssim r^{n+1}.
\end{equation}

To get rid of the doubling assumption \rf{eqdoub*}, notice that for $|\nu|$-a.e.\ $\bar x\in\R^{n+1}$ there
exists a sequence of balls $B_p(\bar x,r_k)$, with $r_k\to0$, satisfying \rf{eqdoub*} (we say that the balls
$B_p(\bar x,r_k)$ are $|\nu|$-doubling). Further, we may assume
that $r_k=2^{h_k}$, for some $h_k\in\N$. The proof of this fact is analogous to the one of Lemma 2.8
in \cite{Tolsa-llibre}. So for such a point $\bar x$, by the arguments above, we know that there exists
some $k_0>0$ such that
$$|\nu|(B_p(\bar x,r_k))\lesssim r_k^{n+1}\quad\mbox{ for $k\geq k_0$,}$$
assuming also that $\bar x$ is a $|\nu|$-Lebesgue point for the density $g$. Given an arbitrary $r\in (0,r_{k_0})$,
let $j$ be the smallest integer $r\leq 2^j$, and let $2^k$ be the smallest $j\geq k$ such that
the ball $B_p(\bar x,2^k)$ is $|\nu|$-doubling (i.e., \rf{eqdoub*} holds for this ball). Observe that
$2^k\leq r_{k_0}$.
Then, taking into account that the balls $B_p(\bar x,2^h)$ are non-doubling for $k\leq h<j$ and applying \rf{eqgro956} for
$r=2^k$, we obtain
$$|\nu|(B_p(\bar x,r))\leq |\nu|(B_p(\bar x,2^j)) \leq 2^{(n+3)(j-k)}\,|\nu|(B_p(\bar x,2^k)) \lesssim 2^{(n+3)(j-k)}\,2^{k(n+1)}
\leq 2^{j(n+1)}\approx r^{n+1}.$$
Hence, \rf{eqdens92} holds and we are done.
\end{proof}

\vvv

Next we will construct a self-similar Cantor set $E\subset \R^3$ with positive and finite measure $\HH^3_p$ and we will show that it is removable. For simplicity we work in $\R^3$, although this construction extends easily
to $\R^{n+1}$, with $n\geq 1$ arbitrary. Our example is inspired by the typical planar $1/4$ Cantor
set in the setting of analytic capacity (see \cite{Garnett} or \cite[p.\ 35]{Tolsa-llibre}, for example).

We construct the Cantor set $E$ as follows:
We let $E_0=Q^0=[0,1]^3$ (i.e., $Q_0$ is the unit cube). Next we replace $Q^0$ by $12$ disjoint closed parabolic cubes $Q^1_i$ with side length $12^{-1/3}$ located in the following positions: they are all contained in $Q^0$ and eight of them contain each one a vertex of $Q^0$. The centers of the remaining other four cubes $Q^1_i$ are in
the plane $\{(x_1,x_2,t):\textcolor{red}{t=1/2}\}$ and each one of these cubes has one of its vertical edges contained in one of the vertical edges of $Q^0$. In this way, the vertical projection of the set $E_1=\bigcup_{i=1}^{12}Q_i^1$
consists of $4$ squares, and the two horizontal projections parallel to the horizontal axes consist of $6$ Euclidean rectangles each one. 

We proceed inductively:
In each generation $k$, we replace each parabolic cube $Q_j^{k-1}$ of the previous generation by $12$ parabolic cubes $Q_i^{k}$ with side length $12^{-k/3}$ which are contained in $Q_j^{k-1}$ and located in the same relative position to $Q_j^{k-1}$ as the cubes $Q_1^1,\ldots,Q^1_{12}$ with respect to $Q_0$. 

Notice that in each generation $k$ there are $12^k$ closed parabolic cubes with side length $12^{-k/3}$. 
We denote by $E_k$ the union of all these parabolic cubes from the $k$-th generation. By construction, $E_k\subset E_{k-1}$.
We let 
\begin{equation}\label{defE}
E=\bigcap_{k=0}^\infty E_k.
\end{equation}
It is easy to check that 
$\dist_p(Q_i^k,Q_h^k)\gtrsim 12^{-k/3}$ for $i\neq h$, and if $Q_i^k$, $Q_h^k$ are contained in the same parabolic cube $Q^{k-1}_j$, then $\dist_p(Q_i^k,Q_h^k)\lesssim 12^{-k/3}$.
Taking into account that, for each $k\geq0$,
$$\sum_{i=0}^{12^k} \ell(Q_i^k)^3 = 12^k\cdot (12^{-k/3})^3=1,$$
by standard arguments it follows that
$$0<\HH^3_p(E)<\infty.$$
Further, $\HH^3_p|_E$ coincides, modulo a constant factor, with the probability measure $\mu$ supported on $E$ which gives the same measure to all the cubes $Q_i^k$ of the same generation $k$ (i.e., $\mu(Q_i^k)=12^{-k}$).

\vv

\begin{theorem}
The Cantor set $E$ defined in \rf{defE} is Lipschitz caloric removable.
\end{theorem}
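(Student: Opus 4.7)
By Theorem \ref{teoremov}, it suffices to show $\gamma_\Theta(E)=0$. Let $\mu=\HH^3_p|_E$. The self-similar construction forces $\mu$ (up to an absolute constant) to coincide with the natural probability measure that assigns mass $12^{-k}$ to each parabolic cube $Q^k_i$ of generation $k$; in particular $\mu$ has upper parabolic growth of degree $3$. Since $\HH^3_p(E)<\infty$, Theorem \ref{teomeas} reduces the task to proving that every Borel function $f$ with $\|f\|_{L^\infty(\mu)}\lesssim 1$ for which $\nu=f\mu$ is admissible in the definition of $\gamma_\Theta$ satisfies $\int f\,d\mu=0$.

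The first step is symmetrization. The set $E$ and the measure $\mu$ are invariant under the finite group $G$ generated by the parabolic isometries $\sigma_j:x_j\mapsto 1-x_j$ ($j=1,2$) and $\sigma_t:t\mapsto 1-t$. Averaging $f$ over $G$ gives a $G$-invariant density $\wt f$ with the same mean $\int \wt f\,d\mu=\int f\,d\mu$, with comparable $L^\infty$-norm, and whose associated distribution $\wt\nu=\wt f\mu$ still satisfies the admissibility bounds $\|\nabla_x W*\wt\nu\|_\infty\lesssim 1$ and $\|\partial_t^{1/2}W*\wt\nu\|_{*,p}\lesssim 1$ (the constants grow by at most the factor $|G|$, which is harmless for proving vanishing). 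So I may assume $f$ is $G$-invariant.

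Next, exploit self-similarity. Let $\phi_1,\ldots,\phi_{12}$ be the parabolic similarities of ratio $\lambda=12^{-1/3}$ sending $Q^0$ onto the twelve subcubes $Q^1_i$, so that $E=\bigcup_i\phi_i(E)$ and $\mu=\frac{1}{12}\sum_i(\phi_i)_*\mu$. Decompose $\nu=\sum_{i=1}^{12}\nu_i$ with $\nu_i=f\,\mu|_{\phi_i(E)}$. Pulling back $\nu_i$ by $\phi_i$ and applying the parabolic scaling $\gamma_\Theta(\delta_\lambda(\cdot))=\lambda^{3}\gamma_\Theta(\cdot)$, each piece corresponds to an admissible distribution on $E$ of $L^\infty$-density $\lesssim 1$; Theorem \ref{teoloc} allows us to localize the admissibility bounds to each $Q^1_i$ up to an absolute constant. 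Iterating $k$ generations, one encodes $\nu$ by the tree of masses $\alpha_Q=\int_Q f\,d\mu$ indexed by the construction cubes $Q$, and the two admissibility conditions provide pointwise/BMO constraints on the resulting potentials at every scale.

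The core quantitative step is a pointwise computation converting the $G$-symmetry and the tree decomposition into vanishing of $\alpha:=\int f\,d\mu$. Fix a base point $\bar x_0$ outside $E$ that is invariant under $\sigma_1$ and $\sigma_2$, e.g.\ $\bar x_0=(\tfrac12,\tfrac12,1+\varepsilon)$ for small $\varepsilon>0$. Since $\partial_{x_j}W$ is odd in $x_j$ and $f$ is $G$-invariant, the horizontal components of $\nabla_x W*\nu(\bar x_0)$ vanish identically. The surviving information must come from comparing $\partial_t^{1/2}W*\nu$ at two $\sigma_t$-symmetric points: one sums, generation by generation, the contribution of the eight corner cubes against the four middle-time-slice cubes, and checks that the \emph{only} configuration compatible with both $\|\nabla_x W*\nu\|_\infty\leq 1$ and $\|\partial_t^{1/2}W*\nu\|_{*,p}\leq 1$ is $\alpha_Q=0$ for every construction cube, yielding $\alpha=0$ and so $\gamma_\Theta(E)=0$.

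The main obstacle will be the lack of antisymmetry of $\nabla_x W$: the Heaviside factor $\chi_{\{t>0\}}$ introduces a strict past/future asymmetry that prevents a direct transposition of the Cauchy- or Riesz-kernel arguments used for the planar $1/4$ Cantor set in analytic capacity theory or in its Lipschitz harmonic analogue. The cancellation must therefore be extracted from the \emph{joint} use of the two admissibility conditions, coupling $\nabla_x W*\nu\in L^\infty$ with $\partial_t^{1/2}W*\nu\in\mathrm{BMO}_p$ (and the consequent Lip$(1/2)$ in $t$ of $W*\nu$ from the remark before Lemma \ref{lemlocnabla}). It is precisely for producing this per-scale cancellation that the specific placement of the four extra subcubes in the middle time slice $\{t=1/2\}$ has been built into the construction, since it breaks the trivial product structure and forces the competing temporal contributions to interact at every generation.
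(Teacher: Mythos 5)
Your proposal correctly identifies the first two ingredients (Theorems \ref{teoremov} and \ref{teomeas}, which reduce the problem to showing that no bounded density $f$ on $E$ with $\nu=f\,\mu$ admissible can have $\int f\,d\mu\neq 0$), but the actual contradiction argument is both different from the paper's and, as written, has genuine gaps.

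The most serious gap is the symmetrization under the time reflection $\sigma_t: t\mapsto 1-t$. While $\mu$ and $E$ are invariant under $\sigma_t$, the heat kernel $W$ is \emph{not}: $W$ is supported in $\{t>0\}$, so replacing $\nu$ by its pushforward $(\sigma_t)_*\nu$ converts the caloric potential $W*\nu$ into the \emph{adjoint} caloric potential $W^*\!*\nu$ (with $W^*(x,t)=W(x,-t)$) evaluated at the reflected point. There is no reason why $\|\nabla_x W*((\sigma_t)_*\nu)\|_\infty\lesssim 1$ should follow from $\|\nabla_x W*\nu\|_\infty\lesssim 1$, and indeed the paper explicitly flags (in the introduction) the open question whether removability for $\Theta$ and for the adjoint $\Theta^*$ coincide. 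So you may symmetrize over $\sigma_1,\sigma_2$, but not over $\sigma_t$, and the residual asymmetry is exactly what the argument must grapple with. Making $\nabla_x W*\nu(\bar x_0)$ vanish at the $\sigma_1,\sigma_2$-fixed axis by oddness also does not help: it only shows the $L^\infty$ constraint is trivially satisfied there, which gives no contradiction.

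The second gap is that the core quantitative step --- that the joint constraints ``force $\alpha_Q=0$ for every construction cube'' --- is asserted, not proved. That is where the entire difficulty lives, and no mechanism is proposed for extracting a per-scale cancellation. The paper's actual route is quite different and avoids symmetrization altogether: it works with the maximal truncated operator $\wt T_*\nu(\bar x)=\sup_k|T(\chi_{\R^3\setminus Q^k_{\bar x}}\nu)(\bar x)|$, which is uniformly bounded by a Cotlar-type argument; it then picks a Lebesgue point $\bar x_0$ of $f$ with $f(\bar x_0)>0$ and a construction cube $Q^k_i$ on which $f$ is nearly constant, and evaluates the first component $K_1=\partial_{x_1}W$ at an \emph{extremal corner} $\bar z$ of $Q^k_i$ (minimal $x_1$, maximal $t$). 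The key observation is that $K_1(\bar z-\bar y)\ge 0$ for all $\bar y$ in the annuli $Q^h_{\bar z}\setminus Q^{h+1}_{\bar z}$, so the contributions from successive generations add with the same sign and the truncated potential grows like $m f(\bar x_0)$ over $m$ generations, contradicting boundedness. This one-sidedness at a corner is the mechanism you are missing; it replaces the hoped-for ``forced cancellation'' by a ``forced divergence,'' and it is robust precisely because it does not require any symmetry of the kernel.
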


\begin{proof}
We will suppose that $E$ is not removable and we will reach a contradiction. 
By Theorem \ref{teoremov}, there exists a distribution $\nu$ supported on $E$ such that 
$|\langle\nu,1\rangle|>0$ and
$$
\|\nabla_x W*\nu\|_{L^\infty(\R^{n+1})} \leq 1,\qquad \|\partial_t^{1/2} W*\nu\|_{*,p} \leq 1.
$$
By Theorem \ref{teomeas}, $\nu$ is a signed measure of the form
$$\nu=f\,\mu,\qquad \|f\|_{L^\infty(\mu)}\lesssim 1,$$
where $\mu$ is the probability measure supported on $E$ such that
$\mu(Q_i^k)=12^{-k}$ for all $i,k$.
It is easy to check that $\mu$ (and thus $|\nu|$) has upper parabolic growth of degree $3$. Then, 
arguing as in \cite[Lemma 5.4]{Mattila-Paramonov}, it follows that there exists some constant $K$ such that
\begin{equation}\label{eqmatpar}
T_*\nu(\bar x)\leq K\,\quad \mbox{ for all $\bar x\in\R^{n+1}$.}
\end{equation}

For $\bar x\in E$, we denote by $Q_{\bar x}^k$ the cube $Q_i^k$ that contains $\bar x$. Then we consider the auxiliary operator
$$\wt T_*\nu(\bar x) = \sup_{k\geq0} |T (\chi_{\R^3\setminus Q_{\bar x}^k}\nu)(\bar x)|.$$
By the separation condition between the cubes $Q_i^k$, the upper parabolic growth of $|\nu|$, and the condition \rf{eqmatpar}, it follows easily that
\begin{equation}\label{eqmatpar'}
\wt T_*\nu(\bar x)\leq K'\,\quad \mbox{ for all $\bar x\in E$,}
\end{equation}
for some fixed constant $K'$.

We will contradict the last estimate. To this end, consider a Lebesgue point $\bar x_0\in E$ (with respect to $\mu$ and to parabolic cubes) of the density $f=\frac{d\nu}{d\mu}$ such that $f(\bar x_0)>0$. The existence of this point is
guarantied by the fact that $\nu(E)>0$.
Given $\ve>0$ small enough to be chosen below, consider a parabolic cube $Q_i^k$ containing $\bar x_0$ such that
$$\frac1{\mu(Q_i^k)} \int_{Q_i^k} |f(\bar y)- f(\bar x_0)|\,d\mu(y)\leq \ve.$$
Given $m\gg1$, to be fixed below too, it is easy to check that if $\ve$ is chosen small enough (depending on $m$ and on $f(\bar x_0)$), then
the above condition ensures that
every cube $Q_j^h$ contained in $Q_i^k$ such that $k\leq h\leq k+m$ satisfies
\begin{equation}\label{eqmatp*}
\frac12 f(\bar x_0)\,\mu(Q^h_j)\leq
\nu(Q^h_j)\leq 2 f(\bar x_0)\,\mu(Q^h_j).
\end{equation}
Notice also that, writing $\nu=\nu^+-\nu^-$, since $f(\bar x_0)>0$,
$$\nu^-(Q_i^k) = \int_{Q_i^k} f^-(\bar y)\,d\mu(\bar y) \leq \int_{Q_i^k} |f(\bar y)-f(\bar x_0)|\,d\mu(\bar y) \leq \ve\,\mu(Q_i^k).$$

Let $\bar z= (z_1,z_2,,u)$ be one of the two upper leftmost corners of $Q^k_i$ (i.e., with $z_1$ minimal 
and $u$ maximal in $Q^k_i$). Since
$|T(\chi_{Q^k_{\bar z}\setminus Q^{k+m}_{\bar z}}\nu(\bar z)| \leq 2\,\wt T_*\nu(\bar z)$, we have
$$\wt T_*\nu(\bar z) \geq \frac12\,|T(\chi_{Q^k_{\bar z}\setminus Q^{k+m}_{\bar z}}\nu)(\bar z)| \geq 
\frac12\,|T(\chi_{Q^k_{\bar z}\setminus Q^{k+m}_{\bar z}}\nu^+)(\bar z)| - \frac12\,|T(\chi_{Q^k_{\bar z}\setminus Q^{k+m}_{\bar z}}\nu^-)(\bar z)|.
$$
Using the fact that $\dist_p(z,\,Q^k_{\bar z}\setminus Q^{k+m}_{\bar z}) \gtrsim \ell(Q^{k+m}_{\bar z})$, we get
$$|T(\chi_{Q^k_{\bar z}\setminus Q^{k+m}_{\bar z}}\nu^-)(\bar z)|\lesssim \frac{\nu^-(Q^k_i)}{\ell(Q^{k+m}_{\bar z})^3} 
\leq \ve\,\frac{\mu(Q^k_i)}{\ell(Q^{k+m}_{\bar z})^3} = \ve\,\frac{\mu(Q^k_i)}{12^{-m}\ell(Q^{k}_i)^3} 
\lesssim 12^{m}\,\ve.$$

To estimate $|T(\chi_{Q^k_{\bar z}\setminus Q^{k+m}_{\bar z}}\nu^+)(\bar z)|$ from below, recall that the first component of the
kernel $K=\nabla_x W$ equals
$$K_1(\bar x) = c_0\,\frac{-x_1}{t^{2}}\,e^{-|x|^2/(4t)}\,\chi_{\{t>0\}},$$
for some absolute constant $c_0>0$.
Then, by the choice of $\bar z$, it follows that
\begin{equation}\label{eqpos1}
K_1(\bar z- \bar y) \geq 0\quad \mbox{ for all $\bar y\in Q^k_{\bar z}\setminus Q^{k+m}_{\bar z}$.}
\end{equation}
We write
$$|T(\chi_{Q^k_{\bar z}\setminus Q^{k+m}_{\bar z}}\nu^+)(\bar z)| \geq \int_{Q^k_{\bar z}\setminus Q^{k+m}_{\bar z}} K_1(\bar z- \bar y)\,
d\nu^+(\bar y) = \sum_{h=k}^{k+m-1} \int_{Q^h_{\bar z}\setminus Q^{h+1}_{\bar z}} K_1(\bar z- \bar y)\,
d\nu^+(\bar y).$$
Taking into account \rf{eqpos1} and the fact that, for $k\leq h\leq k+m-1$, $Q^h_{\bar z}\setminus Q^{h+1}_{\bar z}$ contains a cube $Q^{h+1}_j$ such that for all $\bar y=(y_1,y_2,s)$,
$$0<y_1-z_1 \approx |\bar y-\bar z|\approx\ell(Q^{h+1}_j),\qquad 0<u-s \approx \ell(Q^{h+1}_j)^2,$$
using also \rf{eqmatp*},
we deduce
$$\int_{Q^h_{\bar z}\setminus Q^{h+1}_{\bar z}} K_1(\bar z- \bar y)\,
d\nu^+(\bar y) \gtrsim \frac{\nu^+(Q^{h+1}_j)}{\ell(Q^{h+1}_j)^3}\gtrsim f(\bar x_0)\,\frac{\mu(Q^{h+1}_j)}{\ell(Q^{h+1}_j)^3}
= f(\bar x_0),$$
Thus,
$$|T(\chi_{Q^k_{\bar z}\setminus Q^{k+m}_{\bar z}}\nu^+)(\bar z)| \gtrsim (m-1)\,f(\bar x_0).$$
Together with the previous estimate for $|T(\chi_{Q^k_{\bar z}\setminus Q^{k+m}_{\bar z}}\nu^-)(\bar z)|$, this tells us that
$$\wt T_*\nu(\bar z) \gtrsim (m-1)\,f(x_0) - C\,12^{m}\,\ve,$$
for some fixed $C>0$.
It is clear that if we choose $m$ big enough and then $\ve$ small enough, depending on $m$, this
lower bound contradicts \rf{eqmatpar'}, as wished.
\end{proof}


\vvv

\end{document}